\documentclass[a4paper,12pt,reqno]{amsart}
\usepackage{mymacros}
\addbibresource{bibs.bib}

\usepackage[
    inline,
    final
    ]{showlabels}

%
%

\newcommand{\co}{\mathop{\text{co}}}
\newcommand{\qcoh}{\operatorname{qcoh}}
\newcommand{\rad}{\operatorname{rad}}
\newcommand{\characteristic}{\operatorname{char}}
\newcommand{\Comodule}{\mathop{\text{Comod}}}
\newcommand{\representations}{\operatorname{rep}}

\DeclareMathOperator*\medoplus{\mathchoice
  {\textstyle\bigoplus}
  {\textstyle\bigoplus}
  {\scriptstyle\bigoplus}
  {\scriptscriptstyle\bigoplus}
}

%
%

\title{On stacky surfaces and noncommutative surfaces}

\author{Eleonore Faber}
\thanks{Part of the work by EF was carried out at the Isaac Newton Institute in Cambridge, supported by EPSRC grant EP/R014604/1.}
\address{School of Mathematics, University of Leeds, LS2 9JT Leeds, United Kingdom}
\email{E.M.Faber@leeds.ac.uk}

\author{Colin Ingalls}
\thanks{CI was partially supported by a Discovery Grant from the
  National Science and Engineering Research Council of Canada}
\address{School of Mathematics and Statistics, Carleton University, Ottawa, ON K1S 5B6, Canada}
\email{cingalls@math.carleton.ca}

\author{Shinnosuke Okawa}
\thanks{SO was partially supported by JSPS Grants-in-Aid for Scientific Research
(18H01120,
19KK0348,
20H01797,
20H01794,
21H04994).}
\address{Department of Mathematics, Graduate School of Science, Osaka University,  Machikaneyama 1-1, Toyonaka, Osaka 560-0043, Japan}
\email{okawa@math.sci.osaka-u.ac.jp}

\author{Matthew Satriano}
\thanks{MS was partially supported by a Discovery Grant from the
  National Science and Engineering Research Council of Canada and a Mathematics Faculty Research Chair.}
\address{Department of Pure Mathematics, University
  of Waterloo, Canada}
\email{msatrian@uwaterloo.ca}

\date{}

%
%

\begin{document}

\begin{abstract}
    Let \( \bfk \) be an algebraically closed field of characteristic \( \geq 7 \) or zero.
    Let \( \cA \) be a tame order of global dimension \( 2 \) over a normal surface \( X \) over \( \bfk \) such that
    \(
        \zentrum (\cA)
        =
        \cO_X
    \)
    is locally a direct summand of \( \cA \).  We prove that there is a \( \mu_N \)-gerbe \( \cX \) over a smooth tame algebraic stack whose generic stabilizer is trivial, with coarse space \(X \) such that the category of 1-twisted coherent sheaves on \( \cX \) is equivalent 
    to the category of coherent sheaves of modules on \( \cA \). Moreover, the stack \( \cX \) is constructed explicitly through a sequence of root stacks, canonical stacks, and gerbes.  This extends results of Reiten and Van den Bergh to finite characteristic and the global situation. As applications, in characteristic \(0\) we prove that such orders are geometric noncommutative schemes in the sense of Orlov, and we study relations with Hochschild cohomology and Connes' convolution algebra.
\end{abstract}

\maketitle

\setcounter{tocdepth}{1}

\tableofcontents

%
%
\section{Introduction}

Noncommutative algebraic geometry is a burgeoning field connecting noncommutative algebra, representation theory, derived categories, and algebraic geometry.
Understanding the extent to which noncommutative objects are determined by commutative ones is of central interest and is a theme which underlies the noncommutative McKay correspondence. In particular, we are interested in understanding when given orders $\cA$ over schemes $X$, we can find stacks $\cX$ with \( \Module \cA \simeq \qcoh \cX. \)

This question can be modified by asking for \( \Module \cA \) to be a component of the category \( \qcoh \cX \),
or a semi-orthogonal component of the derived category \( \derived ^{ \bounded } \coh \cX \).  The following two examples are well known instances of this situation.

\begin{example}
    Let \( G \) be a finite group and \(R\) a commutative ring with \( G \)-action.
    Let \(R \ast G\) be the skew group ring and
    \(
        \left[
            \Spec R / G
        \right]
    \)
    be the quotient stack.  It follows immediately from the definitions that we have the following equivalence of categories
    \begin{align}
        \Module(R \ast G)
        \simeq
        \mbox{(\( G \)-equivariant \(R\)-modules)}
        \simeq
        \qcoh
        \left[
            \Spec R / G
        \right].
    \end{align}
\end{example}

\begin{example}[cf.~\cref{corollary:orthogonal decomposition}]
    Let \( \cA \) be an Azumaya algebra of order \(N\) on a scheme \( X \). Then we can associate a \( \mu_N \)-gerbe \( \cX \) to \( \cA \) such that the following equivalences of categories hold. Here, \( \qcoh^{(i)} \cX \) denotes the category of \( i \)-twisted quasi-coherent sheaves on \( \cX \).
    \begin{align}
        \module \cA^{ \otimes i} \simeq \qcoh^{(i)} \cX
    \end{align}
    \begin{align}
        \bigoplus_{i=0}^{N-1}
        \module \cA^{\otimes i}
        \simeq
        \module
        \prod_{i=0}^{N-1}
        \cA^{\otimes i}\simeq \qcoh \cX     
    \end{align}
\end{example}

Inspired by work of Connes~\cite{MR1303779}, Chan and the second author~\cite{MR2018958} showed that to every stack \( \cX \) with a choice of finite flat presentation, there is an associated noncommutative coordinate ring (the convolution algebra); this ring may be thought of as a noncommutative scheme. Furthermore, they showed that in the case of noncommutative \emph{curves}, their construction is reversible:~every noncommutative curve is the noncommutative coordinate ring of a smooth Deligne--Mumford curve~\cite[Theorem~7.7]{MR2018958}.
Finding a groupoid whose convolution algebra is a given algebra has been studied in the setting of \(C^{ \ast }\)-algebras, see for example~\cite{MR854149,MR3976582,MR4166588}.

It is desirable to have a converse to~\cite{MR2018958} beyond the curve case since it allows one to import (stacky) geometric techniques in the study of noncommutative rings. It is even further desirable to ensure that in such a converse construction, the stack is Morita equivalent to the module category of the noncommutative ring.  We say that an additive category \( \cC \) is a component of
the additive category
\( \cD \) if there is an additive category \( \cE \) such that
\( \cD \) is equivalent to the direct sum
\(\cD \simeq \cC \oplus \cE \).
The following questions are of interest:

\begin{question}
    Let \( \cA \) be a sheaf of algebras over a scheme \( S \).
    \begin{itemize}
    \item When does there exist a stack \( \cX \) such that \( \module \cA \) is a component of
    \(\qcoh \cX \)?

    \item When does there exist a stack \( \cX \) with coarse moduli space
    \(
        \pi \colon \cX \to S
    \)
    with a vector bundle \( V \) on \( \cX \) such that
    \(
        \cA
        \simeq
        \pi _{ \ast }
        \cEnd
        (
            V
        )
    \)?
  \end{itemize}
\end{question}

We achieve both of these goals in the case of noncommutative surfaces. Our result is much more involved than the case of noncommutative curves due to the presence of singularities. In this paper we consider the following class of noncommutative surfaces (see~\cref{situation:local} for its local version).

\begin{definition}\label{definition:nc-surface}
Let \( \bfk \) be an algebraically closed field of characteristic \( 0 \) or \( p \ge 7\).
We say the pair \((X,\cA)\) is a \emph{smooth tame split order of dimension \(2\)} if the following hold.

\begin{itemize}
    \item 
    \(X \) is a normal, connected, and quasi-projective surface over \( \bfk \).

    \item
    \( \cA \) is a split tame order on \( X \) of \( \gldim \cA = 2 \); this implies \( \cA \) is a reflexive coherent sheaf on \(X \). Here, split means that
    \(
       \cO _{ X }
    \)
    is locally a direct summand of
    \(
       \cA
    \)
    as an
    \(
        \cO _{ X }
    \)-submodule, which holds for example if \( \characteristic ( \bfk ) \nmid \rank \cA \). See~\cref{definition:tame order} for the definition of tameness.

    \item
    \( \zentrum ( \cA ) = \cO _{ X } \).
\end{itemize}
\end{definition}

The theorem below is our main result and is a combination of~\cref{theorem:equivalence of cA and cAr and cAc} and the main result of~\cref{section:gerbe}.

\begin{theorem}\label{theorem:main} 
Let
\(
   \bfk
\)
and the pair
\(
   \left(
    X,
    \cA
   \right)
\)
be as in~\cref{definition:nc-surface}. Then there exists a smooth tame algebraic stack \( \cX \) with coarse space \( X \) and an equivalence of
\(
   \bfk
\)-linear categories
\begin{align}\label{equation:Morita equivalence}
    \module \cA  \simeq \coh^{ ( 1 )} \cX
\end{align}
between the category of coherent right \( \cA \)-modules and \( 1 \)-twisted coherent sheaves on \( \cX \). 
Furthermore, if \(\cA\otimes \bfk(X)\) is a matrix ring over the quotient field \( \bfk ( X ) \) of \( X \), then \( \cX \) is generically a scheme and
\(
    \module \cA  \simeq \coh\cX
\).
\end{theorem}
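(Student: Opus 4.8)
\emph{Overview.} The plan is to produce the equivalence~\eqref{equation:Morita equivalence} as a composite of three geometric steps — root stacks, then a canonical (stacky) resolution, then a gerbe — so that \cref{theorem:main} becomes the concatenation of \cref{theorem:equivalence of cA and cAr} (the first step) and \cref{theorem:equivalence of cAr and cAc} (the last two). The starting point is the \'etale-local classification of a smooth tame split order \((X,\cA)\) in codimension \(\le 2\): the ramification divisor \(D=\sum_i D_i\subset X\) has normal crossings; along \(D_i\) the order is cyclically ramified of index \(e_i\) prime to \(\characteristic(\bfk)\) (tameness) with \emph{split} secondary ramification, so that \'etale-locally it is the tiled/skew-group model \(\cO\ast\mu_{e_i}\) (resp.\ \(\cO\ast(\mu_{e_i}\times\mu_{e_j})\) at a node \(D_i\cap D_j\)); at the finitely many remaining points — including the singular points of \(X\), which are forced to be quotient singularities since \(\gldim\cA=2\) — the local structure is governed by the binary polyhedral groups. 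This last case is precisely where the hypothesis \(\characteristic(\bfk)=0\) or \(\ge 7\) enters, keeping all the stabilizer groups that appear linearly reductive, i.e.\ the resulting stacks tame. Finally, \(\cO_X\) being a local direct summand of \(\cA\) (e.g.\ when \(\characteristic(\bfk)\nmid\rank\cA\)) is what upgrades the projective representations that occur to honest ones, so that the gerbes below are \(\mu_N\)-gerbes.

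\emph{Step 1: root stacks, \(\cA\rightsquigarrow\cA_r\).} Let \(\cX_r\) be the iterated root stack of \(X\) along \(D\) with the multiplicities \((e_i)\); it has coarse space \(X\) and trivial generic stabilizer. The key local identity is \(\module(\cO\ast\mu_e)\simeq\coh\bigl[\Spec\cO[t]/(t^e-x)\,/\,\mu_e\bigr]=\coh\,\sqrt[e]{D_i/X}\) together with its two-variable analogue at nodes; globalizing, \(\cA\) extends to a coherent sheaf of \(\cO_{\cX_r}\)-orders \(\cA_r\) which is \emph{unramified in codimension \(1\)}, and pushforward along \(\pi\colon\cX_r\to X\) induces a Morita equivalence \(\module\cA\simeq\module\cA_r\). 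This is \cref{theorem:equivalence of cA and cAr}. Note that \(\cX_r\) need not be smooth: it still carries the quotient singularities of \(X\).

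\emph{Step 2: canonical resolution and gerbe, \(\cA_r\rightsquigarrow\cA_c\rightsquigarrow\cX\).} Let \(p\colon\cX_c\to\cX_r\) be the canonical stack — the minimal smooth tame Deligne--Mumford stack with coarse space \(X\) that is an isomorphism over the smooth locus of \(\cX_r\) and has no generic stabilizers. Let \(\cA_c\) be the reflexive hull of \(p^*\cA_r\). Since \(\cX_c\) is regular and \(\cA_c\) remains unramified in codimension \(1\), purity of ramification shows that \(\cA_c\) is an Azumaya algebra, say of degree \(N\); and a bookkeeping of reflexive hulls along \(p\) (which only modifies the resolved loci) gives a Morita equivalence \(\module\cA_r\simeq\module\cA_c\). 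Associating to the Azumaya algebra \(\cA_c\) its \(\mu_N\)-gerbe of trivializations \(g\colon\cX\to\cX_c\), we have \(\cA_c\simeq g_*\cEnd(V)\) for the tautological \(1\)-twisted locally free sheaf \(V\), hence \(\module\cA_c\simeq\coh^{(1)}\cX\) (the Azumaya case recalled in the introduction). Composing, \(\module\cA\simeq\coh^{(1)}\cX\) with \(\cX\) a \(\mu_N\)-gerbe over the smooth tame stack \(\cX_c\), which has coarse space \(X\) and trivial generic stabilizer. Steps~1--2 together are \cref{theorem:equivalence of cAr and cAc}.

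\emph{The split case, and the main obstacle.} If \(\cA\otimes\bfk(X)\cong M_N(\bfk(X))\) then the Brauer class of \(\cA_c\) is trivial at the generic point; as \(\cX_c\) is regular, \(\operatorname{Br}(\cX_c)\hookrightarrow\operatorname{Br}(\bfk(X))\), so the class vanishes and \(\cA_c\simeq\cEnd(V)\) for an honest vector bundle \(V\) on \(\cX_c\). Taking \(\cX:=\cX_c\) (no gerbe) then gives \(\module\cA\simeq\module\cA_c\simeq\coh\cX\), and \(\cX\) is a smooth tame stack with trivial generic stabilizer and coarse space \(X\) that agrees with \(X\) on the dense open where \(\cA\) is Azumaya — hence is generically a scheme. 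The main difficulty lies in Steps~1--2 themselves: although each local model is classical, one must (i) glue them into global orders \(\cA_r\) on \(\cX_r\) and \(\cA_c\) on \(\cX_c\), which requires tracking the (split) secondary ramification and controlling reflexivization along the non-representable morphisms \(\cX_r\to X\) and \(\cX_c\to\cX_r\), and (ii) check that the local Morita equivalences (skew-group/tiled order \(\leftrightarrow\) root stack, and skew-group ring \(\leftrightarrow\) quotient stack) are canonical enough to glue into global functors. The codimension-\(2\) analysis at the binary polyhedral points is the delicate part, and is exactly what forces \(\characteristic(\bfk)=0\) or \(\ge 7\).
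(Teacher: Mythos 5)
Your outline reproduces the paper's architecture (a root-stack step, a canonical stack, then the gerbe of trivializations of an Azumaya algebra, with the generically split case handled by triviality of the Brauer class), but the core of the theorem is missing and several load-bearing claims are wrong. The heart of the matter is precisely the two equivalences \( \module \cA \simeq \module \cA_r \) and \( \module \cA_r \simeq \module \cA_c \), which you defer as ``the main obstacle'' of gluing local Morita equivalences. The paper avoids any such gluing: \( \cA_r \) and \( \cX_r \) are constructed globally and intrinsically from the inverse dualizing bimodule, \( \cAtilde = \bigoplus_i (\omega_\cA^{-1})^{(i)} \) and \( \cX_r = [\Spec_X \zentrum(\cAtilde)/\bGm] \) (not as an iterated root stack along the ramification divisor, with which it is only compared in codimension one --- at non-normal-crossing points of \(D\), e.g.\ dihedral-type configurations, your root-stack definition is not justified to carry an order Morita equivalent to \( \cA \)). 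The global equivalences are then obtained from the adjoint pairs \( (r^*(-)\otimes\cA_r, r_*) \) and \( (c^*(-)\otimes\cA_c, c_*) \), whose unit and counit are checked to be isomorphisms by reduction to the complete local statements \cref{corollary:Lambda1 vs Lambda} and \cref{corollary:Gamma vs Lambda1}; those in turn rest on the Hopf-algebraic Galois theory computations \( \Lambda_1\ast\cO_{\mu_n}^\vee \simeq \End_\Lambda(\Lambda_1) \) and \( \Gamma\ast H \simeq \End_{\Lambda_1}(\Gamma) \), together with the fact that reflexive modules over a tame order of global dimension \(2\) are projective. None of this appears in your proposal, and ``bookkeeping of reflexive hulls'' does not substitute for it.

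Two further points are genuinely incorrect rather than merely omitted. First, in characteristic \(p\) the tameness hypothesis on the order does \emph{not} force the ramification indices to be prime to \(p\), the local stabilizers are finite \emph{linearly reductive group schemes} (e.g.\ \( \mu_p \)), and the resulting stacks are tame Artin stacks, not Deligne--Mumford; your \'etale-local ``binary polyhedral'' picture and classical skew-group models are a characteristic-zero framework, whereas replacing Galois actions by Hopf-algebra (comodule-algebra) Galois extensions is exactly the new content needed here. Second, you misuse the hypotheses: the assumption that \( \cO_X \) is locally a direct summand of \( \cA \) has nothing to do with lifting projective representations or making the gerbe a \( \mu_N \)-gerbe (that is automatic from the degree-\(N\) Azumaya algebra); in the paper it is used exactly once, in \cref{proposition:R1 is SH}, to show \( R_1=\zentrum(\Lambda_1) \) is a direct summand of \( \Lambda_1 \), hence of finite representation type, hence \( \bQ \)-Gorenstein with canonical index-one cover, and then (using \( \characteristic(\bfk)=0 \) or \( \ge 7 \)) a linearly reductive quotient singularity. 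Without that argument your Step~2 has no foundation: you invoke the canonical stack of \( \cX_r \) and its smoothness and tameness without establishing that \( \cX_r \) has linearly reductive quotient singularities, which is the very point where the direct-summand and characteristic hypotheses enter. (By contrast, your treatment of the gerbe step and of the generically split case, and the observation that a reflexive order on the smooth surface stack \( \cX_c \) that is Azumaya in codimension one is Azumaya, are consistent with the paper.)
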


The following corollary is an immediate application of~\cref{theorem:main}. 
To the best of the authors' knowledge, this is the only known proof so far and the problem is open if we allow the global dimension of \( \cA \) to be greater than \(2\).

\begin{corollary}\label{corollary:geometricity}
  Let \( \cA \) and \(X \) be as in~\cref{theorem:main} and assume \(X \) is projective.
  Further assume that \( \cX \) is smooth Deligne--Mumford and generically tame (this automatically holds in characteristic zero).
  Then the derived category
    \(
        \derived ^{ \bounded } \module \cA
    \)    
is a \emph{geometric noncommutative scheme} in the sense of~\cite[Definition~4.3]{MR3545926}.
Namely, it has
an admissible embedding into the derived category of a smooth projective variety over
    \(
       \bfk
    \).
\end{corollary}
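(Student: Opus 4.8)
The plan is to transport the statement, via the Morita equivalence of \cref{theorem:main}, to the derived category of the smooth proper stack \( \cX \), and then to invoke the geometricity theorem of Bergh, Lunts, and Schn\"urer, to the effect that the bounded derived category of a smooth, proper, tame Deligne--Mumford stack with quasi-projective coarse space is geometric in the sense of~\cite[Definition~4.3]{MR3545926}. First, \cref{theorem:main} gives an exact equivalence \( \module\cA\simeq\coh^{(1)}\cX \) of abelian categories, hence an equivalence \( \derived^{\bounded}\module\cA\simeq\derived^{\bounded}\coh^{(1)}\cX \) of bounded derived categories compatible with their canonical differential graded enhancements. Second, since \( \cX \) is a \( \mu_N \)-gerbe over a smooth tame stack with trivial generic stabilizer, its category of coherent sheaves splits, according to \( \mu_N \)-weight, into the orthogonal direct sum \( \coh\cX=\bigoplus_{i=0}^{N-1}\coh^{(i)}\cX \) of the categories of \( i \)-twisted coherent sheaves (cf.~\cref{corollary:orthogonal decomposition}); passing to bounded derived categories gives \( \derived^{\bounded}\coh\cX=\bigoplus_{i=0}^{N-1}\derived^{\bounded}\coh^{(i)}\cX \), so that \( \derived^{\bounded}\coh^{(1)}\cX \) is included in \( \derived^{\bounded}\coh\cX \) by a fully faithful functor whose two-sided adjoint is the projection onto this summand. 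In particular this inclusion is an \emph{admissible} embedding, not merely fully faithful.

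Next I would check the hypotheses of the geometricity theorem for \( \cX \). It is smooth by \cref{theorem:main}; it is proper over \( \bfk \) because the coarse moduli morphism \( \cX\to X \) is proper and \( X \) is projective, so its coarse space is in particular quasi-projective; and it is a tame Deligne--Mumford stack under the standing hypotheses, both conditions being automatic in characteristic zero and, when \( \characteristic(\bfk)=p\ge 7 \), following from the fact that the stabilizers of \( \cX \) are built from root-stack, canonical-stack and gerbe data with orders prime to \( p \), since \( \cA \) is a tame order and \( p\ge 7 \). As \( \cX \) is smooth, \( \derived^{\bounded}\coh\cX \) agrees with the category of perfect complexes, and similarly for \( \coh^{(1)}\cX \) and --- since \( \cA \) has finite global dimension --- for \( \module\cA \), so the differential graded formalism of~\cite[Definition~4.3]{MR3545926} applies throughout. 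Bergh--Lunts--Schn\"urer then yield a smooth projective variety \( Y \) over \( \bfk \) together with an admissible embedding \( \derived^{\bounded}\coh\cX\hookrightarrow\derived^{\bounded}(Y) \) of enhanced triangulated categories. Composing
\[
    \derived^{\bounded}\module\cA\;\simeq\;\derived^{\bounded}\coh^{(1)}\cX\;\hookrightarrow\;\derived^{\bounded}\coh\cX\;\hookrightarrow\;\derived^{\bounded}(Y),
\]
and using that the composition of admissible embeddings is again admissible (the left and right adjoints compose), we obtain the desired admissible embedding of \( \derived^{\bounded}\module\cA \) into \( \derived^{\bounded}(Y) \); its source is automatically smooth and proper, being an admissible subcategory of \( \derived^{\bounded}(Y) \) with \( Y \) smooth projective.

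Granting \cref{theorem:main} and the external input of Bergh--Lunts--Schn\"urer, the corollary is essentially formal and I do not expect a real obstacle; the points that need genuine care are (i) verifying that \( \cX \) is a tame Deligne--Mumford stack in positive characteristic, which is where the hypotheses ``\( \cX \) Deligne--Mumford and generically tame'' and \( p\ge 7 \) enter; (ii) upgrading the inclusion \( \derived^{\bounded}\coh^{(1)}\cX\hookrightarrow\derived^{\bounded}\coh\cX \) from fully faithful to admissible, for which the \( \mu_N \)-weight decomposition of the gerbe is exactly what is required; and (iii) propagating differential graded enhancements through the equivalence of \cref{theorem:main}, the summand inclusion, and the Bergh--Lunts--Schn\"urer functor, so that the final composite is a morphism of enhanced categories in Orlov's sense. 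All three are routine.
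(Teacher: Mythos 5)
Your overall route is the same as the paper's: transport the problem through the Morita equivalence of \cref{theorem:main}, use the \( \mu_N \)-weight decomposition of \cref{corollary:orthogonal decomposition} to see that \( \derived^{\bounded}\coh^{(1)}\cX \) is an orthogonal (hence admissible) summand of \( \derived^{\bounded}\coh\cX \), and then invoke the geometricity theorem of Bergh--Lunts--Schn\"urer and compose admissible embeddings. However, there is a genuine gap in how you invoke that theorem. The result you need, \cite[Theorem~6.4]{MR3573964}, is not stated for ``smooth, proper, tame Deligne--Mumford stacks with quasi-projective coarse space''; it applies to smooth \emph{projective} tame algebraic stacks in the sense of \cite[Definition~2.5]{MR3573964}, and projectivity there is Kresch's notion: besides having a projective coarse space, the stack must be a global quotient stack (equivalently, admit a generating vector bundle). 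Your verification list --- smooth, proper, tame, coarse space projective --- omits this quotient-stack condition entirely, so the appeal to Bergh--Lunts--Schn\"urer is not justified as written, especially in characteristic \( p \ge 7 \).

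This omission is precisely where the corollary's extra hypotheses are consumed. The paper uses the assumptions that \( \cX \) is smooth Deligne--Mumford and generically tame, together with \cite{MR2483938}, to conclude that \( \cX \) is a global quotient; only then is \( \cX \) a smooth projective tame stack in the required sense and \cite[Theorem~6.4]{MR3573964} applicable. In your write-up these hypotheses are instead spent on an attempted verification that \( \cX \) is tame Deligne--Mumford --- which is both unnecessary (these are assumptions of the corollary, and tameness of \( \cX \) is already part of \cref{theorem:main}) and not actually justified by your argument that ``the stabilizers have order prime to \( p \)'' (the local group schemes \( H \) and the gerbe order \( N \) are not a priori prime to \( p \); that is exactly why Deligne--Mumfordness is assumed rather than proved). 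To repair the proof, replace your hypothesis check with the citation of Kresch's quotient-stack theorem, after which the rest of your argument (admissibility of the twisted summand and composition of admissible embeddings) goes through as in the paper.
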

Equation~\eqref{equation:Morita equivalence} implies that
\(
   \module
   \cA
\),
and hence its derived category, is equivalent to a component of an orthogonal decomposition of
\(
   \derived ^{ \bounded }
   \coh
   \cX
\).
On the other hand, our hypotheses on \( \cX \) combined with~\cite{MR2483938} tell us \( \cX \) is a global quotient. Hence \( \cX \) is a smooth projective tame algebraic stack
(see~\cite[Definition~2.5]{MR3573964} for the definition of projective stacks), so it is a geometric noncommutative scheme by~\cite[Theorem~6.4]{MR3573964}. Thus we obtain~\cref{corollary:geometricity}.

As another immediate corollary of~\cref{theorem:main}, we obtain an equivalence of Hochschild cohomology by, say,~\cite{keller2003derived}.

\begin{corollary}\label{corollary:equivalence of Hochschild cohomology}
    There is an equivalence of Hochschild cohomology
    \begin{align}\label{equation:equivalence of Hochschild cohomology}
        \HH ^{ \bullet } ( \module \cA )
        \simeq
        \HH ^{ \bullet } ( \coh ^{ ( 1 ) } \cX )
    \end{align}
\end{corollary}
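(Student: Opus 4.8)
The plan is to deduce this formally from the Morita equivalence~\eqref{equation:Morita equivalence} of~\cref{theorem:main} together with the derived invariance of Hochschild cohomology. First I would note that~\eqref{equation:Morita equivalence} is an equivalence of abelian categories $\module \cA \simeq \coh^{(1)}\cX$. Both sides sit inside Grothendieck abelian categories in a standard way — the category of (all) quasi-coherent right $\cA$-modules on the Noetherian scheme $X$, respectively the category of quasi-coherent $1$-twisted sheaves on the Noetherian algebraic stack $\cX$ — and the equivalence extends to these ambient categories. Consequently each bounded derived category $\derived^{\bounded}(\module \cA)$ and $\derived^{\bounded}(\coh^{(1)}\cX)$ carries a canonical dg (equivalently $A_\infty$-) enhancement, e.g.\ the one obtained from a bounded-below complex of injectives or from a compact generator, and the equivalence of abelian categories lifts to a quasi-equivalence of these enhancements.

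Second, I would invoke Keller's theorem on derived invariance of the Hochschild complex~\cite{keller2003derived}: a quasi-equivalence of dg categories induces an isomorphism on Hochschild cohomology that is compatible with the full $B_\infty$-structure, in particular with the cup product and the Gerstenhaber bracket. Applied to the enhanced equivalence $\derived^{\bounded}(\module \cA)\simeq \derived^{\bounded}(\coh^{(1)}\cX)$ produced in the previous step, this yields the isomorphism~\eqref{equation:equivalence of Hochschild cohomology}, and in fact an equivalence of the underlying $B_\infty$-algebras, not merely of graded rings.

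The only point that I would spell out explicitly — and it is a routine one rather than a genuine obstacle — is the comparison between the various a priori definitions of $\HH^{\bullet}$ of the two abelian categories (via the bar-type complex of the sheaf of algebras $\cA$, via $\cExt$-groups of the diagonal for $\cX$, etc.) and the definition through a dg enhancement of the derived category used by Keller, so that~\eqref{equation:equivalence of Hochschild cohomology} is an honest statement. This agreement is well known for the categories at hand. With that understood, the corollary is a formal consequence of~\cref{theorem:main}; all the substantive work lies in establishing the Morita equivalence itself.
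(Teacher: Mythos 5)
Your argument is exactly the paper's: the corollary is deduced as an immediate formal consequence of the Morita equivalence~\eqref{equation:Morita equivalence} in~\cref{theorem:main} together with Keller's derived invariance of the Hochschild complex~\cite{keller2003derived}, which is all the paper says. Your additional remarks on enhancements and comparison of definitions just make explicit what the paper leaves implicit, so the proposal is correct and takes essentially the same route.
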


The category
\(
   \coh \cX
\)
admits an orthogonal decomposition by the subcategories of
\(
   \mbar
\)-twisted sheaves for
\(
   \mbar
   \in
   \bZ / N \bZ
\)
when
\(
   \cX
\)
is a
\(
   \mu _{ N }
\)-gerbe (see~\cref{corollary:orthogonal decomposition}).
In particular we have a product decomposition of Hochschild cohomology as follows:
\begin{align}\label{equation:hochschild cohomology of gerbe}
    \HH ^{ \bullet }
    \left(
        \cX
    \right)
    =
    \prod _{ \mbar \in \bZ / N \bZ }
    \HH ^{ \bullet }
    \left(
        \coh ^{ ( \mbar ) } \cX
    \right)
\end{align}

The decomposition~\eqref{equation:hochschild cohomology of gerbe} admits the following more concrete description based on
\begin{align}
    \HH ^{ \bullet } ( \cX )
    =
    \Hom _{ \cX \times \cX } ^{ \bullet }
    \left(
        \Delta _{ \ast } \cO _{ \cX },
        \Delta _{ \ast } \cO _{ \cX }
    \right),
\end{align}
where
\(
   \Delta
   \colon
   \cX
   \to
   \cX \times \cX
\)
is the diagonal morphism. Note that
\(
   \cX \times \cX
\)
is a
\(
   \mu _{ N }
   \times
   \mu _{ N }
\)-gerbe, and since
\(
   \cO _{ \cX }
\)
is 0-twisted, the decomposition~\eqref{equation:decomposition}
of
\(
    \Delta _{ \ast } \cO _{ \cX }
\)
corresponding to the character group
\(
    \widehat{
        \mu _{ N }
        \times
        \mu _{ N }
    }
    =
    \bZ / N \bZ   
    \times
    \bZ / N \bZ   
\)
is concentrated in the anti-diagonal part as follows:
\begin{align}
    \Delta _{ \ast } \cO _{ \cX }
    =
    \bigoplus _{ \mbar \in \bZ / N \bZ }
    \left(
        \Delta _{ \ast } \cO _{ \cX }
    \right)
    _{ ( \mbar, - \mbar ) }        
\end{align}
Thus we obtain the decomposition
\begin{align}
    \Hom _{ \cX \times \cX } ^{ \bullet }
    \left(
        \Delta _{ \ast } \cO _{ \cX },
        \Delta _{ \ast } \cO _{ \cX }
    \right)
    =
    \prod _{ \mbar \in \bZ / N \bZ }
    \Hom _{ \cX \times \cX } ^{ \bullet }
    \left(
        \left(
            \Delta _{ \ast } \cO _{ \cX }
        \right)
        _{ ( \mbar, - \mbar ) },
        \left(
            \Delta _{ \ast } \cO _{ \cX }
        \right)
        _{ ( \mbar, - \mbar ) }
    \right)
\end{align}
and the
\(
   \mbar
\)-th component on the right hand side corresponds to that of the right hand side of~\eqref{equation:hochschild cohomology of gerbe}.

By definition
\(
   \coh ^{ ( 1 ) } \cX
   =
   \coh ^{ ( \overline{1} ) } \cX
\), so that via~\eqref{equation:equivalence of Hochschild cohomology} the Hochschild cohomology of
\(
   \cA
\)
is equivalent to a direct summand of the Hochschild cohomology of
\(
   \cX
\).
It is conceivable that the deformations of
\(
   \cA
\)
can be investigated by means of the Hochschild cohomology of the gerbe
\(
   \cX
\)
via~\eqref{equation:equivalence of Hochschild cohomology},
which in turn is expected to admit a version of the Hochschild--Kostant--Rosenberg isomorphism. Unfortunately, however, the HKR isomorphism for stacks is not fully established in the literature and in particular not for our gerbe \( \cX \)  (see, say,~\cite{MR4003476}, where the HKR isomorphism is proved for global quotients of smooth varieties by finite groups in characteristic \(0\)).

\vspace{1em } 

Our construction of \( \cX \) itself draws an interesting parallel between noncommutative algebraic geometry and the ``bottom-up'' construction~\cite{MR3719470} due to Geraschenko and the fourth author. Given a smooth tame Deligne--Mumford stack \( \cY \) with trivial generic stabilizer, one may consider the effective Weil divisor \( D = \sum_i e_i D_i \) on \( Y \) where the support of \( D\) is the branch divisor of the coarse space \(\pi\colon\cY\to Y\) and where \( e_i \) is the ramification degrees of \( \pi \) along \( D_i \). The bottom-up construction~\cite[Theorem~1]{MR3719470} proves that \( D \) completely determines \( \cY \). Moreover, one may reconstruct \( \cY \) from \( D \) through an iterated procedure of two simple operations known as root stacks~\cite{MR2306040,MR2450211} and canonical stacks~\cite{MR1005008}; specifically, \( \cY \) is the canonical stack over the root stack over the canonical stack of \(Y\), where the root stack is taken along \( D \). It is important to note that not every \((Y,D)\) arises as the branch divisor of a smooth Deligne--Mumford stack, see~\cite[Example~7]{MR3719470}.


In analogy with the branch divisor of the coarse space map \(\pi\colon\cY\to Y\), one may consider the branch divisor of a noncommutative surface over its underlying commutative surface, i.e.~given a smooth tame split order \((X,\cA)\) of dimension \(2\), one may consider the ramification locus \( D \) of \( \cA \). Our proof of~\cref{theorem:main} shows that \( D \) arises as the branch locus of a smooth tame stack \( \cX \). Moreover, our construction of \( \cX \) follows the bottom-up procedure described above and hence gives precise information about the geometric structure of \( \cX \). Specifically, we construct morphisms of stacks and orders
\begin{figure}[H]
    \centering
    \includegraphics[scale=1.2]{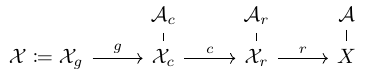}
\end{figure}
which sequentially move structure from $\cA$ into stack. More precisely, the morphisms and algebras satisfy the following properties:

\begin{enumerate}
    \item\label{item:root stack} \( \cX _{ r } \) is normal and \( \cA _{ r } \)  is a maximal order Azumaya in codimension one such that
    \begin{align}
        \module \cA _{ r } \simeq \module \cA.
    \end{align} 
   The map \( r \) resolves the singularities of the pair \( ( X, D ) \) in codimension one, where \( D \) is the
    \( \bQ \)-divisor with standard coefficients determined by \( \cA \).

    \item\label{item:canonical stack} \(\cX _{ c }\) is smooth and \( \cA _{ c } \) is the reflexive hull of \(c ^{ \ast } \cA_r\); it is an Azumaya algebra and satisfies
    \begin{align}
        \module \cA _{ c } \simeq \module \cA _{ r }.
    \end{align}

    \item~\label{item:gerb} \( g \colon \cX _{ g } \to \cX _{ c } \) is the \( \mu _{ N } \)-gerbe corresponding to \( \cA _{ c } \), where
    \(
        [ \cA _{ c } ] \in H ^{ 2 } _{ \etale } ( \cX _{ c }, \mu _{ N } )
    \).
    There is an equivalence of categories
    \begin{align}\label{equation:gerb vs Azumaya}
        \coh^{(1)}\cX _{ g } \simeq \module \cA _{ c }.
    \end{align}
\end{enumerate}

Our constructions described above are both global as well as positive characteristic generalizations of those by Reiten and Van den Bergh~\cite[Section 5]{MR978602}; the latter results are established in the case of complete local rings of surfaces in characteristic zero.
In the case considered in~\cite{MR978602}, the extensions from \(X \) to \(\cX _{ r }\) and from \( \cX _{ r }\) to \( \cX _{ c }\) are both Galois (when interpreted appropriately). In~\cite{MR978602} they prove that the composition of extensions is again Galois, which allows them to directly show the Morita equivalence between \( \cA \) and \( \cA _{ c } \).
On the contrary, in our situation the extensions are not necessarily Galois but are quotients by finite linearly reductive group schemes. In particular, it is not clear whether the composition of the extensions is also such a quotient.
In this paper we use the Galois theory of Hopf algebras as explained in~\cite{Montgomery}, and instead of trying to combine the extensions into one, we prove the Morita equivalence in two steps; namely, we prove the local version of the equivalence between \( \cA \) and \( \cA _{ r } \) first in~\cref{corollary:Lambda1 vs Lambda} and then (the local version of) the equivalence between \( \cA _{ r } \) and \( \cA _{ c } \) in~\cref{corollary:Gamma vs Lambda1}.

%
%
\section*{Acknowledgments}
This project was initiated at the American Institute of Mathematics workshop ``Noncommutative surfaces and Artin's conjecture.'' We thank the American Institute of Mathematics for its warm hospitality. It is a pleasure to also thank Pieter Belmans, Daniel Chan, Valery Lunts, Sid Mathur, and Michel Van den Bergh.

On behalf of all authors, the corresponding author states that there is no conflict of interest.

%
%
\section{Generalizing Reiten-Van den Bergh to characteristic \( p \ge 7 \)}\label{section:Reiten-Van den Bergh}

Our goal in this section is to generalize some of the results established in~\cite{MR978602} from characteristic \( 0 \) to characteristic \( p \ge 7 \). We begin with some preliminaries.

First, throughout this paper we consider \emph{right} modules unless otherwise stated.
For an algebra \( \cA \) (over a symmetric monoidal category), we let \( \cA ^{ e } \coloneqq \cA ^{ \op } \otimes \cA \) denote the enveloping algebra. An \( \cA \)-bimodule is a module over \( \cA ^{ e } \).
We let \( \zentrum ( \cA ) \) denote the \emph{centre} of the algebra \( \cA \).

Next, Azumaya algebras will play an important role in this paper; a standard reference for this material is~\cite[Chapter~4]{MR559531}. We recall the following definition.

\begin{definition}
    Let \( R \) be a commutative local ring. An \emph{Azumaya algebra} over \( R \) is an \( R \)-algebra \( A \) which is a free \( R \)-module of finite rank such that the following natural homomorphism of \( R \)-algebras is an isomorphism.
    \begin{align}\label{equation:Azumaya}
        A \otimes _{ R } A ^{ \op } \to \End _{ R } ( A ); \quad a \otimes b ^{ \op } \mapsto \left[ x \mapsto a x b \right]
    \end{align}
    More generally, if \( \cX \) is an algebraic stack and \( \cA \) is a coherent sheaf of \( \cO _{ \cX } \)-algebras, we say \( \cA \) is a sheaf of Azumaya algebras if \( \cA \) is locally free as an \( \cO _{\cX } \)-module and the following natural map, similar to~\eqref{equation:Azumaya}, is an isomorphism.
    \begin{align}\label{eq:Azumaya condition}
        \cA \otimes _{ \cO _{ \cX } } \cA ^{ \op } \to \cEnd _{ \cO _{ \cX } } \left( \cA \right)
    \end{align}
\end{definition}

\begin{remark}\label{remark:local criterion for Azumaya algebras}
    The Azumaya condition can checked locally in the following sense.
    Let \( \cA \) be a sheaf of algebras over a stack \( \cX \), and let
    \(
        f \colon U \to \cX
    \)
    be a flat surjective morphism of stacks; note that \(f\) is not assumed to be locally finitely presented. 
    Then by faithfully flat descent, \( \cA \) is a sheaf of Azumaya algebras if and only if the pullback
    \(
        f ^{ \ast } \cA
    \)
    is.
\end{remark}

We consider the following class of noncommutative surfaces, which generalizes the set-up in Reiten-Van den Bergh~\cite{MR978602}.

\begin{situation}[{Local version of~\cref{definition:nc-surface}}]\label{situation:local}
    \begin{itemize}
    	\item[{}] 
        \item \( \bfk \) is an algebraically closed field of characteristic \( 0 \) or \( p \ge 7 \).
        \item \( R \) is an integrally closed noetherian complete local \( \bfk \)-algebra of dimension \( 2 \) such that \( R / \frakm = \bfk \). Let \( K \coloneqq Q ( R )\) be the field of fractions.
        \item \( A \) is a central simple algebra over \( K \).
        \item \( \Lambda \) is a tame \( R \)-order in \( A \) of global dimension \( 2 \).
        \item \( R \) is a direct summand of \( \Lambda \) as an \( R \)-module. This is the case, for example, when \( \characteristic ( \bfk ) \not\vert \rank \Lambda \).
    \end{itemize}
\end{situation}

The notion of tameness appearing in~\cref{situation:local} is defined as follows.

\begin{definition}\label{definition:tame order}
    Let \( ( R, A ) \) be as in~\cref{situation:local}. An \( R \)-order \( \Lambda \) in \( A \) is \emph{tame} if it is reflexive as an \( R \)-module and \( \Lambda \otimes _{ R } R _{ \frakp } \) is a hereditary \( R _{ \frakp } \)-algebra for any prime ideal \( \frakp \subseteq R \) of height \( 1\); i.e., it is an \( R _{ \frakp } \)-order of global dimension \(1\).
\end{definition}

We note that the above conditions follow automatically when $A$ has Krull dimension and global dimension two:

\begin{theorem}[\cite{MR0719665}]
  Let $A$ be an $R$ order and suppose that the Krull dimension of $A$ is the global dimension of $A$.  Then $A$ is a Cohen-Macaulay $R$ module and the localization of $A_p,$ at a prime ideal $p$ of $R$, has Krull dimension and global dimension equal to the height of $p$.
\end{theorem}
  
\begin{remark}\label{remark:direct summand assumption}
    It would be interesting to know if the last assumption of~\cref{situation:local} follows from the rest when \( \characteristic ( \bfk ) \ge 7 \).
\end{remark}

\begin{definition}
    An \( R \)-order \( \Lambda \) is \emph{of finite representation type} if there are only finitely many finitely generated indecomposable reflexive \( \Lambda \)-modules up to isomorphisms.
\end{definition}

\begin{remark}\label{remark:gldim 2 implies FRT}
    A tame order of global dimension \( 2\) is of finite representation type (see~\cite[p.~12]{MR978602}).
    In fact, by~\cite[Theorem~1.10]{MR245572}, any reflexive \( \Lambda \)-module is projective.  Alternatively, see~\cite[Proposition~2.17]{MR3251829}.
\end{remark}

We next recall the dualizing bimodule following~\cite[pp.~662--663]{MR2492474}. Consider the enveloping algebra
\( \Lambda ^{ e } = \Lambda ^{ \op } \otimes _{ \bfk } \Lambda \) of \( \Lambda \) and the derived category
\( 
    \derived ( \Lambda ^{ e } )
\)
of right \( \Lambda ^{ e } \)-modules.

\begin{definition}\label{definition:rigid dualizing comple and dualizing bimodules}
    The \emph{rigid dualizing complex} of \( \Lambda \) is a pair consisting of an object \( D _{ \Lambda } \in \derived ( \Lambda ^{ e } ) \) and an isomorphism
    \(
        \chi \colon D _{ \Lambda } \simto \RHom _{ \Lambda ^{ e } } ( \Lambda, D _{ \Lambda } \otimes D _{ \Lambda } )
    \)
    in
    \(
        \derived ( \Lambda ^{ e } )
    \)
    which is uniquely characterized by the following properties:
    \begin{itemize}
        \item \( D _{ \Lambda } \) has finite injective dimension on both sides.
        \item For each \( i \), \( \cH ^{ i } ( D _{ \Lambda } ) \) is finitely generated both as a left
        \(
            \Lambda
        \)-module and as a right \( \Lambda \)-module.
        \item
        Both of the natural maps
        \(
            \Lambda \to \RHom _{ \Lambda } ( D _{ \Lambda }, D _{ \Lambda } )
        \)
        and
        \(
            \Lambda \to \RHom _{ \Lambda ^{ \op } } ( D _{ \Lambda }, D _{ \Lambda } )
        \)
        are isomorphisms in \( \derived ( \Lambda ^{ e } ) \).
    \end{itemize}

    The \emph{dualizing bimodule} \( \omega _{ \Lambda } \) is defined as follows.
    \begin{align}
        \omega _{ \Lambda } = \cH ^{ - \dim R } ( D _{ \Lambda } )
    \end{align}
\end{definition}

\begin{remark}\label{remark:bimodule}
    \begin{itemize}
        	\item[{}] 
        \item Rigid dualizing complexes exist for algebras \( \Lambda \) as in~\cref{situation:local} by~\cite[Proposition~5.7]{MR1753810}. Hence
        \(
            \omega _{ \Lambda }
            =
            \cH ^{ - 2 } ( D _{ \Lambda } )
        \)
        exists as well.
        \item We have the following explicit descriptions for \( D _{ \Lambda } \) and \( \omega _{ \Lambda } \). The proof of~\cite[Lemma~2.5]{MR2492474} applies to the \( \Lambda \) under consideration. Below \( D _{ R } \simeq \omega _{ R } [ 2 ] \) denotes the rigid dualizing complex of \( R \) (recall that \( R \) as in~\cref{situation:local} is Cohen-Macaulay).
        \begin{align}
            D _{ \Lambda } \simeq \RHom _{ R } ( \Lambda, D _{ R } )\\
            \omega _{ \Lambda } \simeq \Hom _{ R } ( \Lambda, \omega _{ R } )\label{equation:omega_Lambda explicit}
        \end{align}	
        \item
        By the uniqueness of the balanced dualizing complexes, both they and dualizing bimodules commute with localizations to (Zariski/ \'etale/ formal) neighborhoods and glue together if they exist locally. In particular the dualizing bimodules in the global~\cref{definition:nc-surface}, which are defined similarly to~\cref{definition:rigid dualizing comple and dualizing bimodules}, exist and are given by the following formula.
        \begin{align}\label{equation:dualizing bimodule of cA}
            \omega _{ \cA } = \cHom _{ \cO _{ X } } ( \cA, \omega _{ X } )
        \end{align}
    \end{itemize}
\end{remark}

\begin{definition}\label{definition:Rees algebra construction}
    In~\cref{situation:local}, let \( I \subset A \) be a \emph{divisorial ideal} of
    \(
       \Lambda
    \)
    (cf.~\cite[p.~1, p.~63]{MR978602}); i.e., a finitely generated \( \Lambda \)-submodule of \( A \) such that \( I K = A \) which is reflexive as an \( R \)-module and
    \(
        I _{ \frakp }
    \)
    is invertible for any prime ideal
    \(
        \frakp \subset R
    \)
    of height \( 1 \). We associate to \( I \) the following \( \bZ \)-graded \( A \)-algebra
    \begin{align}
        \Lambda [ I ]
        \coloneqq
        \bigoplus
        _{
            j \in \bZ
        }
        I ^{ ( j ) }
        x ^{ j }
        \subseteq
        A [ x, x ^{ - 1 } ],
    \end{align}
    where
    \begin{align}
        I
        ^{
            ( j )
        }
        &
        \coloneqq
        \begin{cases}
            \left(
                I ^{
                    \otimes
                    j
                }
            \right)
            ^{
                \vee
                \vee
            }
            &
            j
            \ge
            0
            \\
            \left[
                \Lambda
                \colon
                I
            \right]
            ^{
                ( - j )
            }
            &
            j
            <
            0
        \end{cases}\\
        \left[
            \Lambda
            \colon
            I
        \right]
        &
        \coloneqq
        \left\{
           a \in A
           \mid
           a I
           \subseteq
           \Lambda
        \right\}.
    \end{align}
    Here we opted for the notation
    \(
       \Lambda [ I ]
    \)
    over
    \(
        \Lambda
        [
            I
            ^{
                \pm
            }
        ]       
    \),
    following~\cite[p.~63]{MR978602}.

    If there is \( n > 0 \) such that
    \(
        I ^{ ( n ) }
        =
        \Lambda a
        \left(
            =
            a
            \Lambda
        \right)
    \)
    for some
    \(
        a \in K ^{ \ast }
    \), set
    \begin{align}
        {\Lambda [ I ]} _{ n } \coloneqq \Lambda [ I ] / ( 1 - a x ^{ n } ).
    \end{align}
\end{definition}

Note that
\(
    {\Lambda [ I ]} _{ n }
\)
depends on the choice of the element
\(
   a \in K ^{ \ast }
\).
Note that if $(a,n)$ solves this equation, then $(a^k,nk)$ provides another solution.  So we choose $(a,n)$ minimal such that $I^{(n)} = a \Lambda$ with respect to the  partial order $(a,n) \leq (b,m)$ if $n|m$ and $a^{\frac{m}{n}}\Lambda = b\Lambda$.
(\cite[p.~63]{MR978602}).


Applying the above construction to the divisorial ideal
\(
    I =
    D
    \left(
        \Lambda
        /
        R
    \right)
\)
defined in~\cite[p.~63]{MR978602}
(see also~\cref{remark:dualizingStuff} for details),
where the existence of \( a \) and \( n > 0 \) as above is explained in the first paragraph of~\cite[p.~67]{MR978602},
we obtain
\begin{align}
    \Lambda _{ 1 }
    &
    \coloneqq
    {\Lambda [
        D
        \left(
            \Lambda
            /
            R
        \right)
    ]} _{ n }.
\end{align}
As we explain in~\cref{remark:dualizingStuff} below, the dualizing bimodule
\(
    \omega _{ \Lambda } ^{ - 1 }
\)
is isomorphic to
\(
    D
    \left(
        \Lambda
        /
        R
    \right)
\)
as
\(
   \Lambda
\)-bimodule. In this paper we always think of
\(
   \omega _{ \Lambda } ^{ - 1 }
\)
as a divisorial ideal via this identification. In particular we use the following notation as well.
\begin{align}\label{equation:definition of Lambda1}
    \Lambda _{ 1 }
    &
    = {\Lambda [ \omega _{ \Lambda } ^{ - 1 } ]} _{ n }\\
    R _{ 1 }
    &
    \coloneqq \zentrum ( \Lambda _{ 1 } )
\end{align}

\begin{remark}\label{remark:dualizingStuff}
    It turns out that the divisorial ideal
    \(
        D
        \left(
            \Lambda
            /
            R
        \right)
    \)
    defined in~\cite[p.~63]{MR978602} is isomorphic to
    \( \omega _{ \Lambda } ^{ - 1 } \)
    as
    \(
       \Lambda
    \)-bimodule.
    Indeed, since both sides are reflexive, it is enough to confirm the assertion locally at all height \( 1 \) primes \( \frakp \) of \( R \) and use the structure theorem for the hereditary order \( \Lambda _{ \frakp } \).
    Note that the proof of~\cite[Proposition~2.7]{MR2492474} gives us the formula
    \[\omega_{\Lambda} \simeq \omega_R \otimes \rad \Lambda \frakp^{-1}\]
    in codimension one at \( \frakp \). Let \(e\) be the ramification index of \( \Lambda \) at \( \frakp \), i.e.~\(e\geq1\) is the smallest integer such that 
    \[\rad \Lambda^e =\frakp.\]
    Note furthermore that \(e|\rank \Lambda \).
\end{remark}

Although~\cite{MR978602} discusses the \( \bZ / n \)-graded algebra \( \Lambda _{ 1 } \) and its centre \( R _{ 1 } \), we could instead consider the \( \bZ \)-graded versions as follows.
\begin{align}
    \Lambdatilde
    &
    \coloneqq \Lambda [ \omega _{ \Lambda } ^{ - 1 } ]\\
    \Rtilde
    &
    \coloneqq \zentrum ( \Lambdatilde )
\end{align}

In~\cref{proposition:Z-graded vs Z/n-graded} we show that
\(
   \Lambda _{ 1 }
\)
and
\(
   \Lambdatilde
\)
are graded Morita equivalent. \cref{lemma:Lambdatilde as an invariant ring} below plays a central role in its proof. Consider the
\(
    (\bZ / n \times \bZ)
\)-algebra
\(
    \Lambda _{ 1 } [ \xi, \xi ^{ - 1 } ]
\), where the \( \bZ \)-degree of \( \Lambda _{ 1 } \) is taken to be \( 0 \) and
\begin{align}
    \deg \xi
    =
    \left( \overline{ - 1 }, 1 \right) \in \bZ / n \times \bZ.
\end{align}

\begin{lemma}\label{lemma:Lambdatilde as an invariant ring}
    There is an isomorphism of \( \bZ \)-graded algebras
    \begin{align}
        {
            \left(
                \Lambda _{ 1 } [ \xi, \xi ^{ - 1 } ]
            \right)
        }
        _{ \overline{ 0 } }
        \simeq
        \Lambdatilde,
    \end{align}
    where the left hand side is the degree \( \overline{ 0 } \) part with respect to the
    \(
        \bZ / n
    \)-grading.
\end{lemma}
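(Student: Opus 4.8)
The plan is to build the isomorphism directly out of the quotient map $\pi\colon\Lambdatilde=\Lambda[\omega_\Lambda^{-1}]\twoheadrightarrow\Lambda_1$. The first thing I would record is that $t\coloneqq ax^n$ is a homogeneous \emph{central unit} of $\Lambdatilde$ of $\bZ$-degree $n$: it is central because $a\in K^{\ast}=\zentrum(A)^{\ast}$ is central in $A[x,x^{-1}]$; it lies in $\Lambdatilde_n$ because $I^{(n)}=\Lambda a\Lambda=a\Lambda$ (using centrality of $a$) forces $a\in I^{(n)}$; and its inverse $a^{-1}x^{-n}$ lies in $\Lambdatilde_{-n}$ because taking reflexive products of divisorial ideals gives $I^{(n)}\cdot_{\mathrm{refl}}I^{(-n)}=\Lambda$, whence $I^{(-n)}=a^{-1}\Lambda$. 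Consequently $\Lambda_1=\Lambdatilde/(1-t)\Lambdatilde$ (note $1-t$ is central), and its $\bZ/n$-grading is precisely the reduction of the $\bZ$-grading of $\Lambdatilde$ modulo the relation $t\equiv1$.

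Next I would prove that $\pi$ restricts to an \emph{isomorphism} $\Lambdatilde_m\xrightarrow{\,\sim\,}(\Lambda_1)_{\overline m}$ for every $m\in\bZ$. Surjectivity is immediate since $(\Lambda_1)_{\overline m}$ is the image of $\bigoplus_{m'\equiv m}\Lambdatilde_{m'}$ and multiplication by $t^{k}$ identifies $\Lambdatilde_m$ with $\Lambdatilde_{m+kn}$ modulo $(1-t)$. For injectivity it suffices to check $(1-t)\Lambdatilde\cap\Lambdatilde_m=0$: if $w=(1-t)z\in\Lambdatilde_m$ with $z=\sum_j z_j$ a finite sum of homogeneous pieces $z_j\in\Lambdatilde_j$, then comparing $\bZ$-degrees forces $z_j=t\,z_{j-n}$ for every $j\neq m$; running this relation along the coset $m+n\bZ$ upward yields $z_{m+\ell n}=t^{\ell}z_m$ for $\ell\ge1$, so finiteness of the support together with invertibility of $t$ forces $z_m=0$ and hence $z_{m+\ell n}=0$, and running it downward forces $z_{m-\ell n}=0$ for all $\ell\ge1$ as well; thus $z=0$ and $w=0$.

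Finally I would assemble the map. The $\bZ$-degree-$k$ component of $\bigl(\Lambda_1[\xi,\xi^{-1}]\bigr)_{\overline0}$ is precisely $(\Lambda_1)_{\overline k}\,\xi^{k}$, since for $\lambda\in(\Lambda_1)_{\overline j}$ the element $\lambda\xi^{k}$ has $\bZ/n$-degree $\overline{j}+k\,\overline{-1}=\overline{j-k}$. So I define $\phi\colon\Lambdatilde\to\bigl(\Lambda_1[\xi,\xi^{-1}]\bigr)_{\overline0}$ on homogeneous elements by $\phi(u)=\pi(u)\,\xi^{m}$ for $u\in\Lambdatilde_m$; this lands in the $\overline0$-part because $\pi(u)\in(\Lambda_1)_{\overline m}$, it is an algebra homomorphism because $\xi$ is central and $\pi$ is multiplicative (so $\phi(u)\phi(v)=\pi(u)\xi^{m}\pi(v)\xi^{m'}=\pi(uv)\xi^{m+m'}=\phi(uv)$), it is graded of degree $0$, and in each degree $m$ it is the composite of the isomorphism $\pi\colon\Lambdatilde_m\xrightarrow{\,\sim\,}(\Lambda_1)_{\overline m}$ of the previous paragraph with the bijection $\lambda\mapsto\lambda\xi^{m}$ onto $(\Lambda_1)_{\overline m}\xi^{m}$. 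A graded algebra homomorphism that is bijective in every degree is an isomorphism, so $\phi$ (or its inverse) is the asserted isomorphism of $\bZ$-graded algebras.

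The main obstacle is the single non-formal point, namely the vanishing $(1-t)\Lambdatilde\cap\Lambdatilde_m=0$, i.e.\ that $\Lambdatilde$ embeds degreewise into $\Lambda_1$; its proof genuinely needs both the invertibility of $t$ and the fact that elements of $\Lambdatilde$ are finitely supported in the $\bZ$-grading, and it is the only step that is not a tautological manipulation of gradings and units.
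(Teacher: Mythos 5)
Your proof is correct and follows essentially the same route as the paper: both identify the \( \bZ \)-degree-\( m \) piece of \( \left( \Lambda_1[\xi,\xi^{-1}] \right)_{\overline{0}} \) with \( (\Lambda_1)_{\overline{m}}\,\xi^m \) and match it with \( \Lambdatilde_m \) via the central degree-\( n \) unit \( a x^n \) (the paper writes down the inverse map \( \lambda\xi^j \mapsto q_\ell^{-1}(\lambda)a^q x^j \), you write the quotient-map direction). The only difference is that you explicitly prove the degreewise bijectivity \( \Lambdatilde_m \simeq (\Lambda_1)_{\overline{m}} \), i.e.\ \( (1-ax^n)\Lambdatilde \cap \Lambdatilde_m = 0 \), which the paper simply invokes as ``the natural bijection \( q_\ell \)''; this is a rigor upgrade rather than a different argument.
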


\begin{proof}
    To avoid notational complication, put
    \begin{align}
        \Delta \coloneqq \Lambda _{ 1 } = \bigoplus _{ \ibar \in \bZ / n } \Delta _{ \ibar }.
    \end{align}
    Then
    \begin{align}
        {
            \left( \Delta [ \xi, \xi ^{ - 1 } ] \right)
        }
        _{ \overline{ 0 } }
        =
        \bigoplus _{ j \in \bZ } \Delta _{ \jbar } \xi ^{ j }
        =
        \bigoplus _{ \ibar \in \bZ / n }
        \bigoplus _{ \substack{ j \in \bZ \\ \overline{ j } = \ibar } }
        \Delta _{ \ibar } \xi ^{ j }.
    \end{align}

    Define the isomorphism
    \(
        \varphi \colon
        \left( \Delta [ \xi, \xi ^{ - 1 } ] \right) _{ \overline{ 0 } }
        \to
        \Lambdatilde
    \)
    as follows. For
    \(
        0 \le \ell < n
    \)
    and
    \(
        q \in \bZ
    \), put
    \(
        j = \ell + q n
    \). Let
    \(
        q _{ \ell } \colon \Lambdatilde _{ \ell } \simto \Delta _{ \ellbar }
    \)
    be the natural bijection. Then define the degree \( j \) component of \( \varphi \) as follows.
    \begin{align}
        \varphi _{ j } \colon \Delta _{ \jbar } \xi ^{ j } \to \Lambdatilde _{ j };
        \quad
        \lambda \xi ^{ j } \mapsto q _{ \ell } ^{ - 1 } ( \lambda ) a ^{ q } x ^{
            q
            n
        }
        =
        q _{ \ell } ^{ - 1 } ( \lambda ) a ^{ \lfloor \frac{ j }{ n } \rfloor } x ^{
            q
            n
        }
    \end{align}
    One can easily confirm that \( \varphi \) is a homomorphism of \( \bZ \)-graded algebras.
    In order to see that \( \varphi \) is an isomorphism, note that
    \begin{align}
        \Lambdatilde _{ j }
        =
        \Lambdatilde _{ \ell } \left( \Lambdatilde _{ n } \right) ^{ q }
        =
        \Lambdatilde _{ \ell } a ^{ q }.
    \end{align}
\end{proof}

\begin{corollary}\label{corollary:Rtilde as invariant ring}
    \begin{align}
        \left( R _{ 1 } [ \xi, \xi ^{ - 1 } ] \right) _{ \overline{ 0 } }
        \simeq
        \Rtilde.
    \end{align}
\end{corollary}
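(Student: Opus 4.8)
The plan is to deduce \cref{corollary:Rtilde as invariant ring} from \cref{lemma:Lambdatilde as an invariant ring} by simply taking centres of the two $\bZ$-graded algebras appearing there. First I would observe that centre commutes with the relevant formation of degree-$\overline 0$ parts: if $\Delta$ is a $(\bZ/n\times\bZ)$-graded algebra, then an element of $\Delta_{\overline 0}$ (the degree-$\overline 0$ part with respect to the $\bZ/n$-grading, so still a $\bZ$-graded algebra) is central in $\Delta_{\overline 0}$ if and only if it is central in $\Delta$ — this uses that $\Delta$ is generated over $\Delta_{\overline 0}$ by the invertible element $\xi$ together with the subalgebra $\Lambda_1$, so commuting with everything in $\Delta_{\overline 0}$ plus the grading forces commuting with $\xi$ as well. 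Consequently $\zentrum(\Delta[\xi,\xi^{-1}])_{\overline 0} = \left(\zentrum(\Delta[\xi,\xi^{-1}])\right)_{\overline 0}$, and then I would identify $\zentrum(\Lambda_1[\xi,\xi^{-1}])$ with $R_1[\xi,\xi^{-1}]$ using that $\xi$ is a central variable adjoined to $\Lambda_1$ (it commutes with $\Lambda_1$ by construction, since $\Lambda_1$ sits in $\bZ$-degree $0$), so the centre is $\zentrum(\Lambda_1)[\xi,\xi^{-1}] = R_1[\xi,\xi^{-1}]$.

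Next I would apply the isomorphism $\varphi$ of $\bZ$-graded algebras from \cref{lemma:Lambdatilde as an invariant ring}. Any isomorphism of algebras carries centre to centre, so $\varphi$ restricts to an isomorphism $\zentrum\left((\Lambda_1[\xi,\xi^{-1}])_{\overline 0}\right) \simto \zentrum(\Lambdatilde)$. Combining with the previous paragraph, the left side is $\left(\zentrum(\Lambda_1[\xi,\xi^{-1}])\right)_{\overline 0} = \left(R_1[\xi,\xi^{-1}]\right)_{\overline 0}$, while the right side is $\zentrum(\Lambdatilde) = \Rtilde$ by definition \eqref{equation:definition of Lambda1}. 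This yields the claimed isomorphism $\left(R_1[\xi,\xi^{-1}]\right)_{\overline 0}\simeq\Rtilde$.

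The only real point requiring care — and the step I would flag as the main obstacle, though it is mild — is the commutation of centre with the $\bZ/n$-grading, i.e.\ that taking $\zentrum$ and then the degree-$\overline 0$ part agrees with taking the degree-$\overline 0$ part and then $\zentrum$. The containment $\left(\zentrum(\Delta[\xi,\xi^{-1}])\right)_{\overline 0}\subseteq\zentrum\left(\Delta_{\overline 0}\right)$ is immediate. For the reverse, given $z\in\zentrum(\Delta_{\overline 0})$ one must check $z$ commutes with all of $\Delta[\xi,\xi^{-1}]$; since this algebra is generated as a ring by $\Delta_{\overline 0}$ and the units $\xi^{\pm 1}$, it suffices to check $z\xi=\xi z$, and this follows because conjugation by $\xi$ preserves the subalgebra $\Lambdatilde\cong\Delta_{\overline 0}$ and acts as a graded automorphism whose restriction to the centre $\Rtilde$ can be computed degree by degree; concretely, on $R_1$ it is the identity twist coming from $a\in K^\ast\subseteq\zentrum$, so $\xi$ centralizes $R_1$ and hence $z$. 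In fact, once one knows $\xi$ is central in $\Lambda_1[\xi,\xi^{-1}]$ modulo the grading shift — which is exactly the content of how $\Lambdatilde$ was built — the argument collapses to the one-line observation that $\zentrum$ and the $(\bZ/n)$-decomposition commute for a graded algebra. I would write this out in two or three lines and conclude.
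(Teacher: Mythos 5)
Your proposal is correct and takes essentially the same route as the paper, which deduces the corollary from \cref{lemma:Lambdatilde as an invariant ring} by exactly the computation you give, namely $\zentrum\left( \left( \Lambda_1[\xi,\xi^{-1}] \right)_{\overline{0}} \right) \simeq \left( \zentrum\left( \Lambda_1[\xi,\xi^{-1}] \right) \right)_{\overline{0}} = \left( \zentrum(\Lambda_1)[\xi,\xi^{-1}] \right)_{\overline{0}}$. The only comment is that your discussion of conjugation by $\xi$ is superfluous: $\xi$ is central in the Laurent extension $\Lambda_1[\xi,\xi^{-1}]$ by construction, so $z\xi=\xi z$ is automatic, and the nontrivial containment rests only on the fact (which your generation argument already supplies) that every homogeneous piece $(\Lambda_1)_{\ibar}$ is obtained from the degree-$\overline{0}$ part by multiplying by powers of the central unit $\xi$.
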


\begin{proof}
    This follows from~\cref{lemma:Lambdatilde as an invariant ring} 
    and the following computation.
    \begin{align}
        \zentrum \left( \left(\Lambda _{ 1 } [ \xi, \xi ^{ - 1 } ]\right) _{ \overline{ 0 } } \right)
        \simeq
        \left( \zentrum \left(\Lambda _{ 1 } [ \xi, \xi ^{ - 1 } ] \right) \right) _{ \overline{ 0 } }
        =
        \left( \zentrum \left( \Lambda _{ 1 } \right) [ \xi, \xi ^{ - 1 } ] \right) _{ \overline{ 0 } }
    \end{align}
\end{proof}

\begin{corollary}\label{corollary:Rtilde is of finite type}
    \( \Rtilde \) is an algebra of finite type over \( R \), and \( \Lambdatilde \) is finitely generated as an \( \Rtilde \)-module.
\end{corollary}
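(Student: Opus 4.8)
The plan is to deduce the statement from \cref{lemma:Lambdatilde as an invariant ring} and \cref{corollary:Rtilde as invariant ring}, which exhibit \(\Lambdatilde\) and \(\Rtilde\) as the degree-\(\overline{0}\) parts, for the auxiliary \(\bZ/n\)-grading, of \(\Lambda_1[\xi,\xi^{-1}]\) and \(R_1[\xi,\xi^{-1}]\). Thus everything reduces to a general finiteness fact for \(\bZ/n\)-graded rings: if a commutative ring \(S=\bigoplus_{\overline{i}\in\bZ/n}S_{\overline{i}}\) is finitely generated as an algebra over a noetherian subring \(R\subseteq S_{\overline{0}}\), then \(S_{\overline{0}}\) is again a finitely generated \(R\)-algebra and \(S\) is a finite \(S_{\overline{0}}\)-module; consequently, for every \(\bZ/n\)-graded \(S\)-module \(M\) that is finitely generated over \(S\), the component \(M_{\overline{0}}\) is finitely generated over \(S_{\overline{0}}\).

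First I would record the module-finiteness inputs. By construction \(\Lambda_1=\Lambda[\omega_\Lambda^{-1}]_n\) is, as a \(\bZ/n\)-graded object, the direct sum of the finitely many pieces \((\omega_\Lambda^{-1})^{(i)}x^i\) for \(0\le i<n\), each of them a finitely generated \(R\)-module, so \(\Lambda_1\) is a finite \(R\)-module. Since \(R\) is noetherian, the subring \(R_1=\zentrum(\Lambda_1)\subseteq\Lambda_1\) is then a finite \(R\)-module as well, hence a finite \(R\)-algebra; in particular \(R_1\) is noetherian, \(R_1[\xi,\xi^{-1}]\) is a finitely generated \(R\)-algebra, and \(\Lambda_1[\xi,\xi^{-1}]=\Lambda_1\otimes_{R_1}R_1[\xi,\xi^{-1}]\) is finitely generated as a module over it.

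To prove the general fact I would choose algebra generators of \(S\) over \(R\) and, after decomposing each of them into \(\bZ/n\)-homogeneous components, assume they are homogeneous, say \(s_1,\dots,s_m\) of degrees \(d_1,\dots,d_m\). Since \(n d_k=\overline{0}\), each \(s_k^n\) lies in \(S_{\overline{0}}\), so \(T\coloneqq R[s_1^n,\dots,s_m^n]\) is a finitely generated, hence noetherian, \(R\)-subalgebra of \(S_{\overline{0}}\). Reducing exponents modulo \(n\) shows that the finitely many monomials \(s_1^{a_1}\cdots s_m^{a_m}\) with \(0\le a_k<n\) generate \(S\) as a \(T\)-module; thus \(S\) is a finite, and so noetherian, \(T\)-module, and its \(T\)-submodule \(S_{\overline{0}}\) is finite over \(T\). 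Therefore \(S_{\overline{0}}\) is a finitely generated \(R\)-algebra and \(S\) is a finite \(S_{\overline{0}}\)-module. The module assertion now follows immediately: a finitely generated \(S\)-module \(M\) is finite over \(S_{\overline{0}}\), and since \(S_{\overline{0}}\) is noetherian its submodule \(M_{\overline{0}}\) is finite over \(S_{\overline{0}}\).

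Applying this with \(S=R_1[\xi,\xi^{-1}]\), \(M=\Lambda_1[\xi,\xi^{-1}]\) and base ring \(R\) (which does sit in the \(\bZ/n\)-degree-\(\overline{0}\) part), and then using \cref{corollary:Rtilde as invariant ring} and \cref{lemma:Lambdatilde as an invariant ring}, gives exactly that \(\Rtilde\) is of finite type over \(R\) and \(\Lambdatilde\) is a finite \(\Rtilde\)-module. I do not expect a real obstacle: the argument is elementary graded commutative algebra and is characteristic-free, so in particular it needs no linear reductivity of \(\mu_n\). The only mild subtlety is keeping track of the two gradings at once---the \(\bZ\)-grading by powers of \(x\), equivalently of \(\xi\), and the auxiliary \(\bZ/n\)-grading---and arranging the generators of \(S\) to be homogeneous for the \(\bZ/n\)-grading, so that \(s_k^n\in S_{\overline{0}}\).
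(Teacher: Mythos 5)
Your proof is correct, and its overall architecture matches the paper's: both reduce the statement, via \cref{lemma:Lambdatilde as an invariant ring} and \cref{corollary:Rtilde as invariant ring}, to the finiteness of \( R_1 \) and \( \Lambda_1 \) over \( R \) together with a finiteness statement about degree-\( \overline{0} \) parts of \( \bZ/n \)-graded rings and modules. The one genuine difference is how that last input is obtained. The paper simply invokes the reductivity of \( \mu_n \) (finite generation of invariant rings of reductive group schemes acting on finite-type algebras) for the first assertion, and for the second uses coherence of \( \Lambda_1[\xi,\xi^{-1}] \) over \( R_1[\xi,\xi^{-1}] \), hence over \( \Rtilde \), plus the submodule argument. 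You instead prove the needed graded finiteness by hand: passing to homogeneous generators \( s_1,\dots,s_m \), noting \( s_k^n \in S_{\overline{0}} \), and bounding exponents below \( n \) to get \( S \) finite over the noetherian subalgebra \( T = R[s_1^n,\dots,s_m^n] \subseteq S_{\overline{0}} \); this yields both that \( S_{\overline{0}} \) is a finite-type \( R \)-algebra and that \( S \), and hence any finite graded \( S \)-module and its \( \overline{0} \)-component, is finite over \( S_{\overline{0}} \). Your route is self-contained and treats both assertions uniformly through one elementary lemma, at the cost of a slightly longer argument; the paper's route is shorter but leans on the general invariant-theoretic fact (which, for \( \mu_n \), is exactly the graded statement you prove directly). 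The applicability checks you make (that \( R \) sits in \( \bZ/n \)-degree \( \overline{0} \), that \( \Lambda_1 \) and hence \( R_1 \) are finite \( R \)-modules, and that \( \Lambda_1[\xi,\xi^{-1}] \) is finite over \( R_1[\xi,\xi^{-1}] \)) are the same ones the paper relies on, so there is no gap.
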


\begin{proof}
    The first assertion follows from~\cref{corollary:Rtilde as invariant ring}. To see this, since \( \mu _{ n } \) is reductive, it is enough to show that
    \(
        R _{ 1 } [ \xi, \xi ^{ - 1 } ]
    \)
    is of finite type over \( R \). This in turn follows from the fact that \( R _{ 1 } \) is a finitely generated \( R \)-module, as it is an \( R \)-submodule of the \( R \)-module \( \Lambda _{ 1 } \), which is obviously finitely generated.

    The second assertion similarly follows from~\cref{lemma:Lambdatilde as an invariant ring}. In fact, as \( \Lambda _{ 1 } \) is a coherent \( R _{ 1 } \)-module, \( \Lambda _{ 1 } [ \xi, \xi ^{ - 1 } ] \) is coherent as an \( R _{ 1 } [ \xi, \xi ^{ - 1 } ]\)-module, which in turn is a coherent \( \Rtilde \)-module by~\cref{corollary:Rtilde as invariant ring}.
    Since \( \Lambdatilde \) is an \( \Rtilde \)-submodule of \( \Lambda _{ 1 } [ \xi, \xi ^{ - 1 } ] \) by~\cref{lemma:Lambdatilde as an invariant ring}, it is also a coherent \( \Rtilde \)-module.
\end{proof}

\begin{corollary}\label{corollary:iso of tame algebraic stacks}
    There is an isomorphism of algebraic stacks
    \begin{align}\label{equation:iso of tame algebraic stacks}
        \left[ \Spec \Rtilde / \bGm \right]
        \simeq
        \left[ \Spec R _{ 1 } / \mu _{ n } \right].
    \end{align}
\end{corollary}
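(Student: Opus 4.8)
The plan is to realise both sides of~\eqref{equation:iso of tame algebraic stacks} as the quotient of the single affine scheme $Y\coloneqq\Spec R_1[\xi,\xi^{-1}]$ by the diagonalizable group $\mu_n\times\bGm$, taking the two factors in the two possible orders; the point is then simply that $[\Spec\Rtilde/\bGm]$ and $[\Spec R_1/\mu_n]$ are both identified with $[Y/(\mu_n\times\bGm)]$. Here $\mu_n\times\bGm$ acts on $Y$ through the $(\bZ/n\times\bZ)$-grading of $R_1[\xi,\xi^{-1}]$ fixed just before~\cref{lemma:Lambdatilde as an invariant ring} (with $\deg\xi=(\overline{-1},1)$), so that $\mu_n$ acts via the $\bZ/n$-grading and $\bGm$ via the $\bZ$-grading.

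First I would quotient by $\bGm$. Since $R_1$ lies in $\bZ$-degree $0$ and $\xi$ is a unit of $\bZ$-degree $1$, we have $Y\cong\Spec R_1\times_{\bfk}\bGm$ with $\bGm$ acting by translation on the second factor. Hence the $\bGm$-action on $Y$ is free, $[Y/\bGm]\simeq\Spec R_1$, and the residual $\mu_n$-action on $\Spec R_1$ is the one attached to the $\bZ/n$-grading of $R_1$ (an element $r\in(R_1)_{\overline{i}}$ has degree $(\overline{i},0)$ in $R_1[\xi,\xi^{-1}]$, so its $\bZ/n$-degree is unchanged). Therefore $[\Spec R_1/\mu_n]\simeq\bigl[[Y/\bGm]/\mu_n\bigr]\simeq[Y/(\mu_n\times\bGm)]$.

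Next I would instead quotient by $\mu_n$ first. The element $\xi^{-1}$ is a homogeneous unit of $\bZ/n$-degree $\overline{1}$, so $R_1[\xi,\xi^{-1}]$ is strongly $\bZ/n$-graded and $\mu_n$ acts freely on $Y$ — one sees the freeness directly from the fact that $\mu_n$ acts on the $\bGm$-factor $\Spec\bfk[\xi,\xi^{-1}]$ by translations through the subgroup scheme $\mu_n\subseteq\bGm$. Consequently $[Y/\mu_n]$ is the affine scheme $\Spec\bigl(R_1[\xi,\xi^{-1}]\bigr)^{\mu_n}=\Spec\bigl(R_1[\xi,\xi^{-1}]\bigr)_{\overline{0}}$, which is $\Spec\Rtilde$ by~\cref{corollary:Rtilde as invariant ring}. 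The residual $\bGm$-action on $\Spec\Rtilde$ corresponds to the $\bZ$-grading inherited from $R_1[\xi,\xi^{-1}]$; since the isomorphism $\varphi$ of~\cref{lemma:Lambdatilde as an invariant ring}, and hence the isomorphism of~\cref{corollary:Rtilde as invariant ring}, are $\bZ$-graded, this is precisely the grading of $\Rtilde=\zentrum(\Lambdatilde)$ coming from the $x$-grading of $\Lambdatilde=\Lambda[\omega_{\Lambda}^{-1}]$, i.e.\ the $\bGm$-action appearing on the left-hand side of~\eqref{equation:iso of tame algebraic stacks}. Thus $[\Spec\Rtilde/\bGm]\simeq\bigl[[Y/\mu_n]/\bGm\bigr]\simeq[Y/(\mu_n\times\bGm)]$, and combining with the previous paragraph yields~\eqref{equation:iso of tame algebraic stacks}.

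Both freeness assertions, and the resulting identifications of these particular GIT quotients with the corresponding stack quotients, are routine (each uses only that $\xi$, resp.\ $\xi^{-1}$, is a homogeneous unit). The step I would regard as the main obstacle is the verification that the two \emph{residual} actions produced above really are the actions named on the two sides of~\eqref{equation:iso of tame algebraic stacks}: this is a matter of bookkeeping the $\bZ$- and $\bZ/n$-gradings through the inclusions $R_1,\Rtilde\hookrightarrow R_1[\xi,\xi^{-1}]$ and through the isomorphism $\varphi$ of~\cref{lemma:Lambdatilde as an invariant ring}, but it is where the content of the statement lies.
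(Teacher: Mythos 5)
Your proposal is correct and is essentially the paper's own argument: the paper likewise identifies both sides with $\left[\Spec R_1[\xi,\xi^{-1}]/(\mu_n\times\bGm)\right]$, using~\cref{corollary:Rtilde as invariant ring} and the freeness of the $\mu_n$- and $\bGm$-actions on $\Spec R_1[\xi,\xi^{-1}]$. You merely spell out the freeness and the grading bookkeeping for the residual actions, which the paper leaves implicit.
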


\begin{proof}
    By~\cref{corollary:Rtilde as invariant ring}, both sides are isomorphic to the quotient stack
    \begin{align}
        \left[ \Spec R _{ 1 } [ \xi, \xi ^{ - 1 } ] / \left( \mu _{ n } \times \bGm \right) \right].
    \end{align}
    To see this, note that the actions of the subgroups
    \(
        \mu _{ n }
    \)
    and \( \bGm \) on \( \Spec R _{ 1 } [ \xi, \xi ^{ - 1 } ] \) are both free.
\end{proof}

The isomorphism of algebraic stacks~\eqref{equation:iso of tame algebraic stacks} induces an equivalence of categories of coherent algebras as follows.
\begin{align}\label{equation:equivalence of categories of coherent algebras}
    \Alg \left( \coh \left[ \Spec \Rtilde / \bGm \right] \right)
    \simeq
    \Alg \left( \coh \left[ \Spec R _{ 1 } / \mu _{ n } \right] \right).
\end{align}
Note that \( \Lambdatilde \) can be regarded as an object of the left hand side of~\eqref{equation:equivalence of categories of coherent algebras}, whereas \( \Lambda _{ 1 } \) can be regarded as an object of the right hand side. They correspond to each other under~\eqref{equation:equivalence of categories of coherent algebras}.
In particular, the \( \bZ \)-graded algebra \( \Lambdatilde \) and the \( \bZ / n \)-graded algebra
\(
    \Lambda _{ 1 }
\)
are ``Morita equivalent'' as follows.

\begin{proposition}\label{proposition:Z-graded vs Z/n-graded}
    There is an equivalence of categories
    \begin{align}
        \alpha \colon
        \module ^{ \bZ } \Lambdatilde
        \rightleftarrows
        \module ^{ \bZ / n } \Lambda _{ 1 } \colon \beta
    \end{align}
\end{proposition}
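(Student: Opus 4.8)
The plan is to obtain this ``Morita equivalence'' of graded module categories by transporting the equivalence~\eqref{equation:equivalence of categories of coherent algebras} of categories of coherent algebras to the corresponding categories of modules, using the standard description of sheaves on a quotient stack by a diagonalizable group.

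First I would record the dictionary: for a diagonalizable group scheme $G$ over $\bfk$ with character group $\widehat G$ and a $\widehat G$-graded $\bfk$-algebra $S$, there is a symmetric monoidal equivalence between $\coh[\Spec S/G]$ and the category of finitely generated $\widehat G$-graded $S$-modules, because a representation of $G$ is precisely a $\widehat G$-graded vector space (valid in any characteristic, since $G$ is linearly reductive with representation category the $\widehat G$-graded vector spaces). Applying this to $(G,\widehat G,S)=(\bGm,\bZ,\Rtilde)$ and to $(\mu_{n},\bZ/n,R_{1})$ — all the rings and modules in sight being noetherian and coherent by~\cref{corollary:Rtilde is of finite type} and the discussion preceding it — and then restricting the monoidal equivalences to the categories of modules over a fixed coherent algebra object, I identify $\module^{\bZ}\Lambdatilde$ with the category of coherent modules over $\Lambdatilde$ on $[\Spec\Rtilde/\bGm]$, and $\module^{\bZ/n}\Lambda_{1}$ with the category of coherent modules over $\Lambda_{1}$ on $[\Spec R_{1}/\mu_{n}]$.

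Next I would invoke~\cref{corollary:iso of tame algebraic stacks}: the isomorphism~\eqref{equation:iso of tame algebraic stacks} gives a symmetric monoidal equivalence $\coh[\Spec\Rtilde/\bGm]\simeq\coh[\Spec R_{1}/\mu_{n}]$ which, by the paragraph preceding the proposition, sends the algebra $\Lambdatilde$ to the algebra $\Lambda_{1}$. A symmetric monoidal equivalence carrying one algebra object to another induces an equivalence between the respective categories of modules, so composing with the two identifications of the previous paragraph produces the mutually quasi-inverse pair $\alpha\colon\module^{\bZ}\Lambdatilde\to\module^{\bZ/n}\Lambda_{1}$ and $\beta$ in the other direction. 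If desired, one can also write these functors down by hand following the proof of~\cref{lemma:Lambdatilde as an invariant ring}: since $ax^{n}\in\Lambdatilde_{n}$ is a central unit with inverse in $\Lambdatilde_{-n}$ and $\Lambda_{1}=\Lambdatilde/(1-ax^{n})\Lambdatilde$, one may take $\alpha(N)=N\otimes_{\Lambdatilde}\Lambda_{1}$, and conversely $\beta(M)=(M[\xi,\xi^{-1}])_{\overline{0}}$ for the $(\bZ/n\times\bZ)$-graded $\Lambda_{1}[\xi,\xi^{-1}]$-module $M[\xi,\xi^{-1}]$ with $\deg\xi=(\overline{-1},1)$; that these are quasi-inverse is the bookkeeping of~\cref{lemma:Lambdatilde as an invariant ring}, using the periodicity isomorphisms $N_{j}\cong N_{j+n}$ given by multiplication by $ax^{n}$ and the identity $\Lambdatilde_{j}=\Lambdatilde_{\ell}a^{q}$ for $j=\ell+qn$, $0\le\ell<n$.

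I do not expect a genuine obstacle here. The one point that needs a word of justification is that the graded/equivariant dictionary is compatible with tensor products — so that it is legitimate to pass from an equivalence of the ambient monoidal categories to an equivalence of module categories over corresponding algebra objects — but this is immediate because $\coh[\Spec S/G]\simeq\module^{\widehat G}S$ is manifestly symmetric monoidal for $G$ diagonalizable.
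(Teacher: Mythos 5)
Your proposal is correct and follows essentially the same route as the paper: the paper likewise deduces the equivalence from the fact that \( \Lambdatilde \) and \( \Lambda_{1} \) correspond to one another under the equivalence~\eqref{equation:equivalence of categories of coherent algebras} induced by~\cref{corollary:iso of tame algebraic stacks}, and then records explicit quasi-inverse functors (its \( \alpha(N) = \left( N \otimes_{\Lambdatilde} \Lambda_{1}[\xi,\xi^{-1}] \right)_{0} \) is naturally isomorphic to your \( N \otimes_{\Lambdatilde}\Lambda_{1} \), and its \( \beta \) is exactly yours). Your added remarks on the diagonalizable-group/graded-module dictionary and on monoidal equivalences preserving algebra objects only make explicit what the paper leaves implicit.
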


\begin{proof}
    The definitions of \( \alpha \) and \( \beta \) follow from the fact that \( \Lambdatilde \) and \( \Lambda _{ 1 } \) correspond to each other under the equivalence of categories~\eqref{equation:equivalence of categories of coherent algebras}.
    For the convenience of the reader, we write them down explicitly.

    For \( N \in \module ^{ \bZ } \Lambdatilde \)
    \begin{align}
        \alpha ( N )
        \coloneqq
        \left( N \otimes _{ \Lambdatilde } \Lambda _{ 1 } [ \xi, \xi ^{ - 1 } ] \right) _{ 0 }
        \in
        \module ^{ \bZ / n } \Lambda _{ 1 }.
    \end{align}
    Conversely, given
    \(
        M \in \module ^{ \bZ / n } \Lambda _{ 1 }
    \),
    \begin{align}
        \beta ( M ) \coloneqq
        \left( M [ \xi, \xi ^{ - 1 } ] \right) _{ \overline{ 0 } }
        \in
        \module ^{ \bZ } \left( \Lambda _{ 1 } [ \xi, \xi ^{ - 1 } ] \right) _{ \overline{ 0 } }
        \stackrel{\text{\cref{corollary:Rtilde as invariant ring}}}{ \simeq }
        \module ^{ \bZ } \Lambdatilde.
    \end{align}
\end{proof}

\begin{proposition}\label{proposition:Lambda1 * mun simeq End Lambda1}
  \[\Lambda_1 \ast \cO_{\mu_n}^\vee \simeq \End_\Lambda(\Lambda_1).\]
\end{proposition}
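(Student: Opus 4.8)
The plan is to realise this isomorphism as an instance of Hopf--Galois descent for the $\bZ/n$-grading on $\Lambda_1$. A $\bZ/n$-grading $\Lambda_1 = \bigoplus_{\overline{i} \in \bZ/n}(\Lambda_1)_{\overline{i}}$ is the same datum as an action of $\mu_n$ --- that is, an $\cO_{\mu_n}$-comodule algebra structure --- whose ring of coinvariants is the degree-$\overline{0}$ part $(\Lambda_1)_{\overline{0}} = \Lambda$, and $\Lambda_1 \ast \cO_{\mu_n}^\vee$ is precisely the corresponding smash product. There is a canonical homomorphism of $\bfk$-algebras $\Lambda_1 \ast \cO_{\mu_n}^\vee \to \End_\Lambda(\Lambda_1)$ which, on the underlying $\bfk$-module $\Lambda_1 \otimes_\bfk \cO_{\mu_n}^\vee$, sends $\lambda \otimes f$ to the endomorphism $m \mapsto \lambda\,(f \rightharpoonup m)$; it is well defined because left multiplications by elements of $\Lambda_1$ and the projections onto homogeneous components are right $\Lambda$-linear. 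By the theory of Hopf--Galois extensions~\cite{Montgomery} --- and because $\cO_{\mu_n}$ is finite-dimensional, so that the Kreimer--Takeuchi theorem applies and no extra projectivity hypothesis is required --- this map is an isomorphism as soon as $\Lambda \subseteq \Lambda_1$ is an $\cO_{\mu_n}$-Galois extension; and for a $\bZ/n$-graded ring this is equivalent to the ring being \emph{strongly graded}, i.e.\ $(\Lambda_1)_{\overline{i}}(\Lambda_1)_{\overline{j}} = (\Lambda_1)_{\overline{i}+\overline{j}}$ for all $\overline{i},\overline{j}$. So the whole content of the proposition is that $\Lambda_1$ is strongly graded; concretely, once that is known each $(\Lambda_1)_{\overline{i}}$ is an invertible $\Lambda$-bimodule and left multiplication identifies $\Hom_\Lambda((\Lambda_1)_{\overline{\ell}},(\Lambda_1)_{\overline{k}})$ with $(\Lambda_1)_{\overline{k}-\overline{\ell}}$, which exhibits the displayed map as a bijection.

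Since $(\Lambda_1)_{\overline{i}}(\Lambda_1)_{\overline{j}} \subseteq (\Lambda_1)_{\overline{i}+\overline{j}}$ always holds, it suffices to prove $1 \in (\Lambda_1)_{\overline{i}}(\Lambda_1)_{-\overline{i}}$ for every $\overline{i}$; for this I would analyse the multiplication map
\[
    \mu_i \colon (\Lambda_1)_{\overline{i}} \otimes_\Lambda (\Lambda_1)_{-\overline{i}} \longrightarrow (\Lambda_1)_{\overline{0}} = \Lambda,
\]
whose image is exactly the two-sided ideal $(\Lambda_1)_{\overline{i}}(\Lambda_1)_{-\overline{i}}$. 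Write $(\Lambda_1)_{\overline{i}} = I^{(i)}$ and $(\Lambda_1)_{-\overline{i}} = I^{(n-i)}$ with $I = \omega_\Lambda^{-1}$ (using the relation $x^n = a^{-1}$ in $\Lambda_1$). Each graded piece $I^{(j)}$ is a reflexive $\Lambda$-module, hence projective as a left and as a right $\Lambda$-module by~\cref{remark:gldim 2 implies FRT}; therefore $(\Lambda_1)_{\overline{i}} \otimes_\Lambda (\Lambda_1)_{-\overline{i}}$ is a direct summand of a finite direct sum of copies of $(\Lambda_1)_{\overline{i}}$, so it is projective over $\Lambda$ and in particular reflexive as an $R$-module. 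On the other hand $\mu_i$ is an isomorphism after $-\otimes_R K$, because there $(\Lambda_1)_{\overline{i}}$ and $(\Lambda_1)_{-\overline{i}}$ become mutually inverse invertible bimodules over the central simple algebra $A = \Lambda\otimes_R K$ (each is free of rank one, and $(I^{(i)}\otimes_R K)(I^{(n-i)}\otimes_R K) = I^{(n)}\otimes_R K = A$). Since the source of $\mu_i$ is torsion free over the domain $R$, it follows that $\mu_i$ is injective, so $(\Lambda_1)_{\overline{i}}(\Lambda_1)_{-\overline{i}} \cong (\Lambda_1)_{\overline{i}}\otimes_\Lambda(\Lambda_1)_{-\overline{i}}$ is a reflexive $R$-module.

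It remains to identify this reflexive $R$-module with $\Lambda$ itself. Reflexive hulls are detected in codimension one, and for each height-one prime $\frakp \subset R$ the localisation $I_\frakp$ is an invertible $\Lambda_\frakp$-bimodule by the definition of a divisorial ideal; hence $(\Lambda_1)_\frakp = \Lambda_\frakp[I_\frakp]_n$ is strongly graded, so that $\bigl((\Lambda_1)_{\overline{i}}(\Lambda_1)_{-\overline{i}}\bigr)_\frakp = \Lambda_\frakp$. Therefore the reflexive hull of the $R$-module $(\Lambda_1)_{\overline{i}}(\Lambda_1)_{-\overline{i}}$ is $\Lambda$. But a reflexive module coincides with its own reflexive hull, so $(\Lambda_1)_{\overline{i}}(\Lambda_1)_{-\overline{i}} = \Lambda$ and in particular $1 \in (\Lambda_1)_{\overline{i}}(\Lambda_1)_{-\overline{i}}$. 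This shows $\Lambda_1$ is strongly graded, which by the first paragraph completes the proof.

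I expect the main obstacle to be the passage, in the second and third paragraphs, from the cheap codimension-one identity $(\Lambda_1)_{\overline{i}}(\Lambda_1)_{-\overline{i}} = \Lambda$ to an honest equality of $R$-modules: a product of reflexive modules need not be reflexive, and a $\Lambda$-submodule of $\Lambda$ that agrees with $\Lambda$ away from the closed point of $\Spec R$ need not be all of $\Lambda$. What rescues the argument is exactly the hypothesis $\gldim\Lambda = 2$, which through~\cref{remark:gldim 2 implies FRT} forces the graded pieces of $\Lambda_1$ to be $\Lambda$-projective and hence makes the image of $\mu_i$ reflexive. The remaining ingredient --- that a strongly graded (equivalently, $\cO_{\mu_n}$-Galois) extension yields the stated isomorphism with $\End_\Lambda(\Lambda_1)$ --- is formal given~\cite{Montgomery}, modulo the bookkeeping between gradings, $\mu_n$-actions, and smash products indicated in the first paragraph.
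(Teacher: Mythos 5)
Your proof is correct, but it takes a genuinely different route from the paper's. The paper does not pass through the Hopf--Galois formalism for this step: it computes the smash product directly, using that \( \cO_{\mu_n}^\vee \simeq \cO_{\bZ/n} \) has a complete set of orthogonal idempotents \( e_0,\dots,e_{n-1} \) acting as projections onto graded components, so that \( \Lambda_1 \ast \cO_{\mu_n}^\vee = \bigoplus_i \Lambda_1 e_i \) with \( e_i d = d e_{i-j} \) for \( d \) of degree \( j \); it then matches \( e_i\Lambda_1 e_j = \omega_\Lambda^{(i-j)}e_j \) term by term with \( \End_\Lambda\bigl(\bigoplus_i \omega_\Lambda^{(i)}\bigr) = \bigoplus_{i,j}\Hom_\Lambda(\omega_\Lambda^{(i)},\omega_\Lambda^{(j)}) = \bigoplus_{i,j}\omega_\Lambda^{(j-i)} \), the last identification being a reflexivity-plus-codimension-one statement. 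You instead invoke Montgomery's Theorem 8.3.3 (Kreimer--Takeuchi) together with the Galois \( \Leftrightarrow \) strongly graded dictionary for \( kG \)-comodule algebras, and reduce everything to showing \( (\Lambda_1)_{\overline{i}}(\Lambda_1)_{-\overline{i}} = \Lambda \); your verification of this (graded pieces reflexive hence \( \Lambda \)-projective by \( \gldim\Lambda = 2 \), injectivity of the multiplication map from the generic fibre, reflexivity of the image, then comparison in codimension one) is sound, and correctly isolates where the gap between the cheap codimension-one identity and the honest equality is closed. Interestingly, your strategy is exactly the one the paper uses for the companion statement \( \Gamma \ast H \simeq \End_{\Lambda_1}(\Gamma) \) (where no grading trick is available), so your argument unifies the two propositions under one mechanism; what it buys is the structural fact that \( \Lambda \subset \Lambda_1 \) is \( \cO_{\mu_n} \)-Galois, i.e.\ \( \Lambda_1 \) is strongly graded, with projectivity of \( \Lambda_1 \) over \( \Lambda \) falling out of Kreimer--Takeuchi, whereas the paper's direct idempotent computation is more elementary, avoids verifying the Galois condition, and yields the explicit ``checkerboard'' presentation of the smash product. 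Both proofs ultimately rest on the same underlying reflexive/codimension-one comparison; the paper applies it to Hom-modules (which are automatically reflexive), while you apply it to products of graded pieces, where the \( \gldim = 2 \Rightarrow \) projective input is genuinely needed, as you note.
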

\begin{proof}  
  First note that \(\cO_{\mu_n} = k[t]/(t^n-1)\) with comultiplication
  \(\Delta(t) = t \otimes t\), and \( \Lambda_1\) is a \(\cO_{\mu_n}\)-comodule.
  Hence \( \Lambda_1\) is a \( \cO_{\mu_n}^\vee\)-module.  The Hopf algebra \(\cO_{\mu_n}^\vee \simeq \cO_{\bZ/n}\) and so there is a complete set of orthogonal idempotents \(e_0,\ldots,e_{n-1}\) which give a basis of \(\cO_{\bZ/n}\) with pairing \(e_i(t^j) = \delta_{ij}\).  Let \( \Lambda = \sum \lambda_i \in \Lambda_1\) be an element of \( \Lambda_1\) with its decomposition into graded components.  Then action of \(\cO_{\bZ/n}\) is described by \(e_i \lambda = \lambda_i\).
  Let \(d \in \Lambda_1\) be homogeneous of degree \(j\), then
  \[e_i(d\lambda) = d\lambda_{i-j} = d e_{i-j} \lambda\]
  hence we see that
  \[\Lambda_1 \ast\cO_{\mu_n}^\vee = \bigoplus \Lambda_1 e_i\]
  with relations \(e_i d = d e_{i-j}\) for all \(d \in \Lambda_1\) of degree \(j\)
  as in~\cite[Example 4.1.7]{Montgomery}.
  Hence
  \[e_i \Lambda_1 e_j = \omega_\Lambda^{(i-j)}e_j\]
  \[e_i \Lambda_1 e_j e_j \Lambda_1 e_\ell = \omega_\Lambda^{(i-j)}\omega_\Lambda^{(j-\ell)} e_\ell\]
  and so the multiplication in \( \Lambda_1 \ast \cO_{\mu_n}^\vee\) 
  is isomorphic to that of
  \[\End_\Lambda(\medoplus \omega_\Lambda^{(i)}) = \bigoplus_{i,j} \Hom_\Lambda(\omega_\Lambda^{(i)},\omega_\Lambda^{(j)})=\bigoplus_{i,j} \omega_\Lambda^{(j-i)}  .\]  
\end{proof}

\begin{corollary}\label{corollary:Lambda1 vs Lambda}
    There is an equivalence of categories
    \begin{align}
        \module ^{ \bZ / n } \Lambda _{ 1 }
        \simeq
        \module \Lambda.
    \end{align}
\end{corollary}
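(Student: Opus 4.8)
The plan is to combine the two preceding results into a chain of equivalences. By~\cref{proposition:Lambda1 * mun simeq End Lambda1} we have an isomorphism of algebras $\Lambda_1 \ast \cO_{\mu_n}^\vee \simeq \End_\Lambda(\Lambda_1)$, and $\cO_{\mu_n}^\vee \simeq \cO_{\bZ/n}$ is the dual Hopf algebra, so that modules over the smash product $\Lambda_1 \ast \cO_{\mu_n}^\vee$ are the same as $\bZ/n$-graded $\Lambda_1$-modules (this is the standard dictionary between comodule algebras over $\cO_{\mu_n}$ and their smash products with $\cO_{\mu_n}^\vee = \cO_{\bZ/n}$, cf.~\cite[Example~4.1.7]{Montgomery} and the surrounding discussion). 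Hence
\begin{align}
    \module^{\bZ/n} \Lambda_1 \simeq \module\!\left( \Lambda_1 \ast \cO_{\mu_n}^\vee \right) \simeq \module \End_\Lambda(\Lambda_1).
\end{align}

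It then remains to show $\module \End_\Lambda(\Lambda_1) \simeq \module \Lambda$, i.e.\ that $\Lambda_1$ is a progenerator of $\module \Lambda$, so that Morita theory applies with the bimodule $\Lambda_1$ and $\End_\Lambda(\Lambda_1)$. As a $\Lambda$-module, $\Lambda_1 = \medoplus_{0 \le i < n} \omega_\Lambda^{(i)}$ (by the description in the proof of~\cref{proposition:Lambda1 * mun simeq End Lambda1}, using that each graded piece $\Lambda_1{}_i$ is identified with the reflexive power $\omega_\Lambda^{(i)}$ of the divisorial ideal $\omega_\Lambda^{-1}$, up to the twist by the fixed element $a$). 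Each $\omega_\Lambda^{(i)}$ is a finitely generated reflexive $\Lambda$-module, hence projective by~\cref{remark:gldim 2 implies FRT} (where we invoke that $\Lambda$ is a tame order of global dimension $2$, so every reflexive module is projective); in particular $\Lambda_1$ is a finitely generated projective $\Lambda$-module. Moreover $\Lambda_1$ contains $\Lambda = \omega_\Lambda^{(0)}$ as a direct summand, so it is a generator of $\module \Lambda$. Thus $\Lambda_1$ is a progenerator and the Morita equivalence $\module \End_\Lambda(\Lambda_1) \simeq \module \Lambda$ follows, giving the desired equivalence $\module^{\bZ/n} \Lambda_1 \simeq \module \Lambda$.

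The main point requiring care — and the step I expect to be the crux — is the first equivalence, namely matching up the $\bZ/n$-grading on $\Lambda_1$-modules with the module structure over the smash product $\Lambda_1 \ast \cO_{\mu_n}^\vee$: one must check that the idempotent decomposition $1 = \sum_i e_i$ in $\cO_{\bZ/n}$ induces precisely the grading decomposition of a module, and that the twisted commutation relations $e_i d = d\, e_{i-j}$ for $d$ homogeneous of degree $j$ are exactly the compatibility between the action and the grading. This is where the Hopf-algebraic formalism of~\cite{Montgomery} is used, and all the substantive content has in fact already been extracted in the proof of~\cref{proposition:Lambda1 * mun simeq End Lambda1}; what remains here is to package it as an equivalence of module categories rather than an isomorphism of algebras. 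Once that is in place, the projectivity input from~\cref{remark:gldim 2 implies FRT} makes the Morita step routine.
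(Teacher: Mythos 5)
Your proposal is correct and follows essentially the same route as the paper: first identify $\module^{\bZ/n}\Lambda_1$ with modules over the smash product $\Lambda_1\ast\mu_n$, then invoke~\cref{proposition:Lambda1 * mun simeq End Lambda1} to pass to $\module\End_\Lambda(\Lambda_1)$, and finally conclude by Morita theory since $\Lambda_1$ is a projective $\Lambda$-module (reflexive, as $\gldim\Lambda=2$) containing $\Lambda$ as a direct summand. Your write-up merely makes the progenerator step more explicit than the paper does; there is no substantive difference.
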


\begin{proof}
    The equivalence is obtained as the composition of the following equivalences.
    \begin{align}
        \module ^{ \bZ / n } \Lambda _{ 1 }
        \simeq
        \module \Lambda _{ 1 } \ast \mu _{ n }
        \stackrel{ \text{ \cref{proposition:Lambda1 * mun simeq End Lambda1} } }{ \simeq }
        \module \End _{ \Lambda } \Lambda _{ 1 }
        \simeq
        \module \Lambda
    \end{align}
    The last equivalence follows from the fact that \( \Lambda \) has global dimension \( 2 \). In fact, \( \Lambda _{ 1 } \) as a \( \Lambda \)-module, contains \( \Lambda \) as a direct summand and is a projective module by~\cite[Theorem~1.10]{MR245572}.  Alternatively, see~\cite[Proposition~2.17~(2)\(\Rightarrow\)(3)]{MR3251829}.
\end{proof}

\begin{corollary}\label{corollary:Lambda1 has global dimension 2}
    \( \Lambda _{ 1 } \) has global dimension \( 2 \).
\end{corollary}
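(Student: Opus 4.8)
The plan is to show $\gldim\Lambda_1\le2$ and $\gldim\Lambda_1\ge2$ separately.

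For the lower bound, note that by~\cref{corollary:Lambda1 vs Lambda} the category $\module^{\bZ/n}\Lambda_1$ is equivalent to $\module\Lambda$, which has global dimension exactly $2$; thus $\Lambda_1$ has graded global dimension $2$. Since for a finitely generated graded module $M$ and any graded module $N$ one has $\operatorname{Ext}^k_{\Lambda_1}(M,N)=\bigoplus_{i\in\bZ/n}\operatorname{Ext}^k_{\module^{\bZ/n}\Lambda_1}(M,N(i))$, the projective dimension of a graded module is unchanged on forgetting the grading, whence $\gldim\Lambda_1\ge2$.

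For the upper bound, set $B:=\End_\Lambda(\Lambda_1)$. By the proof of~\cref{corollary:Lambda1 vs Lambda}, $\Lambda_1$ is a finitely generated projective generator of $\module\Lambda$, so $B$ is Morita equivalent to $\Lambda$ and $\gldim B=2$. By~\cref{proposition:Lambda1 * mun simeq End Lambda1} we may identify $B=\Lambda_1\ast\cO_{\mu_n}^\vee=\bigoplus_{i\in\bZ/n}\Lambda_1e_i$; in particular $\Lambda_1\hookrightarrow B$ is a subring over which $B$ is finitely generated and free, on either side. The main step is to show that $\Lambda_1$ is a direct summand of $B$ as a $\Lambda_1$-bimodule. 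Granting this, for any $\Lambda_1$-module $M$ the $B$-module $M\otimes_{\Lambda_1}B$ has a $B$-projective resolution of length $\le2$; restricting it along $\Lambda_1\subseteq B$ — under which $B$-projectives restrict to $\Lambda_1$-projectives, as $B$ is $\Lambda_1$-free — yields a $\Lambda_1$-projective resolution of $(M\otimes_{\Lambda_1}B)\vert_{\Lambda_1}$, and applying $M\otimes_{\Lambda_1}(-)$ to the bimodule retraction $B\twoheadrightarrow\Lambda_1$ exhibits $M$ as a $\Lambda_1$-summand of this module. Hence $\operatorname{pd}_{\Lambda_1}M\le2$, so $\gldim\Lambda_1\le2$.

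The one substantive point is therefore the $\Lambda_1$-bimodule retraction $B\twoheadrightarrow\Lambda_1$ of the inclusion, and I expect this to be the main obstacle. When $n$ is invertible in $\bfk$ — in particular whenever $\characteristic\bfk=0$ — it is furnished by $\sum_i\lambda_ie_i\mapsto\tfrac1n\sum_i\lambda_i$, which is $\Lambda_1$-bilinear by a direct check using the commutation relations $e_id=de_{i-j}$ for $d$ homogeneous of degree $j$, and the corollary follows at once. When $\characteristic\bfk$ divides $n$ the naive averaging is unavailable; one must instead exploit the finer structure of $\Lambda_1$, namely that by construction (\cref{item:root stack}) it is a maximal order that is Azumaya in codimension one over the normal surface $R_1=\zentrum(\Lambda_1)$, and reduce $\gldim\Lambda_1\le2$, in the spirit of~\cref{remark:gldim 2 implies FRT}, to the statement that every reflexive $\Lambda_1$-module is projective: such a module restricts to a reflexive, hence projective, $\Lambda$-module, and one descends projectivity along the cyclic cover $\Spec R_1\to\Spec R$. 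Everything preceding this characteristic-$p$ argument is formal.
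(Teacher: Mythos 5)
Your upper-bound strategy is the same as the paper's: the paper also passes to \( B \coloneqq \Lambda_1 \ast \mu_n \simeq \End_\Lambda(\Lambda_1) \), gets \( \gldim B = 2 \) from \cref{corollary:Lambda1 vs Lambda}, and descends the bound to \( \Lambda_1 \) using exactly the two inputs you isolate (\( B \) free over \( \Lambda_1 \), and \( \Lambda_1 \) a \( \Lambda_1 \)-bimodule direct summand of \( B \)), citing \cite[7.2.8]{MR1811901} where you write out the restriction/retraction argument by hand; your graded-Ext lower bound is a fleshed-out version of the paper's ``clearly at least \(2\)''. When \( n \) is invertible in \( \bfk \) your averaging retraction (\( c_i = \pi(e_i) = \tfrac1n \), which indeed satisfies \( c_i d = d c_{i-j} \) and \( \sum_i c_i = 1 \)) is correct, so in characteristic \( 0 \) — the Reiten--Van den Bergh setting — your proof is complete and essentially identical to the paper's.

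The gap is the case \( p \mid n \), which is precisely the new case this paper is written to cover (the \( \mu_n \)-extension need not be Galois). Your fallback there is not an argument: from a reflexive \( \Lambda_1 \)-module you obtain a module that is projective over the subring \( \Lambda \), but projectivity does not pass back up from \( \Lambda \) to \( \Lambda_1 \) by ``descent along the cyclic cover'' — no mechanism is given, and supplying one is exactly the content of the corollary (the equivalence of \cref{corollary:Lambda1 vs Lambda} only sees \( \bZ/n \)-graded modules, and an arbitrary reflexive \( \Lambda_1 \)-module carries no grading). Moreover your ``main step'' cannot be rescued when \( p \mid n \): a left-linear map \( \pi \colon B \to \Lambda_1 \) is determined by the elements \( c_i = \pi(e_i) \), right-linearity forces \( c_i d = d c_{i-j} \) for homogeneous \( d \) of degree \( j \), and being a retraction forces \( \sum_i c_i = 1 \). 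Taking \( d \) of degree \( 0 \) shows each \( c_i \) centralizes \( \Lambda \); after tensoring with \( K = Q(R) \) one has \( \Lambda_1 \otimes_R K \simeq A \otimes_K L \) with \( L = K[u]/(u^n - a) \) commutative (the degree-one piece is generically isomorphic to \( A \) as a bimodule via the reduced trace, with invertible central generator \( u \)), so \( c_i \in L \), and \( c_i u = u c_{i-1} \) forces all \( c_i \) equal, whence \( n c_0 = 1 \) — impossible if \( p \mid n \). So in that case the bimodule retraction does not exist, and a genuinely different argument (graded/local, or via \( \Lambda_1 \) being a tame reflexive Azumaya algebra over \( R_1 \)) is needed; note that the paper's own proof asserts this splitting in all characteristics without further justification, so the obstacle you identified is real and applies to it as well. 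In short: correct, and the same proof as the paper's, when \( p \nmid n \); a genuine gap, not a missing detail, when \( p \mid n \).
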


\begin{proof}
    By~\cref{corollary:Lambda1 vs Lambda}, since \( \Lambda \) has global dimension \( 2 \), the category
    \(
        \module ^{ \bZ / n } \Lambda _{ 1 }
    \)
    has global dimension \( 2 \). As
    \(
        \module ^{ \bZ / n } \Lambda _{ 1 }
        \simeq
        \module \Lambda _{ 1 } \ast \mu _{ n }
    \),
    the algebra \( \Lambda _{ 1 } \ast \mu _{ n } \) also has global dimension \(2\). 
    As \( \Lambda _{ 1 } \) is a direct summand of this algebra as a bimodule over itself and \( \Lambda _{ 1 } \ast \mu _{ n } \) is a free module over \( \Lambda _{ 1 } \), it follows from~\cite[7.2.8~Theorem~(i)]{MR1811901} that the global dimension of \( \Lambda _{ 1 } \) is at most \( 2 \). It is clearly at least \( 2 \), hence the conclusion.
\end{proof}

Let \( R _{ 1 } \) be the centre of \( \Lambda _{ 1 } \). It inherits the \( \bZ / n \)-grading of \( \Lambda _{ 1 } \).

Before stating the next result, we recall (and extend) the definition of linearly reductive quotient singularities.

\begin{definition}\label{df:linearly reductive quotient singularity}
    Let \( X \) be a normal surface over \( \bfk \). A closed point
    \( x \in X ( \bfk ) \) is a \emph{linearly reductive quotient singularity} if
    there is a finite linearly reductive group scheme \( H \) acting linearly on
    \(
        \bA ^{ 2 }
    \)
    such that
    \begin{align}
        \widehat{\cO}_{ X, x }
        \simeq
        \widehat{\cO} _{\bA ^{ 2 } / H, 0}.
    \end{align}
    By~\cite[Corollary~(2.6)]{MR268188}, this is equivalent to the existence of a smooth affine variety \( U \) over
    \(
        \bfk
    \), a finite linearly reductive group scheme \( H \) over
    \(
        \bfk
    \)
    acting on \( U \), and an \'etale morphism
    \begin{align}\label{equation: U / H to X}
        U / H \to X
    \end{align}
    whose image contains
    \(
        x
        \in
        X
    \).

    More generally, let \( X \) be a normal algebraic stack over a field \( \bfk \) with finite inertia and \(2\)-dimensional coarse space. We say a
    \(
        \bfk
    \)-rational point
    \(
        x \colon \Spec \bfk \to X
    \)
    is a linearly reductive quotient singularity if there exist
    \(
        H
        \curvearrowright
        U
    \)
    as above and a \emph{smooth} (not necessarily \'etale) morphism as in~\eqref{equation: U / H to X}
    whose image contains
    \(
        x
        \in
        X
    \).
\end{definition}
        
\begin{remark}\label{rmk:canonicalStackConstruction}
For schemes, linearly reductive quotient singularities are defined in terms of \'etale covers.
However, for stacky surfaces \( \cX \) we defined linearly reductive quotient singularities in terms of the existence of \emph{smooth} covers (since \'etale covers by schemes generally do not exist). We show that these two definitions are consistent, i.e.~if \(X \) is a surface and there is a smooth cover of the form \(U/H\) with \(U\) smooth and \(H\) finite linearly reductive, then there is an \'etale cover of this form.

If the characteristic of \( \bfk \) is at least 7, then for schemes of dimension \( 2 \), \(F\)-regular singularities are the same as linearly reductive quotient singularities by~\cite[Theorem~11.5]{2021arXiv210201067L}. Also, \(U\) is \(F\)-regular since it is smooth over a perfect field and since direct summands inherit \(F\)-regularity, \(U/H\) is strongly \(F\)-regular. Next, since smooth morphisms \'etale locally have sections, we may assume we have a smooth surjection \(U/H \to X \) with a section. The existence of such a section implies that \(\cO_X \) is a direct summand of the structure sheaf of \(U/H\), so \(X \) is strongly \(F\)-regular, hence has linearly reductive quotient singularities.

In characteristic \( 0 \), we can similarly argue by replacing ``\(F\)-regular'' with ``log terminal'' to conclude that, again in dimension 2, being a quotient singularity is a local property in smooth topology (see, say,~\cite[Proposition~2.15]{MR3057950}).

Unfortunately it is not clear to the authors if~\cite[Theorem~11.5]{2021arXiv210201067L} generalizes to singularities over perfect fields. Actually, this prevents them from applying the main arguments of this paper to not-necessarily algebraically closed fields. The issue lies in that the classification of canonical surface singularities is known only over algebraically closed fields.

For a general perfect field
\(
   \bfk
\),
we can still find a finite Galois extension
\(
   \bfk
   \subset
   L
\)
and present the base change of the original singularity to \(L\) as a quotient by a linearly reductive group scheme (over \(L\)). As we illustrate in~\cref{example:non-closed field}, in good cases the Galois group is linearly reductive and we can combine it with (a descent to \( \bfk \) of) the group scheme, so as to prove that the original singularity over \( \bfk \) is also a linearly reductive quotient singularity.
However, it is unclear if such an argument always works or not.
\end{remark}

\begin{example}\label{example:non-closed field}
    Consider the following singularity.
    \begin{align}
        \bfk
       & =
        \bF _{ 3 }\\
        R
       & =
        \bfk [ x, y, z ]
        /
        (
            z ^{ 3 } - x ^{ 2 } - y ^{ 2 }
        )
    \end{align}

    Note that
    \(
       R
    \)
    is not isomorphic to the \(A _{ 2 }\)-singularity unless we extend the base field
    \(
       \bfk
    \).
    On the other hand, in doing so we can find a presentation of
    \(
       R
    \)
    as a linearly reductive quotient singularity as follows.
    
    Put
    \begin{align}
        L
        =
        \bfk [ \sqrt{ - 1 } ]
        =
        \bfk [ i ] / ( i ^{ 2 } + 1 ),
    \end{align}
    so that
    \begin{align}
        R _{ L }
        =
        R \otimes _{ \bfk } L
        \simeq
        L [ x, y, z ]
        /
        \left(
            z ^{ 3 } - ( x + i y ) ( x - i y )
        \right).
    \end{align}
    It is an
    \(
       L
    \)-subalgebra of
    \(
       S
       =
       L [ u, v ]
    \)
    by the following map.
    \begin{align}
        x + i y
        &
        \mapsto
        u ^{ 3 }\\
        x - i y
        &
        \mapsto
        v ^{ 3 }\\
        z
        &
        \mapsto
        u v
    \end{align}
    The
    \(
       L
    \)-algebra
    \(
       S
    \)
    admits a
    \(
       \bZ / 3 \bZ
    \)-grading given by
    \begin{align}
        \deg u = \overline{1},
        \deg v = \overline{- 1}.
    \end{align}
    It corresponds to a
    \(
       \mu _{ 3 }
    \)-action on
    \(
       S
    \)
    and the homomorphism above identifies
    \(
       R _{ L }
    \)
    with the invariant subring
    \(
       S ^{ \mu _{ 3 } }
    \).

    On the other hand, the
    \(
       \bfk
    \)-algebra
    \begin{align}
        S
        \simeq
        \bfk [ u, v, i ]
        /
        (
            i ^{ 2 } + 1
        )
    \end{align}
    admits a
    \(
       \mu _{ 6 }
    \)-action corresponding to the
    \(
       \bZ / 6 \bZ
    \)-grading as follows.
    \begin{align}
        \deg u
        &
        =
        \overline{2}\\
        \deg v
        &
        =
        \overline{- 2}\\
        \deg i
        &
        =
        \overline{3}
    \end{align}

    We have the following short exact sequence of group schemes over
    \(
       \bfk
    \).
    \begin{align}
        1
        \to
        \mu _{ 3 }
        \to
        \mu _{ 6 }
        \to
        \Aut _{ \bfk } ( L )
        \to
        1
    \end{align}
    
    Indeed, the Galois group
    \(
        \Aut _{ \bfk } ( L )
        \simeq
        \bZ / 2 \bZ
    \)
    is generated by the automorphism
    \begin{align}
        i
        \mapsto
        - i
    \end{align}
    of
    \(
       L
    \)
    and it lifts to the action
    \(
       \mu _{ 6 }
       \curvearrowright
       S
    \)
    via
    \(
       \left(
           \mu _{ 6 }
       \right) _{ \mathrm{red} }
       \simto
       \Aut _{ \bfk } ( L )
    \).
    The map
    \(
        \mu _{ 3 }
        \to
        \mu _{ 6 }
    \)
    is the dual of the standard quotient map
    \(
       \bZ / 6 \bZ
       \twoheadrightarrow
       \bZ / 3 \bZ       
    \).
    The induced action
    \(
       \mu _{ 3 }
       \curvearrowright
       S
    \)
    coincides with the one above.
    Hence
    \(
       S ^{ \mu _{ 6 } }
       \simeq
       R
    \)
    and
    \(
       R
    \)
    is a linearly reductive quotient singularity over
    \(
       \bfk
    \).
\end{example}

Let us resume the analysis of
\(
   \Lambda _{ 1 }
\).

\begin{proposition}\label{proposition:R1 is SH}
There is a finite linearly reductive group scheme \( H \) and a linear action on
\(
    S \coloneqq \bfk [[ x, y ]]
\)
which is free on \( \Spec S \setminus \{ 0 \} \) such that 
\(
    R _{ 1 } = S ^{ H }
\).
\end{proposition}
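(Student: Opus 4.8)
The plan is to realize $R_1$ as the ring of invariants of a linearly reductive group scheme acting on a complete regular local ring of dimension $2$ by exhibiting $R_1$ as a linearly reductive quotient singularity in the sense of Definition~\ref{df:linearly reductive quotient singularity}, and then invoking the local structure theory of such singularities. First I would analyze $\Spec R_1$ as a scheme: by Corollary~\ref{corollary:Lambda1 has global dimension 2}, $\Lambda_1$ has global dimension $2$, and $\Lambda_1$ is a tame order over its centre $R_1$ (tameness survives the construction since it can be checked at height-one primes, where one uses the structure theorem for hereditary orders as in Remark~\ref{rmk:dualizingStuff}). Crucially, $\Lambda_1$ is Azumaya in codimension one — this is the whole point of passing from $\Lambda$ to $\Lambda_1$ via the dualizing-module twist, since the ramification of $\Lambda$ along a height-one prime $\frakp$ with index $e$ is ``absorbed'' by adjoining the $n$-th roots, so $R_1$ is étale-locally a cyclic cover branched along the ramification divisor. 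Hence $R_1$ is normal of dimension $2$, and because $\Lambda_1$ has finite global dimension and is Azumaya in codimension one, $R_1$ has at worst isolated singularities and $\gldim$ considerations force $R_1$ to have rational/F-regular singularities (in characteristic $p\geq 7$ one uses that $\Lambda_1$ finite gldim forces $R_1$ to be a splinter-type, hence strongly $F$-regular, singularity; in characteristic $0$ it is log terminal).

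The key step is then: a normal surface singularity that is $F$-regular (char $p \geq 7$), equivalently log terminal (char $0$), is a linearly reductive quotient singularity. This is exactly~\cite[Theorem~11.5]{2021arXiv210201067L} in positive characteristic and the classical statement (e.g.~\cite[Proposition~2.15]{MR3057950} together with the classification of log terminal surface singularities) in characteristic zero, both cited already in Remark~\ref{rmk:canonicalStackConstruction}. Since $\bfk$ is algebraically closed and $R_1$ is complete local, Definition~\ref{df:linearly reductive quotient singularity} then yields a finite linearly reductive group scheme $H$ over $\bfk$ and a linear $H$-action on $\bA^2$ with $\widehat{\cO}_{\bA^2/H, 0} \simeq R_1$; writing $S = \bfk[[x,y]]$ this gives $R_1 = S^H$. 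Freeness of the $H$-action on $\Spec S \setminus \{0\}$: after possibly shrinking $H$ to the subgroup scheme generated by the ``pseudoreflection-free'' part, one arranges that $H$ acts freely in codimension one; but in fact for a linearly reductive quotient singularity presented minimally the action is automatically free on the punctured spectrum, since any quasi-reflections would be absorbed (a linearly reductive group scheme generated by quasi-reflections acting on $\bA^2$ has smooth quotient, so one quotients those out first and the resulting $H$ acts freely away from the origin). One must check this reduction is compatible with linear reductivity, which it is since subgroup schemes and quotients of finite linearly reductive group schemes are linearly reductive.

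The main obstacle I expect is establishing that $R_1$ has $F$-regular (respectively log terminal) singularities — i.e.~bridging from the purely noncommutative-algebraic input (finite global dimension of the tame order $\Lambda_1$, Azumaya in codimension one) to the birational-geometry property of its centre. The cleanest route is probably to argue étale-locally: at a height-one prime the structure theorem for hereditary orders describes $\Lambda_1$ and $R_1$ explicitly, showing $R_1 \to R$ is (étale-locally) the cyclic cover associated to the ramification $\bQ$-divisor $D$ of $\Lambda$ with standard coefficients; since $\Lambda$ having global dimension $2$ forces the pair $(X,D)$ (equivalently $(\Spec R, D)$) to have the right class of singularities, the cyclic cover $R_1$ inherits it (log terminal/F-regular is stable under such index-one covers, cf.~the canonical cover construction). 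I would isolate this as a lemma, using the dualizing-bimodule formula $\omega_\Lambda \simeq \omega_R \otimes \rad\Lambda_\frakp^{-1}$ from Remark~\ref{rmk:dualizingStuff} to identify the branch divisor precisely, and then quote the stability of $F$-regularity/log terminality under taking the associated cyclic cover. The remaining points — normality, isolatedness of the singularity, and that the presenting group can be taken to act freely on the punctured spectrum — are comparatively routine once the singularity type is pinned down.
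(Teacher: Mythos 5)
There is a genuine gap, and it sits exactly where you flagged ``the main obstacle'': you never actually establish that \( R_1 \) is strongly \( F \)-regular (resp.\ log terminal), and the route you sketch is essentially circular. You propose to deduce it from the claim that \( \gldim \Lambda = 2 \) forces the pair \( (\Spec R, D) \) with standard coefficients to be klt/\( F \)-regular, and then transfer this across the cyclic-type cover \( R \to R_1 \). But that claim is precisely the kind of statement that is only available in the literature in the characteristic-zero complete local setting (Reiten--Van den Bergh), and the whole point of this section is to redo that theory in characteristic \( p \ge 7 \); assuming it here is assuming what is to be proved. You also need, and do not verify, that \( R_1 \) really is the stated cover of \( (\Spec R, D) \) and that \( F \)-regularity descends/ascends along it in the relevant direction. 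A strong warning sign is that your argument never uses the hypothesis that \( R \) is a direct summand of \( \Lambda \) -- the paper explicitly remarks that this proposition is the \emph{only} place that assumption is used, so a proof that ignores it is almost certainly missing the key mechanism.

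The paper's actual route is different and avoids any klt statement about the pair: since \( R \) is a direct summand of \( \Lambda \) and \( \Lambda_1 \simeq (\Lambda \otimes_R R_1)^{\vee\vee} \), the centre \( R_1 \) is a direct summand of \( \Lambda_1 \), hence of finite representation type; this gives finite divisor class group, so \( R_1 \) is \( \bQ \)-Gorenstein. One then passes to the index-one cover \( \Spec A \to \Spec R_1 \), which again has finite class group, hence is a rational singularity (Lipman), and being Gorenstein it is canonical; in characteristic \( 0 \) or \( p \ge 7 \) canonical surface singularities are linearly reductive quotient singularities. This exhibits \( R_1 \) as a direct summand of \( \bfk[[x,y]] \), hence strongly \( F \)-regular, and only then does one invoke \cite[Theorem~11.5]{2021arXiv210201067L} to present \( R_1 \) as \( S^H \). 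Your final step (killing the subgroup generated by pseudo-reflections via \cite[Proposition~2.10, Theorem~1.9]{MR2950159} to make the action free on the punctured spectrum) does match the paper and is fine; it is the bridge from the noncommutative input to \( F \)-regularity that your proposal leaves unproved.
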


\begin{proof}
    By~\cite[Proposition~5.1]{MR978602}, \( \Lambda _{ 1 } \) is reflexive Azumaya and tame. In particular, it follows that
    \begin{align}\label{equation:Lambda1 as reflexive pull back of Lambda}
        \Lambda _{ 1 } \simeq \left( \Lambda \otimes _{ R } R _{ 1 } \right) ^{ \vee \vee }.
    \end{align}
    Since we assume in~\cref{situation:local} that \( R \) is a direct summand of \( \Lambda \), it follows from~\eqref{equation:Lambda1 as reflexive pull back of Lambda} that \( R _{ 1 } \) is a direct summand of \( \Lambda _{ 1 } \). Hence it is of finite representation type. In particular, the divisor class group of \( R _{ 1 } \) is finite and hence \( R _{ 1 } \) is \( \bQ \)-Gorenstein. We let
    \(
        \Spec \Delta
        \to
        \Spec R _{ 1 }
    \)
    denote an index 1 cover of \( \Spec R _{ 1 } \), so that \( \Delta \) is \( \bZ / r \)-graded for some \( r > 0 \) and
    \(
        R _{ 1 }
        =
        \Delta
        _{ \overline{ 0 } }
    \)
    (\( \Delta \) is nothing but the algebra \( \Lambda _{ 1 } \) as defined in~\eqref{equation:definition of Lambda1} for \( \Lambda = R _{ 1 } \)). It follows that the divisor class group of \( \Spec \Delta \) is also finite. Hence by~\cite[Proposition~(17.3)]{MR276239}, it is a rational singularity. Since \( \Spec \Delta \) is also Gorenstein, it follows that \( \Spec \Delta \) is a canonical singularity. Therefore, since we assume that the characteristic is \( 0 \) or \( p \ge 7 \),
    \cite[Proposition~4.2]{MR3161094} implies that
    \(
        \Spec \Delta
    \)
    is a linearly reductive quotient singularity.

    Thus we have confirmed that \( R _{ 1 } \) is a direct summand of the ring of formal power series in \( 2 \)-variables, so that it is strongly \( F \)-regular. Hence \( R _{ 1 } \) is a linearly reductive quotient singularity by~\cite[Theorem~11.5~(3)\(\Rightarrow\)(2)]{2021arXiv210201067L}. Let \( N \triangleleft H \) be the subgroup scheme generated by pseudo-reflections, see~\cite[Definition 1.2]{MR2950159}. Replacing \( H \) with \( H / N \) and \( \bA ^{ 2 } \) with
    \(
        \bA ^{ 2 } / N \simeq \bA ^{ 2 },
    \)
    by Proposition 2.10 of (loc.~cit.), we may assume \(H\) acts on \(\bA ^{ 2 }\) without pseudo-reflections. Then Theorem 1.9 of (loc.~cit.) shows that the action of \( H \) on \( \Spec S \) is free outside the origin and \( R _{ 1 } \simeq S ^{ H } \).
\end{proof}
\begin{remark}
  \cref{proposition:R1 is SH} is the only proof where we use the assumption that \( R \) is a direct summand.
\end{remark}

Take any
\( H \curvearrowright \Spec S \)
as in~\cref{proposition:R1 is SH}.
Put
\begin{align}
    \Gamma
    \coloneqq
    \Lambda _{ 1 } \overline{ \otimes } _{ R _{ 1 } } S
    \stackrel{\text{\cite[p.~4]{MR978602}}}{\coloneqq} \left( \Lambda _{ 1 } \otimes _{ R _{ 1 } } S \right) ^{ \ast \ast }.
\end{align}

The algebra \( \Gamma \) inherits the left \( H \)-action on \( \Spec S \); i.e., it is a right comodule algebra over the Hopf algebra \( \cO _{ H } \) by the coaction map
\begin{align}
    \rho \colon \Gamma \to \Gamma \otimes _{ \bfk } \cO _{ H }.
\end{align}
Dually, \( \Gamma \) is a left module algebra over the dual Hopf algebra \( \cO _{ H } ^{ \vee } \) by the following action map, which corresponds to \( \rho \) under the obvious adjunction.
\begin{align}
    \cO _{ H } ^{ \vee } \otimes _{ \bfk } \Gamma \to \Gamma
\end{align}

The relationship among the relevant algebras is summarized in the following commutative diagram.
\begin{figure}[H]\label{equation:the local diagram}
    \centering
    \includegraphics[scale=1.2]{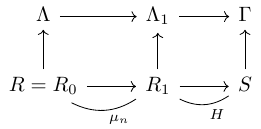}
\end{figure}



\begin{proposition}\label{proposition:Gamma * H simeq End Gamma}
    There is an isomorphism of algebras as follows.
    \begin{align}
        \Gamma \ast H \simeq \End _{ \Lambda _{ 1 } } \left( \Gamma \right)
    \end{align}
\end{proposition}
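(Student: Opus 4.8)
The plan is to exhibit a natural algebra homomorphism $\Phi\colon \Gamma \ast H \to \End_{\Lambda_1}(\Gamma)$ and to prove it is an isomorphism by passing to codimension one, where the $H$-action becomes free and the assertion reduces to the classical structure theorem for faithfully flat Hopf--Galois extensions, tensored with $\Lambda_1$.

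First I would build $\Phi$. Left multiplication of $\Gamma$ on itself commutes with the right $\Lambda_1$-action, giving an algebra map $\Gamma \to \End_{\Lambda_1}(\Gamma)$. The $\cO_H^\vee$-module algebra structure on $\Gamma$ (dual to the coaction $\rho\colon \Gamma \to \Gamma \otimes_\bfk \cO_H$) fixes the subalgebra $\Lambda_1 \subseteq \Gamma$, because $\Lambda_1 \otimes_{R_1} 1$ lies in the $\cO_H$-coinvariants of $\Lambda_1 \otimes_{R_1} S$ (as $R_1 = S^H$), and since $H$ acts by algebra automorphisms the action map $\cO_H^\vee \otimes_\bfk \Gamma \to \Gamma$ is then right $\Lambda_1$-linear, i.e. factors through $\End_{\Lambda_1}(\Gamma)$. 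The module-algebra axiom shows that these two maps assemble into an algebra homomorphism $\Phi$ out of the smash product $\Gamma \ast H$; this is the canonical map of Hopf--Galois theory (cf.~\cite{Montgomery}), twisted by $\Lambda_1$.

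Next I would reduce to codimension one. Both sides are reflexive $R_1$-modules: $\Gamma \ast H \cong \Gamma \otimes_\bfk \cO_H^\vee$ is finite free over the reflexive $R_1$-module $\Gamma$, while $\End_{\Lambda_1}(\Gamma)=\Hom_{\Lambda_1}(\Gamma,\Gamma)$ is the kernel of a map of reflexive $R_1$-modules (imposing $\Lambda_1$-linearity inside $\Hom_{R_1}(\Gamma,\Gamma)$), hence reflexive since $R_1$ is a $2$-dimensional normal domain. Thus $\Phi$ is an isomorphism if and only if $\Phi_\frakp$ is one for every height one prime $\frakp \subset R_1$. Fix such a $\frakp$; since $\frakp$ is not the maximal ideal, the corresponding point lies over the locus where $H$ acts freely on $\Spec S$, so with $R' \coloneqq (R_1)_\frakp$ and $S' \coloneqq S \otimes_{R_1} R'$ the morphism $\Spec S' \to \Spec R'$ is an $H$-torsor; in particular $S'$ is finite locally free over the discrete valuation ring $R'$ and $S'/R'$ is $\cO_H$-Galois. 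Since reflexive hull commutes with the flat base change $(-)\otimes_{R_1} R'$ and $S'$ is free over $R'$, we get $\Gamma \otimes_{R_1} R' \cong \Lambda_1' \otimes_{R'} S'$ with $\Lambda_1' \coloneqq \Lambda_1 \otimes_{R_1} R'$, and here $S'$ is central. As $H$ acts only on the central factor $S'$ we obtain $(\Gamma \otimes_{R_1} R')\ast H \cong \Lambda_1' \otimes_{R'}(S'\ast H)$, and since $S'$ is finite locally free over $R'$ there is a canonical isomorphism $\End_{\Lambda_1'}(\Lambda_1' \otimes_{R'} S') \cong \Lambda_1' \otimes_{R'} \End_{R'}(S')$. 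Tracing through the definitions, $\Phi_\frakp$ becomes $\mathrm{id}_{\Lambda_1'} \otimes \beta$, where $\beta\colon S' \ast H \to \End_{R'}(S')$ is the canonical map, which is an isomorphism by the structure theorem for faithfully flat Hopf--Galois extensions~\cite{Montgomery}. Hence $\Phi_\frakp$, and therefore $\Phi$, is an isomorphism.

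The main obstacle is the codimension-one step: one must (i) identify the fppf quotient of the free locus of $\Spec S$ by the finite linearly reductive group scheme $H$ with $\Spec R_1$ minus its closed point, so that $\Spec S' \to \Spec R'$ genuinely is an $H$-torsor (equivalently, that $S'/R'$ is faithfully flat and $\cO_H$-Galois), and (ii) invoke the correct form of ``$S' \# \cO_H^\vee \cong \End_{R'}(S')$'' for such extensions together with its $\Lambda_1$-linear variant. By comparison, verifying that $\Phi$ respects the smash-product multiplication and the reflexivity bookkeeping used to globalize from codimension one are routine.
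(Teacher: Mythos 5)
Your proposal is correct, and it reaches the statement by a route that genuinely differs from the paper's. The paper does not construct the canonical map and check it directly; instead it invokes \cite[THEOREM~8.3.3, 1)\(\Rightarrow\)2)]{Montgomery}, reducing the proposition to two hypotheses: that \( \Lambda _{ 1 } = \Gamma ^{ \co \cO _{ H } } \) and that the Galois map \( \Gamma \otimes _{ \Lambda _{ 1 } } \Gamma \to \Gamma \otimes _{ \bfk } \cO _{ H } \) is bijective. Both are then verified in codimension one, exactly where you also work, using the fact (from~\cref{proposition:R1 is SH}) that on punctured spectra \( \Spec S \setminus \{ 0 \} \to \Spec R _{ 1 } \setminus \{ \frakm \} \) is the quotient by the free \( H \)-action, i.e.\ an \( H \)-torsor; but to justify checking the Galois map only in codimension one the paper needs its source \( \Gamma \otimes _{ \Lambda _{ 1 } } \Gamma \) to be free over \( S \), which it gets from the projectivity of \( \Gamma \) as a right \( \Lambda _{ 1 } \)-module, i.e.\ from \( \gldim \Lambda _{ 1 } = 2 \) (\cref{corollary:Lambda1 has global dimension 2} together with~\cite{MR245572}). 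Your argument sidesteps both of these inputs: well-definedness of \( \Phi \colon \Gamma \ast H \to \End _{ \Lambda _{ 1 } } ( \Gamma ) \) only needs the inclusion \( \Lambda _{ 1 } \subseteq \Gamma ^{ \co \cO _{ H } } \), which holds by construction, and since both sides are reflexive \( R _{ 1 } \)-modules the isomorphism follows from the height-one computation, where \( \Gamma _{ \frakp } \simeq \Lambda _{ 1, \frakp } \otimes _{ R' } S' \) splits \( \Phi _{ \frakp } \) as \( \id \otimes \left( S' \ast H \simto \End _{ R' } ( S' ) \right) \), the latter being the commutative (torsor) case of the same Hopf--Galois theorem. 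So the two proofs rest on the same geometric fact in codimension one, but yours trades the verification of the coinvariant and Galois hypotheses (and the global-dimension-\(2\) input) for an explicit splitting plus a reflexivity argument over \( R _{ 1 } \), while the paper's route records along the way that \( \Gamma ^{ \co \cO _{ H } } = \Lambda _{ 1 } \) and that \( \Lambda _{ 1 } \subset \Gamma \) is \( \cO _{ H } \)-Galois. Your flagged obstacle (i) is exactly what~\cref{proposition:R1 is SH} supplies (free action off the origin and \( R _{ 1 } = S ^{ H } \), whence the punctured quotient map is an fppf torsor), so it is not an additional gap.
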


\begin{proof}
    By~\cite[THEOREM~8.3.3~1)\(\Rightarrow\)2)]{Montgomery}, it is enough to show that
    \begin{align}\label{equation:Lambda1 as invariant subring}
        \Lambda _{ 1 } = \Gamma ^{ H } \coloneqq \Gamma ^{ \cO _{ H } ^{ \vee } } = \Gamma ^{ \co \cO _{ H } },
    \end{align}
    where the last term denotes the subalgebra of coinvariants (\cite[Definition~1.7.1~1)]{Montgomery}),
    and that
    \(
        \Lambda _{ 1 } \to \Gamma
    \)
    is a right \( \cO _{ H } \)-Galois extension in the sense of~\cite[Definition~8.1.1]{Montgomery}; i.e., the following map is an isomorphism.

    \begin{align}\label{equation:Gamma is H-Galois over Lambda1}
        \beta \colon \Gamma \otimes _{ \Lambda _{ 1 } } \Gamma \to \Gamma \otimes _{ \bfk } \cO _{ H }; \quad a \otimes b \mapsto ( a \otimes 1 ) \rho ( b )
    \end{align}

    Note that both \( \Gamma \) and \( \Lambda _{ 1 } \) are reflexive \( R _{ 1 } \)-modules. So is \( \Gamma ^{ \cO _{ H } ^{ \vee } } \), as it is a direct summand of \( \Gamma \) as an \( R _{ 1 } \)-module for the reason that \( H \) is linearly reductive. Therefore the equality~\eqref{equation:Lambda1 as invariant subring} can be checked in codimension one over \( R _{ 1 } \).

    Similarly, note that \( \Gamma \) is a free module over \( S \), hence so is the right hand side of~\eqref{equation:Gamma is H-Galois over Lambda1}. On the other hand, since \( \Lambda _{ 1 } \) has right global dimension \( 2 \) by~\cref{corollary:Lambda1 has global dimension 2} and \( \Gamma \) is a reflexive \( R _{ 1 } \)-module by construction, it is a projective right \( \Lambda _{ 1 } \)-module by~\cite[Theorem~1.10]{MR245572} or~\cite[Proposition~2.17~(2)\(\Rightarrow\)(3)]{MR3251829}. Hence the left hand side of~\eqref{equation:Gamma is H-Galois over Lambda1} is a projective right \( \Gamma\)-module. In particular, it is a free \( S \)-module. Therefore it is enough to show \( \beta \) is an isomorphism in codimension \( 1 \) over \( S \).

    As \( R _{ 1 } = S ^{ H } \) and the action \( H \curvearrowright \Spec S \) is free outside the origin, both assertions hold in codimension \( 1 \). To see this, let \( X, Y \) be the punctured spectra of \( S \) and \( R _{ 1 } \), respectively, so that the natural map \( \pi \colon X \to Y \) is isomorphic to the quotient map \( X \to \left[ X / H \right] \). For an \( S \)-module \( M \), put \( M ^{ \circ } \coloneqq M \vert _{ X } \) and so on (similarly for \( R _{ 1 } \)-modules). By construction, it then follows that \( \pi ^{ \ast } \Lambda _{ 1 } ^{ \circ } = \Lambda _{ 1 } ^{ \circ } \otimes _{ \cO _{ Y } } \cO _{ X } \simeq \Gamma ^{ \circ } \) and
    \(
        \left( \pi _{ \ast } \Gamma ^{ \circ } \right) ^{ H } \simeq \Lambda _{ 1 } ^{ \circ }
    \), hence~\eqref{equation:Lambda1 as invariant subring} holds in codimension \( 1 \) over \( \Spec R _{ 1 } \).
    On the other hand, it follows from
    \(
        Y \simeq \left[ X / H \right]
    \)
    that the natural map
    \begin{align}
        \cO _{ X } \otimes _{ \cO _{ Y } } \cO _{ X } \to \cO _{ X } \otimes _{ \bfk } \cO _{ H }
    \end{align}
    is an isomorphism. Applying
    \(
        \Lambda _{ 1 } ^{ \circ } \otimes _{ \cO _{ Y } } -
    \)
    to this, we see that~\eqref{equation:Gamma is H-Galois over Lambda1} is an isomorphism in codimension \( 1 \) over \( \Spec S \).
\end{proof}

\begin{corollary}\label{corollary:Gamma vs Lambda1}
    There is an equivalence of categories as follows.
    \begin{align}
        \module ^{ H } \Gamma \simeq \module \Lambda _{ 1 }
    \end{align}
\end{corollary}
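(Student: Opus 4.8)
The plan is to run the same argument used for~\cref{corollary:Lambda1 vs Lambda}, with the pair $(\mu_n,\Lambda_1)$ replaced by $(H,\Gamma)$. Namely, I would exhibit the desired equivalence as the composite
\begin{align}
    \module ^{ H } \Gamma
    \simeq
    \module \left( \Gamma \ast H \right)
    \stackrel{ \text{\cref{proposition:Gamma * H simeq End Gamma}} }{ \simeq }
    \module \End _{ \Lambda _{ 1 } } ( \Gamma )
    \simeq
    \module \Lambda _{ 1 }.
\end{align}
The middle equivalence is exactly~\cref{proposition:Gamma * H simeq End Gamma}, so only the two outer equivalences require comment.

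For the left-hand equivalence, recall that $\Gamma$ is a right $\cO_H$-comodule algebra, so an object of $\module^H\Gamma$ is a finitely generated right $\Gamma$-module equipped with a compatible $\cO_H$-comodule structure — equivalently, since $H$ is a finite group scheme, a compatible left $\cO_H^\vee$-module structure. By the standard identification of equivariant modules over a comodule algebra with modules over the smash product (see~\cite[Section~4.1]{Montgomery}), this category is equivalent to $\module(\Gamma\ast H)$; this is the same mechanism already invoked in~\cref{corollary:Lambda1 vs Lambda} for $\mu_n$. For the right-hand equivalence I would appeal to Morita theory: it suffices to check that $\Gamma$ is a finitely generated projective generator in $\module\Lambda_1$. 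Finite generation over $\Lambda_1$ holds because $S$ is finite over $R_1=S^H$, and projectivity over $\Lambda_1$ was already established inside the proof of~\cref{proposition:Gamma * H simeq End Gamma} (from $\gldim\Lambda_1=2$, via~\cref{corollary:Lambda1 has global dimension 2}, together with reflexivity of $\Gamma$ over $R_1$ and~\cite[Theorem~1.10]{MR245572}). That $\Gamma$ is a generator follows from linear reductivity of $H$: the Reynolds operator $\Gamma\to\Gamma^H=\Lambda_1$ is a $\Lambda_1$-bimodule retraction of the inclusion $\Lambda_1\hookrightarrow\Gamma$ — it is left and right $\Lambda_1$-linear because $H$ acts by $\bfk$-algebra automorphisms fixing $\Lambda_1$ pointwise — so $\Lambda_1$ is a direct summand of $\Gamma$ as a right $\Lambda_1$-module, whence $\Gamma$ generates $\module\Lambda_1$.

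I do not expect a genuine obstacle here. The substantive input, namely the $\cO_H$-Galois identification $\Gamma\ast H\simeq\End_{\Lambda_1}(\Gamma)$, has already been carried out in~\cref{proposition:Gamma * H simeq End Gamma}, and the homological input is~\cref{corollary:Lambda1 has global dimension 2}. The only mildly delicate point is to set up the equivalence $\module^H\Gamma\simeq\module(\Gamma\ast H)$ correctly for a finite, possibly non-\'etale, linearly reductive group scheme $H$ acting on a noncommutative algebra, but this is routine Hopf-module bookkeeping and is exactly parallel to the $\mu_n$-case handled earlier in~\cref{corollary:Lambda1 vs Lambda}.
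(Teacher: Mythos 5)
Your argument is correct and is essentially the paper's own proof: the paper proves~\cref{corollary:Gamma vs Lambda1} by running the composite $\module^H\Gamma\simeq\module(\Gamma\ast H)\simeq\module\End_{\Lambda_1}(\Gamma)\simeq\module\Lambda_1$ exactly as in~\cref{corollary:Lambda1 vs Lambda}, with~\cref{proposition:Gamma * H simeq End Gamma} supplying the middle step and~\cref{corollary:Lambda1 has global dimension 2} (plus reflexivity of $\Gamma$ and the direct-summand property from linear reductivity of $H$) supplying the progenerator input for the last step. Your write-up merely spells out these same ingredients in more detail.
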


\begin{proof}
    The proof is parallel to that of~\cref{corollary:Lambda1 vs Lambda}, where~\cref{proposition:Lambda1 * mun simeq End Lambda1} is replaced with~\cref{proposition:Gamma * H simeq End Gamma}. The argument similar to the last three lines of the proof of~\cref{corollary:Lambda1 vs Lambda} works here as well, as \( \Lambda _{ 1 } \) has global dimension \( 2 \) by~\cref{corollary:Lambda1 has global dimension 2}.
\end{proof}

%
%
\section{The root stack \( (\cX _{ r }, \cA _{ r }) \)}\label{section:root stack}

Let \( ( X, \cA )\) be as in~\cref{definition:nc-surface}. In this section we construct an algebraic stack
\(
    r \colon \cX _{ r } \to X
\)
over \( X \) and an order \( \cA _{ r } \) on \( \cX _{ r } \), which is Azumaya in codimension \( 1 \). Furthermore, this stack is \emph{tame} in the sense of~\cite{MR2427954}. Tame stacks are algebraic stacks whose stabilizers are finite linearly reductive group schemes, see~\cite[Theorem 3.2]{MR2427954}; note that this includes examples with \(\mu_p\)-stabilizers in characteristic \(p\) and hence tame stacks are more general than tame Deligne--Mumford stacks (i.e.~Deligne--Mumford stacks whose stabilizers have order prime to the characteristic). Although tame stacks are Artin stacks in general, they enjoy many of the same properties of tame Deligne--Mumford stacks, e.g.~they have a coarse space whose formation commutes with \emph{arbitrary} base change, and the push-forward functor to the coarse space is exact.

Consider the \( \bZ \)-graded \( \cA \)-algebra \( \cAtilde \) which is defined as follows, where
\(
    \omega _{ \cA }
    =
    \cHom _{ X } \left( \cA, \omega _{ X }, \right)
\)
is the dualizing bimodule of \( \cA \) (see~\eqref{equation:dualizing bimodule of cA} for the definition) and
\begin{align}
    \omega _{ \cA  } ^{ - 1 } \coloneqq \cHom _{ \cA } \left( \omega _{ \cA }, \cA \right)
\end{align}
is the dual \( \cA \)-bimodule of \( \omega _{ \cA } \).
The symbol \( { ( - ) } ^{ ( j ) }\) denotes the reflexive hull of \( ( - ) ^{ \otimes j } \) (see~\cref{definition:Rees algebra construction} for the local version).
\begin{align}
    \cAtilde
    \coloneqq
    \cA [ \omega _{ \cA } ^{ - 1 } ]
    =
    \bigoplus _{ j \in \bZ }
    \left(
        \omega _{ \cA } ^{ - 1 }
    \right)
    ^{ ( j ) }
\end{align}

The centre \( \zentrum ( \cAtilde ) \) is a \( \bZ \)-graded \( \cO _{ X } \)-algebra and hence
\begin{align}\label{eqn:Xtilde}
    \Xtilde \coloneqq \Spec _{ X } \zentrum ( \cAtilde )
\end{align}
is equipped with a \( \bG _{ m } \)-action. Put
\begin{align}\label{equation:cXr}
    \cX _{ r } \coloneqq \left[ \Xtilde / \bG _{ m } \right].
\end{align}
Note that the natural morphism
\begin{align}
    r \colon \cX _{ r } \to X
\end{align}
is the coarse moduli map.

\begin{lemma}\label{lemma:Xtilde is of finite type over X}
    \( \zentrum ( \cAtilde )\) is of finite type over \( \cO _{ X }\) and \( \cAtilde \) is coherent as an \( \cO _{ \Xtilde } \)-module.
\end{lemma}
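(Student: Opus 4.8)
The plan is to reduce both assertions, which are Zariski-local on \( X \), to a commutative finiteness statement modeled on \cref{corollary:Rtilde is of finite type}, using the completed local pictures of \cref{section:Reiten-Van den Bergh} only to supply one geometric input. So I would first assume \( X \) is affine, put \( A \coloneqq \cA \otimes \bfk ( X ) \), and record that the formation of \( \omega _{ \cA } \), of the reflexive powers \( ( \omega _{ \cA } ^{ - 1 } ) ^{ ( i ) } \), and hence of \( \cAtilde \) and \( \zentrum ( \cAtilde ) \), commutes with completion at a closed point \( x \in X \): reflexive hulls commute with the flat base change \( \cO _{ X, x } \to \widehat{ \cO } _{ X, x } \) (all rings in sight are normal and excellent), and dualizing bimodules commute with formal neighborhoods by \cref{remark:bimodule}. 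After such a completion we are in \cref{situation:local}.

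The crux is the following claim: there exist an integer \( N > 0 \) and an invertible sheaf \( L \) on \( X \), together with an isomorphism of \( \cA \)-bimodules \( ( \omega _{ \cA } ^{ - 1 } ) ^{ ( N ) } \simeq \cA \otimes _{ \cO _{ X } } L \). For the integer \( N \): the ramification divisor of \( \cA \) is finite, each ramification index \( e _{ \frakp } \) divides \( \rank \cA \), and at each of the finitely many singular points of \( X \) the complete local ring is a linearly reductive quotient singularity (by the argument of \cref{proposition:R1 is SH}) and so has finite class group; combining these with the existence of the local period \( n \), i.e.\ \( ( \omega _{ \Lambda } ^{ - 1 } ) ^{ ( n ) } = a \Lambda \) for some \( a \in K ^{ \ast } \) used in \eqref{equation:definition of Lambda1}, produces an \( N \) (in particular with \( e _{ \frakp } \mid N \) for every ramified \( \frakp \)) such that after completing at any closed point \( x \) the bimodule \( ( \omega _{ \cA } ^{ - 1 } ) ^{ ( N ) } \) becomes a free \( \widehat{ \cA } _{ x } \)-bimodule up to a central twist. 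Then I would take \( L \coloneqq \bfk ( X ) \cap ( \omega _{ \cA } ^{ - 1 } ) ^{ ( N ) } \) inside the constant sheaf \( A \): it is invertible, being locally free at each closed point (from the previous sentence) and generically of rank one (using \( \zentrum ( \cA ) = \cO _{ X } \)); and the multiplication map \( \cA \otimes _{ \cO _{ X } } L \to ( \omega _{ \cA } ^{ - 1 } ) ^{ ( N ) } \) is a map of reflexive \( \cO _{ X } \)-modules that is an isomorphism in codimension one — away from ramification this is clear, and at a ramified prime \( \frakp \) it follows from \( \rad ( \cA _{ \frakp } ) ^{ N ( e _{ \frakp } - 1 ) } = \frakp ^{ N - N / e _{ \frakp } } \cA _{ \frakp } \) together with the codimension-one description of \( \omega _{ \cA } \) recalled in \cref{rmk:dualizingStuff} — hence it is an isomorphism.

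Granting the claim, taking reflexive powers gives \( ( \omega _{ \cA } ^{ - 1 } ) ^{ ( N i ) } \simeq \cA \otimes _{ \cO _{ X } } L ^{ \otimes i } \) for all \( i \in \bZ \) and, more generally, \( ( \omega _{ \cA } ^{ - 1 } ) ^{ ( r + N i ) } \simeq ( \omega _{ \cA } ^{ - 1 } ) ^{ ( r ) } \otimes _{ \cO _{ X } } L ^{ \otimes i } \). Hence the \( N \)-th Veronese subalgebra of \( \cAtilde \) is \( \bigoplus _{ i \in \bZ } \cA \otimes _{ \cO _{ X } } L ^{ \otimes i } \), which is module-finite over \( \bigoplus _{ i \in \bZ } L ^{ \otimes i } \); the latter is Zariski-locally \( \cO _{ X } [ t, t ^{ - 1 } ] \) and so of finite type over \( \cO _{ X } \), whence the Veronese is of finite type over \( \cO _{ X } \), and \( \cAtilde \), being module-finite over this Veronese via the coherent \( \cO _{ X } \)-modules \( ( \omega _{ \cA } ^{ - 1 } ) ^{ ( r ) } \) with \( 0 \le r < N \), is of finite type over \( \cO _{ X } \). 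Since \( \bigoplus _{ i } L ^{ \otimes i } \) sits inside \( \zentrum ( \cAtilde ) \) and \( \cAtilde \) is module-finite over it, the Artin--Tate lemma gives that \( \zentrum ( \cAtilde ) \) is of finite type over \( \cO _{ X } \); in particular \( \Xtilde \) is noetherian, and the same module-finiteness then shows \( \cAtilde \) is coherent over \( \cO _{ \Xtilde } \). The main obstacle is clearly the intermediate claim — upgrading the completed-local triviality of a power of \( \omega _{ \cA } ^{ - 1 } \) to a global isomorphism with an invertible twist of \( \cA \), i.e.\ a ``\( \bQ \)-Cartier'' input for the ramification of \( \cA \); once this is in hand, the remaining Veronese-and-Artin--Tate bookkeeping runs exactly parallel to \cref{corollary:Rtilde is of finite type}.
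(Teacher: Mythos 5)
Your proposal follows essentially the same route as the paper's proof: the heart of both arguments is the claim that a suitable reflexive power of \( \omega_{\cA}^{-1} \) is \( \cA \) twisted by an invertible sheaf on \( X \), after which the Veronese/Laurent bookkeeping (module-finiteness of \( \cAtilde \) over a central copy of \( \cO_U[t,t^{-1}] \), finiteness of the centre, coherence over \( \Xtilde \)) is exactly as in the paper and parallel to \cref{corollary:Rtilde is of finite type}. The difference lies in how that key claim is established, and here the paper's formulation does work that your sketch elides. The paper takes \( N = n = \rank \cA \) outright (every ramification index divides the rank, \cref{rmk:dualizingStuff}) and defines the twist as \( \cL = \cHom_{\cA^{e}}\left(\cA, (\omega_{\cA}^{-1})^{(n)}\right) \); since this \( \cHom \) of coherent sheaves commutes with completion, invertibility of \( \cL \) and the isomorphism \( \cL \otimes_{\cO_X} \cA \to (\omega_{\cA}^{-1})^{(n)} \) reduce to pointwise completed-local computations. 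Your \( L = \bfk(X) \cap (\omega_{\cA}^{-1})^{(N)} \) instead requires a chosen embedding into the constant sheaf \( A \), and the two points you lean on --- producing a single \( N \) that works at every closed point, and local freeness of \( L \), in particular at the singular points of \( X \) --- are precisely the content of the crucial claim and are only gestured at: your lcm-type argument addresses height-one primes and the finitely many singular points, but local freeness of \( L \) at a point needs the completed-local central generator of \( (\omega_{\cA}^{-1})^{(N)} \) to be (chosen) in \( \bfk(X) \), or a separate class-group-killing step, neither of which is supplied; at smooth closed points on the ramification divisor one can instead observe that \( L \) is reflexive of rank one, which you do not record. So the strategy and the concluding bookkeeping match the paper, but as written the intermediate claim still needs the argument that the paper's \( \cHom_{\cA^{e}} \)-definition (with \( N = \rank\cA \)) is designed to make routine.
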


\begin{proof}
    Since the ramification index at each prime divides \(n\), the rank of \( \cA \) by~\cref{remark:dualizingStuff}, we see that at each point
    \(
       x
       \in
       X
    \)
    the completion of
    \(
        \left( \omega _{ \cA } ^{ - 1 } \right) ^{ ( n ) }
    \)
    is isomorphic to the completion
    \(
       \cAhat
       \coloneqq
       \cA
       \otimes
       \widehat{
        \mathcal{ O }
        }
        _{
            X,
            x
        }
    \)
    of
    \(
       \mathcal{ A }
    \),
    as
    \(
        \cAhat
    \)-bimodules. Therefore the completion of the following sheaf of homomorphisms of \( \cA \)-bimodules
    \begin{align}
        \cL
        \coloneqq
        \cHom _{ \cA ^{ e } }
        \left(
            \cA, \left( \omega _{ \cA } ^{ - 1 } \right) ^{ ( n ) }
        \right)
    \end{align}
    satisfies
    \begin{align}
        \cL
        \otimes
        \widehat{
            \mathcal{ O }
        }
        _{
            X,
            x
        }
        \simeq        
        \cHom
        _{
            \cAhat ^{ e }
        }
        \left(
            \cAhat,
            \cAhat
        \right)
        \simeq
        \zentrum
        \left(
            \cAhat
        \right)
        =
        \widehat{
            \mathcal{ O }
        }
        _{
            X,
            x
        }.
    \end{align}
    Thus we conclude that \( \cL \) is an invertible sheaf on \( X \) and that the natural map
    \(
        \cL \otimes _{ \cO _{ X } } \cA \to \left( \omega _{ \cA } ^{ - 1 } \right) ^{ ( n ) }
    \)
    is an isomorphism of \( \cA \)-bimodules.

    Take a Zariski open subset \( U \subseteq X \) over which \( \cL \) is trivial, and fix an isomorphism of \( \cA | _{ U } \)-bimodules
        \(
            \varphi \colon \cA | _{ U }
            \simto
            \left( \omega _{ \cA } ^{ - 1 } \right) ^{ ( n ) } \vert _{ U }
        \).
    Put \( t \coloneqq \varphi ( 1 ) \). Then
    \begin{align}
        \cO _{ U } [ t, t ^{ - 1 } ]
        \subseteq
        \zentrum
        \left(
            \left(
                \cAtilde \vert _{ U }
            \right)
            ^{
                ( n )
            }
        \right),
    \end{align}
    where
    \(
        \left(
            \cAtilde \vert _{ U }
        \right)
        ^{
            ( n )
        }
    \)
    is the
    \(
       n
    \)-th Veronese subalgebra of
    \(
        \cAtilde \vert _{ U }
    \).
    This implies that the algebra
    \(
        \cAtilde \vert _{ U }
    \)
    is generated by the
    \(
       \cO _{ U }
    \)-coherent sheaf
    \(
       \bigoplus
       _{
           j = 0
       }
       ^{
           n - 1
       }
       \cA ^{ ( j ) } \vert _{ U }
    \)
    as an algebra over
    \(
        \cO _{ U } [ t, t ^{ - 1 } ]
    \).
    This implies that    
    \(
       \cAtilde \vert _{ U }
    \)
    is finitely generated as a module over
    \(
        \cO _{ U } [ t, t ^{ - 1 } ]
    \)
    and hence
    \(
       \zentrum
       \left(
        \cAtilde \vert _{ U }
       \right)
    \)
    is finitely generated as an algebra over
    \(
       \cO _{ U }
    \).
\end{proof}

Let \( \cA _{ r } \) be the \( \cO _{ \cX _{ r } } \)-algebra corresponding to \( \cAtilde \) under the following equivalence of monoidal categories.
\begin{align}
    \coh \cX _{ r } \simeq \coh ^{ \bG _{ m } } \Xtilde
\end{align}

Let
\(
    U = \Spec R \to X
\)
be the standard morphism from the completion \( R \) of the local ring of a closed point of \( X \).
Put
\(
    \Lambda = \cA \otimes _{ \cO _{ X } } R
\).

\begin{lemma}
    The pair \( ( R, \Lambda )\) satisfies the assumptions of~\cref{situation:local}.
\end{lemma}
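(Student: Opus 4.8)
The plan is to verify the five bullet points of \cref{situation:local} one at a time for the pair $(R,\Lambda)$ with $R = \widehat{\cO}_{X,x}$ and $\Lambda = \cA \otimes_{\cO_X} R$. First, the hypothesis on $\bfk$ is inherited verbatim from \cref{definition:nc-surface}. Second, I would observe that since $X$ is a normal surface, the completion $R = \widehat{\cO}_{X,x}$ of the local ring at a closed point is a noetherian complete local $\bfk$-algebra of dimension $2$ with residue field $\bfk$ (using that $x$ is a $\bfk$-rational point since $\bfk$ is algebraically closed); normality of $X$ implies $\cO_{X,x}$ is normal, and completion of an excellent normal local domain is again a normal domain, so $R$ is integrally closed. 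Set $K = Q(R)$.

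Third, for the central simple algebra condition: $\cA$ is a split tame order on $X$ with $\zentrum(\cA) = \cO_X$, so $\cA \otimes_{\cO_X} \bfk(X)$ is a central simple $\bfk(X)$-algebra; I would need that $A \coloneqq \Lambda \otimes_R K = \cA \otimes_{\cO_X} K$ is central simple over $K$. Since ``split'' means $\cA \otimes \bfk(X)$ is a matrix algebra (or at least that the class is controlled), and more robustly since $\cA$ is an order with center $\cO_X$, the generic fiber base-changed to $K \supseteq \bfk(X)$ remains central simple — centrality is preserved under base field extension of a central simple algebra, and simplicity of $A \otimes_{\bfk(X)} K$ follows because $K/\bfk(X)$ is a field extension. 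Fourth, $\Lambda$ is a tame $R$-order in $A$ of global dimension $2$: tameness (reflexivity as an $R$-module plus hereditariness at height-one primes) is a condition that localizes and completes well, so it descends from the corresponding statement for $\cA$ at $x$; and $\gldim \Lambda = 2$ follows from $\gldim \cA = 2$ together with the fact that completion at a point of a $2$-dimensional excellent ring preserves finite global dimension (Auslander–Buchsbaum type arguments, or faithful flatness of completion). Fifth, $R$ is a direct summand of $\Lambda$ as an $R$-module: this is the ``split/locally a direct summand'' hypothesis in \cref{definition:nc-surface}, which says $\cO_X$ is locally a direct summand of $\cA$, and this property is preserved under the flat base change $\cO_{X,x} \to R$.

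The main obstacle I expect is the interface between the word ``split'' in \cref{definition:nc-surface} and the ``central simple over $K$'' requirement in \cref{situation:local}: one must be careful that the generic fiber $A$ over the \emph{completed} fraction field $K$ stays simple (not just semisimple), which uses that $R$ is a domain so $K$ is a genuine field and base change of a central simple algebra along a field extension stays central simple. A secondary technical point is confirming that global dimension $2$ and tameness genuinely survive completion; here I would invoke that $R$ is excellent (hence $\cO_{X,x} \to \widehat{\cO}_{X,x}$ is regular), so $\Lambda$ is flat over $\cA_x$ and the homological dimension and the hereditary-at-codimension-one properties transfer. All the verifications are routine given these standard facts, so the proof should be short: cite normality/excellence for the ring-theoretic bullets, cite flatness of completion for the direct-summand bullet, and unwind the definitions for tameness and global dimension.

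\begin{proof}
    We check the conditions of~\cref{situation:local}. The assumption on $\bfk$ is part of~\cref{definition:nc-surface}. Since $\bfk$ is algebraically closed and $X$ is normal, the closed point $x$ is $\bfk$-rational and $\cO_{X,x}$ is a normal noetherian local domain of dimension $2$ with residue field $\bfk$; as $X$ is quasi-projective over $\bfk$ it is excellent, so $R = \widehat{\cO}_{X,x}$ is again a normal (in particular integrally closed) noetherian complete local $\bfk$-algebra of dimension $2$ with $R/\frakm = \bfk$. Let $K = Q(R)$ and $A = \Lambda \otimes_R K$. Since $\zentrum(\cA) = \cO_X$ and $\cA$ is a tame order, $\cA \otimes_{\cO_X} \bfk(X)$ is a central simple $\bfk(X)$-algebra; base changing along the field extension $\bfk(X) \hookrightarrow K$ keeps it central simple, so $A$ is a central simple $K$-algebra. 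Because $\cO_{X,x} \to R$ is faithfully flat (indeed regular, as $\cO_{X,x}$ is excellent), $\Lambda$ is a tame $R$-order in $A$ (reflexivity and the hereditary condition at height-one primes descend from the corresponding properties of $\cA$ near $x$) and $\gldim \Lambda = \gldim (\cA \otimes_{\cO_X} \cO_{X,x}) = 2$. Finally, by~\cref{definition:nc-surface}, $\cO_X$ is locally a direct summand of $\cA$ as an $\cO_X$-module, and applying $- \otimes_{\cO_{X,x}} R$ shows $R$ is a direct summand of $\Lambda$ as an $R$-module.
\end{proof}
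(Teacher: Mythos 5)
Your verification is correct, and it covers all five bullets of \cref{situation:local}; the overall structure (everything transfers along the faithfully flat, regular map \( \cO_{X,x}\to R \), using excellence of \(X\)) is sound and in fact more explicit than the paper's own proof, which simply asserts that tameness and global dimension \(2\) pass to the completion. The one place where you genuinely diverge from the paper is the centrality issue. The paper's proof spends its only displayed argument showing that the centre of the completion is the completion of the centre, i.e.\ \( \zentrum(\Lambda)=R \), via the identification \( \zentrum(\Lambda\otimes_R S)\simeq \HH^0(\Lambda/R)\otimes_R S \) for flat \( R\to S \). You instead obtain what is needed by base-changing the generic fibre: \( A=\Lambda\otimes_R K=(\cA\otimes\bfk(X))\otimes_{\bfk(X)}K \) is central simple because \( \bfk(X)\hookrightarrow K \) is a field extension (valid since \(R\) is a normal complete local domain, so \(K\) really is a field containing \(\bfk(X)\)). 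This is a legitimate alternative; if one also wants \( \zentrum(\Lambda)=R \) on the nose (which the later constructions, e.g.\ \( R_1=\zentrum(\Lambda_1) \), implicitly rely on), it follows from your setup since \( \zentrum(\Lambda)\subseteq \Lambda\cap K \) is integral over \(R\) and \(R\) is integrally closed in \(K\) — it would be worth saying this one sentence, as it is the content the paper's Hochschild-cohomology computation is designed to supply. Two small remarks: your hesitation about the word ``split'' is unneeded — in \cref{definition:nc-surface} it refers to \( \cO_X \) being locally a direct summand of \( \cA \), not to the generic fibre being a matrix algebra, and your argument never actually uses the stronger reading; and the transfer of tameness at height-one primes of \(R\) lying over the generic point of \( \Spec\cO_{X,x} \) (where \( \Lambda_\frakq \) is Azumaya over a DVR) versus over height-one primes (where one uses that the formal fibres are geometrically regular, so the extension of DVRs is unramified) deserves at least a word, though the paper itself elides this entirely.
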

\begin{proof}
    By the assumptions of~\cref{definition:nc-surface}, \( \cA \) is of global dimension \( 2 \) and tame. This implies that
    \(
        \Lambda = \cA \otimes _{ \cO _{ X } } R
    \)
    has the same properties.

    It also follows that \( \zentrum ( \Lambda ) = R \), since in general the centre of the completion of an algebra coincides with the completion of the centre. This follows from the general fact that if \( \Lambda \) is an algebra with \( \zentrum ( \Lambda ) = R \) and \( R \to S \) is a flat morphism of commutative algebras, then there is a sequence of isomorphisms as follows.
    \begin{align}
        \zentrum ( \Lambda \otimes _{ R } S )
        \simeq
        \HH ^{ 0 } \left( \Lambda \otimes _{ R } S / \bfk \right)
        \simeq
        \HH ^{ 0 } \left( \Lambda \otimes _{ R } S / S \right)\\
        \stackrel{\text{\(R\) is flat over \( S \)}}{\simeq}
        \HH ^{ 0 } \left( \Lambda / R \right) \otimes _{ R } S
        \simeq
        \HH ^{ 0 } \left( \Lambda / \bfk \right) \otimes _{ R } S
        \simeq
        \zentrum ( \Lambda ) \otimes _{ R } S.
    \end{align}
\end{proof}

The next result relates the local objects introduced in~\cref{section:Reiten-Van den Bergh} to the global objects which have we just constructed.

\begin{proposition}
\begin{align}\label{equation:global vs local}
    \Xtilde \times _{ X } U
    &
    \simeq
    \Spec \Rtilde \quad \text{(\(\bGm\)-equivariant)}
    \\
    \cX _{ r } \times _{ X } U
    &
    \simeq
    \left[ \Spec \Rtilde / \bGm \right]
    \simeq
    \left[ \Spec R _{ 1 } / \mu _{ n } \right]
    \\
    \cA _{ r } \vert _{ U }
    &
    \simeq
    \Lambdatilde
\end{align}
\end{proposition}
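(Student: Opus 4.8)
The plan is to observe that all of the constructions entering \( \Xtilde \), \( \cX _{ r } \), and \( \cA _{ r } \) are compatible with the flat base change along \( \cO _{ X, x } \to R = \widehat{ \cO } _{ X, x } \), and then to combine this with~\cref{corollary:iso of tame algebraic stacks}. By the lemma immediately preceding this proposition, \( ( R, \Lambda ) \) satisfies the hypotheses of~\cref{situation:local}, so \( \Lambdatilde \) and \( \Rtilde \) are defined. The whole statement then reduces to proving the single isomorphism of \( \bZ \)-graded \( R \)-algebras
\begin{align}
    \cAtilde \otimes _{ \cO _{ X } } R \simeq \Lambdatilde ,
\end{align}
where throughout \( - \otimes _{ \cO _{ X } } R \) denotes the (flat) formal base change of coherent sheaves.

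To prove this I would work degree by degree. Since \( \cA \) and all the coherent sheaves occurring in \( \cAtilde \) are finitely presented over the noetherian base and \( R \) is flat over \( \cO _{ X, x } \), the operations \( \cHom _{ \cO _{ X } } ( - , - ) \), \( \cHom _{ \cA } ( - , - ) \), \( \otimes _{ \cA } \), and reflexive hull all commute with \( - \otimes _{ \cO _{ X } } R \). Together with the compatibility of the dualizing sheaf with completion, \( \omega _{ X } \otimes _{ \cO _{ X } } R \simeq \omega _{ R } \) (see~\cref{remark:bimodule}), the descriptions~\eqref{equation:dualizing bimodule of cA} and~\eqref{equation:omega_Lambda explicit} give \( \omega _{ \cA } \otimes _{ \cO _{ X } } R \simeq \omega _{ \Lambda } \), hence \( \omega _{ \cA } ^{ - 1 } \otimes _{ \cO _{ X } } R \simeq \omega _{ \Lambda } ^{ - 1 } \), and then \( \left( \omega _{ \cA } ^{ - 1 } \right) ^{ ( i ) } \otimes _{ \cO _{ X } } R \simeq \left( \omega _{ \Lambda } ^{ - 1 } \right) ^{ ( i ) } \) for every \( i \). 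Because \( \cAtilde \) and \( \Lambdatilde \) are produced by the identical generalized Rees construction \( \bigoplus _{ i } ( - ) ^{ ( i ) } x ^{ i } \) inside the respective algebras of Laurent polynomials, these isomorphisms are multiplicative, giving the desired graded algebra isomorphism.

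Granting this, the rest is formal. Taking centres commutes with flat base change --- this is exactly the \( \HH ^{ 0 } \)-computation used in the proof of the preceding lemma --- so \( \zentrum ( \cAtilde ) \otimes _{ \cO _{ X } } R \simeq \zentrum ( \Lambdatilde ) = \Rtilde \) as \( \bZ \)-graded algebras, whence
\begin{align}
    \Xtilde \times _{ X } U = \Spec _{ X } \zentrum ( \cAtilde ) \times _{ X } \Spec R \simeq \Spec \left( \zentrum ( \cAtilde ) \otimes _{ \cO _{ X } } R \right) \simeq \Spec \Rtilde
\end{align}
\( \bGm \)-equivariantly. Passing to \( \bGm \)-quotients and applying~\cref{corollary:iso of tame algebraic stacks} yields \( \cX _{ r } \times _{ X } U \simeq \left[ \Spec \Rtilde / \bGm \right] \simeq \left[ \Spec R _{ 1 } / \mu _{ n } \right] \). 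Finally, \( \cA _{ r } \) corresponds to \( \cAtilde \) under \( \coh \cX _{ r } \simeq \coh ^{ \bGm } \Xtilde \), so restricting to \( U \) and using the identifications above, \( \cA _{ r } \vert _{ U } \) corresponds to \( \cAtilde \otimes _{ \cO _{ X } } R \simeq \Lambdatilde \) under \( \coh \left( \cX _{ r } \times _{ X } U \right) \simeq \module ^{ \bZ } \Rtilde \), which is the last assertion.

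The main obstacle is the middle step: one must verify with care that reflexive hulls and the internal \( \cHom \) over the \emph{noncommutative} algebra \( \cA \) are unchanged by the formal base change, which rests on coherence of \( \cA \) and its bimodules (cf.~\cref{lemma:Xtilde is of finite type over X} and the tameness hypotheses in~\cref{situation:local}) together with flatness of completion; one also uses the compatibility of \( \omega _{ X } \) with completion recorded in~\cref{remark:bimodule}. The remaining manipulations of quotient stacks and the appeal to~\cref{corollary:iso of tame algebraic stacks} are routine.
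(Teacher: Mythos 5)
Your proposal is correct and follows essentially the same route as the paper: establish the degree-wise isomorphisms \( \left( \omega _{ \cA } ^{ - 1 } \right) ^{ ( i ) } \otimes _{ \cO _{ X } } R \simeq \left( \omega _{ \Lambda } ^{ - 1 } \right) ^{ ( i ) } \) via localization of the dualizing bimodule and compatibility of duals/reflexive hulls with flat base change, conclude \( \cAtilde \otimes _{ \cO _{ X } } R \simeq \Lambdatilde \) as graded algebras, pass to centres using that they commute with flat base change, and deduce the stack-level statements from \cref{corollary:iso of tame algebraic stacks}. No gaps to flag.
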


\begin{proof}
    For each \( i \in \bZ \), we have
    \begin{align}
        \left( \omega _{ \cA } ^{ - 1 } \right) ^{ ( i ) } \otimes _{ \cO _{ X } } R
        &
        \simeq
        \cHom _{ X } \left( \cHom _{ X } \left( \left( \omega _{ \cA } ^{ - 1 } \right) ^{ \otimes i }, \cO _{ X } \right), \cO _{ X } \right)
        \otimes _{ \cO _{ X } } R
        \\
        &
        \simeq
        \Hom _{ R } \left( \Hom _{ R } \left( \left( \omega _{ \cA } ^{ - 1 } \otimes _{ \cO _{ X } } R \right) ^{ \otimes i }, R \right), R \right)
        \simeq
        \left(\omega _{ \Lambda } ^{ - 1 }\right) ^{ ( i ) }.
    \end{align}

    By~\cite[Lemma~2.5~(3)]{MR2492474} or~\cref{remark:bimodule}, the dualizing bimodule localizes and so
    \(
        \omega_{ \cA } ^{ - 1 } \otimes _{ \cO _{ X } } R \simeq \omega _{ \Lambda } ^{ - 1 }
    \).
    Thus we obtain the following isomorphism of \( \bZ \)-graded \( R \)-algebras
    \begin{align}
        \cAtilde \otimes _{ \cO _{ X } } R
        \simeq
        \Lambdatilde
    \end{align}
    Hence
    \begin{align}
        \zentrum ( \cAtilde ) \otimes _{ \cO _{ X } } R
        \simeq
        \zentrum ( \cAtilde ) \otimes _{ \cO _{ \Xtilde } } \Rtilde
        \stackrel{\text{\cref{lemma:Xtilde is of finite type over X}}}{\simeq}
        \zentrum ( \cAtilde \otimes _{ \cO _{ \Xtilde } } \Rtilde )
        \simeq
        \zentrum \left( \cAtilde \otimes _{ \cO _{ X } } R \right)
        \simeq
        \zentrum ( \Lambdatilde )
        =
        \Rtilde,
    \end{align}
    so that we obtain the following \( \bGm \)-equivariant isomorphism.
    \begin{align}
        \Xtilde \times _{ X } U \simeq \Spec \Rtilde
    \end{align}
    The rest of the assertions immediately follow from this.
\end{proof}

\begin{proposition}\label{proposition:tame_algebraic_lrqs}
    The stack
    \( \cX _{ r } \)
    is a normal tame algebraic stack with only linearly reductive quotient singularities.
\end{proposition}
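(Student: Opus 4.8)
The plan is to reduce all three assertions to the local model \( [\Spec R_1/\mu_n] \) provided by the preceding proposition and then to feed in~\cref{proposition:R1 is SH} together with the good properties of \( \mu_n \). Note first that \( \cX_r = [\Xtilde/\bGm] \) is an algebraic stack, since \( \Xtilde \) is of finite type over \( X \) by~\cref{lemma:Xtilde is of finite type over X} and \( \bGm \) is a smooth affine group scheme. All three properties are local on \( X \) and may be checked after base change along the morphisms \( U = \Spec \widehat{\cO}_{X,x} \to X \), \( x \) running over the closed points (which jointly cover \( X \)): normality of \( \Xtilde \), hence of \( \cX_r \), can be tested on completed local rings since \( \Xtilde \) is excellent; the formation of stabilizer group schemes and of the coarse space, and hence tameness, commute with this base change; and by~\cref{rmk:canonicalStackConstruction} having only linearly reductive quotient singularities amounts to a strong \( F \)-regularity (resp.\ log terminality) condition, which is formal-local. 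By~\eqref{equation:global vs local}, \( \cX_r \times_X U \simeq [\Spec R_1/\mu_n] \simeq [\Spec \Rtilde/\bGm] \) with \( (R,\Lambda) \) as in~\cref{situation:local}, so it is enough to prove the statement for \( [\Spec R_1/\mu_n] \).

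By~\cref{proposition:R1 is SH}, \( R_1 = S^{H} \) with \( S = \bfk[[x,y]] \) regular and \( H \) finite linearly reductive acting freely on \( \Spec S \setminus \{0\} \). Rings of invariants of a normal domain under a geometrically reductive finite group scheme are normal, so \( R_1 \)—and hence also \( \Rtilde = (R_1[\xi,\xi^{-1}])^{\mu_n} \) by~\cref{corollary:Rtilde as invariant ring}—is normal; since normality of a stack may be checked on a smooth atlas, the presentation \( [\Spec \Rtilde/\bGm] \) with its atlas \( \Spec \Rtilde \) shows that \( [\Spec R_1/\mu_n] \) is normal. For tameness, \( [\Spec R_1/\mu_n] \) has finite inertia (its coarse space is \( \Spec R \)), and the stabilizer group scheme of any geometric point is a closed subgroup scheme of \( \mu_n \), hence diagonalizable, hence finite and linearly reductive; thus \( [\Spec R_1/\mu_n] \) is tame by~\cite[Theorem~3.2]{MR2427954}.

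The main content is the statement about linearly reductive quotient singularities. Away from the closed point, the \( \mu_n \)-action on \( \Spec R_1 \) takes place on the smooth locus, so there \( [\Spec R_1/\mu_n] \) is visibly a quotient of a smooth scheme by a finite linearly reductive group scheme. At the closed point I would pass to the \( \bGm \)-presentation: \( \Spec \Rtilde \to [\Spec \Rtilde/\bGm] = [\Spec R_1/\mu_n] \) is a \( \bGm \)-torsor, hence a smooth surjection of relative dimension one from a three-dimensional scheme, and therefore it suffices to show that \( \Spec \Rtilde \) is a scheme with only linearly reductive quotient singularities. Now \( R_1[\xi,\xi^{-1}] = (S[\xi,\xi^{-1}])^{H} \) with \( S[\xi,\xi^{-1}] \) regular and \( H \) finite linearly reductive, so the completed local rings of \( \Spec R_1[\xi,\xi^{-1}] \) are of the form \( \widehat{\cO}_{\bA^3/H,0} \): it is a three-dimensional linearly reductive quotient singularity scheme. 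Finally \( \Spec \Rtilde = \Spec R_1[\xi,\xi^{-1}]/\mu_n \), and this \( \mu_n \)-action is \emph{free} (see the proof of~\cref{corollary:iso of tame algebraic stacks}); so one is reduced to the assertion that a free quotient of a linearly reductive quotient singularity scheme by a finite linearly reductive group scheme is again one. I would establish this by writing \( \Spec R_1[\xi,\xi^{-1}] = V /\!/ H \) with \( V \) smooth and \( H \) acting without pseudo-reflections, observing that the canonical stack \( [V/H] \to V /\!/ H \) is an isomorphism in codimension one, lifting the free \( \mu_n \)-action canonically to \( [V/H] \) and then to \( V \) (up to \( H \)), thereby obtaining a finite linearly reductive extension \( 1 \to H \to G \to \mu_n \to 1 \) acting on \( V \) with \( V /\!/ G = \Spec \Rtilde \); the tool here is functoriality of the canonical stack (cf.~\cite{MR3719470,MR1005008}).

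I expect the one genuinely delicate point to be this last lifting when \( \characteristic(\bfk) \mid n \). Then \( \mu_n \) is not smooth, the \( \mu_n \)-torsor \( \Spec R_1[\xi,\xi^{-1}] \to \Spec \Rtilde \) is only fppf- and not étale-locally trivial, and one cannot lift the action to the smooth cover by a naive étale-local comparison; one must instead use the fppf functoriality of the canonical stack (together with descent of strong \( F \)-regularity along the finite flat covers in play, cf.~\cite{2021arXiv210201067L}). Everything else is bookkeeping with the local model and with standard descent of normality and tameness.
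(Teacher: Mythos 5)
Your handling of algebraicity, normality and tameness is correct and is essentially the paper's own argument: reduce along \( U=\Spec\widehat{\cO}_{X,x}\to X \) to the local model \( \cX_r\times_X U\simeq[\Spec R_1/\mu_n] \) of \eqref{equation:global vs local}, check stabilizers there, and descend. (Two small points: for normality what you need is faithfully flat descent along \( \Xtilde\times_X U\to\Xtilde \), not excellence, which would only be relevant in the opposite direction; and note that exhibiting a locus of \( [\Spec R_1/\mu_n] \) as \( [W/G] \) with \( W \) regular does not by itself witness \cref{df:linearly reductive quotient singularity}, since the atlas \( W\to[W/G] \) is not smooth when \( G \) is infinitesimal --- away from the closed point one should instead use that \( \Xtilde \) is regular there and is a smooth atlas.)

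The genuine gap is in your treatment of the singular points. You reduce the claim to the assertion that the free \( \mu_n \)-quotient \( \Spec\Rtilde \) of the three-dimensional linearly reductive quotient singularity \( \Spec R_1[\xi,\xi^{-1}] \) is again a linearly reductive quotient singularity, and you propose to prove this by lifting the \( \mu_n \)-action to the smooth cover \( V \) and producing an extension \( 1\to H\to G\to\mu_n\to 1 \) with \( V/G=\Spec\Rtilde \). Precisely when \( \characteristic(\bfk)\mid n \) this lifting is the unproved step: \( \mu_n \) is non-smooth, \( \mu_n\times Y \) is non-reduced, the torsor is not étale-locally trivial, and the canonical stack is only known to commute with \emph{smooth} base change, so the ``fppf functoriality of the canonical stack'' you invoke is not an available tool; nor does descent of strong \( F \)-regularity substitute for it, because in dimension \( 3 \) strong \( F \)-regularity (which \( \Rtilde \) does enjoy, being a pure subring of \( S[\xi,\xi^{-1}] \) in characteristic \( p\ge 7 \)) is strictly weaker than being a linearly reductive quotient singularity, so it does not produce the smooth covers \( U/H \) required by \cref{df:linearly reductive quotient singularity}. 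Moreover, constructing such an extension \( G \) is exactly the ``combine the two extensions into a single quotient'' manoeuvre that the introduction flags as unclear in this non-Galois setting and that the paper deliberately avoids. The paper instead stays two-dimensional over the coarse space: it deduces the statement directly from \eqref{equation:global vs local}, \cref{proposition:R1 is SH}, and the characterization of these singularities via strong \( F \)-regularity (resp.\ log terminality) in dimension two discussed in \cref{rmk:canonicalStackConstruction}, never attempting to merge \( H \) and \( \mu_n \) into one group scheme. As written, your argument does not establish the key step at the singular points in the case \( \characteristic(\bfk)\mid n \).
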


\begin{proof}
We already know that \(\cX_r\) is an algebraic stack.
It follows from~\eqref{equation:global vs local} and~\cite[Theorem~3.2]{MR2427954} that it is a tame algebraic stack. Note that, although~\cite[Theorem~3.2~(c)]{MR2427954} is stated only for fppf covers, it follows from~\cite[Corollary~3.4]{MR2427954} and~\cite[Corollary~3.5]{MR2427954} that if \( f \colon Y \to X \) is any surjective morphism of schemes and the base change of a stack over \( X \) by \( f \) is tame, then so is the original stack over \( X \).
Finally, it follows from~\eqref{equation:global vs local} and the results in~\cref{section:Reiten-Van den Bergh} that \( \cX _{ r } \) is normal and has only linearly reductive quotient singularities.
\end{proof}

%
%
\section{The canonical stack \( \left( \cX _{ c }, \cA _{ c } \right) \)}

%
%
\subsection{Review of canonical stacks}
Suppose \( \bfk \) is a field and $X$ is a \( \bfk \)-variety with tame quotient singularities, i.e.~there is an \'etale cover $\{X_i\to X\} _{ i }$ such that each $X_i$ is a quotient $U_i/G_i$ with $U_i$ smooth and $G_i$ a finite group whose order is prime to the characteristic of \( \bfk \). In \cite[Proof of Proposition~2.8 and~(2.9)]{MR1005008}, Vistoli constructed a canonical smooth tame Deligne--Mumford stack $\cX$ with coarse space $\pi\colon\cX\to X$. Moreover, $\pi$ is an isomorphism over the smooth locus $X^{\textrm{sm}}$ and hence $\cX$ only adds stacky structure at the singular points of $X$. This \emph{canonical stack} has a universal property, see e.g., \cite[Theorem~4.6]{MR2774310}:~it is terminal for dominant codimension-preserving morphisms $\mathcal{Z}\to X$, where $\mathcal{Z}$ is a smooth tame Deligne--Mumford stack; their theorem is stated in characteristic $0$ but the proof works equally well in arbitrary characteristic when one imposes tameness assumptions. The construction of the canonical stack $\cX\to X$ proceeds as follows:~due to the universal property, it suffices to construct the stack locally and hence one may assume $X=U/G$ with $U$ smooth and $G$ a finite group whose order is prime to the characteristic of \( \bfk \). In this case, one makes use of the Chevalley--Shephard--Todd Theorem \cite{MR0072877} which allows one to assume $G$ acts on $U$ without pseudo-reflections. As a result, the coarse space map $[U/G]\to X$ is an isomorphism over the smooth locus, thereby showing the existence of the canonical stack in the local setting.
For our work, we must consider \( \bfk \)-varieties $X$ with singularities that are worse than tame quotient singularities. For example, the $A_1$-singularity $\Spec \bfk[x,y,z]/(xz-y^2)$ is not of the form $U/G$ with $U$ smooth and $G$ a finite group (even if one allows the characteristic of the field to divide the order of the group $G$). Instead, it is $\mathbb{A}^2/\mu_2$; note that for fields of characteristic prime to $2$, $\mu_2\simeq\mathbb{Z}/2$. However, when \( \bfk \) has characteristic $2$, $\mu_2$ is a finite group scheme which is non-reduced. Despite this, $\mu_2$ is in many ways better behaved than $\mathbb{Z}/2$ in characteristic $2$, e.g., it is linearly reductive whereas $\mathbb{Z}/2$ has representations that are not split.
We say $X$ has linearly reductive quotient singularities if there is an \'etale cover $\{X_i\to X\}$ such that each $X_i$ is a quotient $U_i/G_i$ with $U_i$ smooth and $G_i$ a finite linearly reductive group scheme. For such $X$, the fourth author generalized Vistoli's construction, giving a canonical smooth tame algebraic stack $\cX$ with coarse space $\pi\colon\cX\to X$ which is an isomorphism over $X^{\textrm{sm}}$, see~\cite[Theorem~1.10]{MR2950159}. We refer to $\cX$ as the \emph{canonical stack} of $X$. As in Vistoli's construction, $\cX$ is constructed \'etale locally on $X$ and glued using a universal property; note that the Chevalley--Shephard--Todd theorem does not apply to finite linearly reductive group \emph{schemes} and so the main input to the canonical stack construction involves proving an appropriate generalization of the the Chevalley--Shephard--Todd theorem. In particular, $\cX$ is also given locally by $[U/G]$ where $U$ is a smooth and $G$ is a finite linearly reductive group scheme acting without pseudo-refections, see \cite[Definition 1.2]{MR2950159}.
\begin{remark}\label{remark:can-stack-over-stack}
Although \cite[Theorem~1.10]{MR2950159} is only stated for the case where the base is a scheme, the proof works equally well when the base is a stack, as we now explain. Let \(M\) be a tame algebraic stack with only linearly reductive quotient singularities, which by definition means there is a smooth cover \(\{M_i\to M\}\) with \(M_i=U_i/G_i\), \(U_i\) a smooth scheme, and \(G_i\) a finite linearly reductive group scheme. Let \(\cX_i\to M_i\) be the canonical tame stack given by Theorem 1.10 of (loc.~cit.). Letting \(M_{ij}=M_i\times_M M_j\), Lemma~5.5 of (loc.~cit.) yields an isomorphism between \(\cX_i\times_{M_i}M_{ij}\) and \(\cX_j\times_{M_i}M_{ij}\) which is unique up to 2-isomorphism; this is because canonical stacks commute with smooth base change.
Thus, we obtain our canonical stack \(\cX\to M\) with the property that \(\cX_i=\cX\times_{M}M_i\).
\end{remark}

Since~\cref{proposition:tame_algebraic_lrqs}, shows that \( \cX _{ r } \) is tame and algebraic with linearly reductive quotient singularities, we have an associated canonical stack
\begin{align}\label{equation:canonical to root}
    c \colon \cX _{ c } \to \cX _{ r },
\end{align}The stack \(\cX _{ c }\) is a smooth tame algebraic stack, and \(c\) is a relative coarse space map which is an isomorphism over the smooth locus of \( \cX _{ r } \).

%
%
\subsection{Morita equivalences}

We first prove a general Morita-equivalence criterion.
Let $c\colon\cX\to\cY$ be a morphism of finite type algebraic stacks over \( \bfk \).
Assume that
\(
   c _{ \ast }
   \colon
   \Qcoh \cX
   \to
   \Qcoh \cY
\)
is an exact functor. Let $\cA_\cX$ (resp.~$\cA_\cY$) be a coherent sheaf of $\cO _{
    \cX
}$-algebras (resp.~$\cO _{ \cY }$-algebras) and suppose we have an isomorphism
$
    \alpha\colon \cA_\cY\to c_{ \ast }\cA_\cX
$
of
\(
   \cO
   _{
    \cY
   }
\)-algebras. Let
\(
   \alpha '
   \colon
   c
   ^{
    \ast
   }
   \cA
   _{
    \cY
   }
   \to
   \cA
   _{
    \cX
   }
\)
be the morphism corresponding to
\(
   \alpha
\)
under the adjoint pair \( c^{ \ast } \dashv c_{ \ast } \).
In this situation we obtain the pair of adjoint functors
    \begin{align}\label{equation:adjoint pair (G,F)}
        G \colon \coh \cA_\cY \rightleftarrows \coh \cA _\cX \colon F,
    \end{align}
where
\begin{align}
    F
    \left(
        \cM
    \right)
    &
    =
    c _{ \ast }
    \cM,
    \\
    G(\cN)
    &
    =
    c^{ \ast }\cN \otimes_{ c^{ \ast }\cA_\cY, \alpha '} \cA _\cX.
\end{align}
For
\(
   \cM
   \in
   \coh
   \cA
   _{
    \cX
   }
\),
let
\begin{align}\label{equation:adjunction counit}
    \eta = \eta _{ \cM } \colon G F ( \cM ) = c^{ \ast }c_{ \ast }\cM \otimes _{ c^{ \ast }\cA_\cY } \cA _\cX \to \cM.
\end{align}
be the adjunction counit map evaluated at
\(
   \cM
\).
\begin{theorem}\label{theorem:general equivalence}
    Suppose that the map~\eqref{equation:adjunction counit} is an isomorphism for all $\cM \in \coh \cA_\cX$. Then the functor
    \(
       G
    \)
    is an equivalence of \( \bfk \)-linear categories.
\end{theorem}

\begin{proof}
We first show \( G \) is fully faithful; equivalently, we show that the adjunction unit
\(
    \mu \colon \id \to F G
\)
is a natural isomorphism.
For each \( \cN \in \coh \cA_\cY \), the map
\(
    \mu _{ \cN } \colon \cN \to F ( G ( \cN ) )
\)
coincides with the composition of the following morphisms
\[
    \cN \xrightarrow[\text{\(\alpha\) is iso}]{\sim}
    \cN \otimes_{\cA, \alpha} c_{ \ast } \cA _\cX \mathrel{\mathop{\longrightarrow}}
   c_{ \ast }( c^{ \ast } \cN \otimes_{ c^{ \ast }\cA_\cY} \cA _\cX )
\]
so it is enough to confirm that the righthand morphism is an isomorphism. Since this map is defined globally, we need only prove that its restriction to any smooth cover \( U\to \cY\) is an isomorphism; we may look further locally and assume $U$ is affine. To ease notation, let \( \cN, \cA_\cY, \) etc.~denote their restriction to one of those affine opens. Then the morphism is obviously an isomorphism for \( \cN = \cA \). Both sides are right exact functors in \( \cN \), and the category of coherent modules over \( \cA_\cY \) is generated by \( \cA_\cY \) (over such an open affine subset). Thus we are done.

Finally, the essentially surjectivity of
\(
   G
\)
follows from the assumption.
\end{proof}

Our next aim is to apply~\cref{theorem:general equivalence} to $\cX$, $\cX_r$ and $\cX_c$. We do so after establishing some preliminary results. We put
\begin{align}\label{equation:cAc}
    \cA _{ c } \coloneqq \left( c ^{ \ast } \cA _{ r } \right) ^{ \vee \vee }.
\end{align}
Let \(R\) be the completion of a local ring of \(X \) and let 
\(
    U = \Spec R \to X
\)
be the induced map. Then \(R\) is as in~\cref{section:Reiten-Van den Bergh}. We freely use the notation introduced there.
Consider the base change of~\eqref{equation:canonical to root} by \( U \to X \):
\begin{align}\label{equation:from Xc to Xr local}
    c \vert _{ U } \colon
    \cX _{ c } \times _{ X } U \to \cX _{ r } \times _{ X } U
    \simeq
    \left[ \Spec R _{ 1 } / \mu _{ n } \right].
\end{align}
By~\cref{proposition:R1 is SH}, there is a linearly reductive finite subgroup scheme
\(
    H < \GL ( 2, \bfk )
\)
and a linear action
\(
    H \curvearrowright \Spec S
\),
where
\(
    S = \bfk [[ x, y ]]
\),
which is free outside the origin and
\(
    R _{ 1 } \simeq S ^{ H }
\).
The following commutative diagram of stacks describes the relationship between global objects and local objects,
where the symbol
\(
    \ulcorner   
\)
means that the square is Cartesian.
\begin{figure}[H]
    \centering
    \includegraphics[scale=1.2]{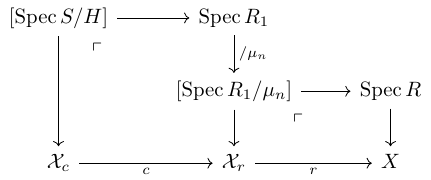}
    \caption{}
    \label{figure:local vs global}
\end{figure}

Since reflexive hulls commute with flat pullback, we see that
\begin{align}\label{equation:Gamma vs cAc}
    \Gamma \simeq \cA _{ c } \vert _{ \Spec S }.
\end{align}
Recall that \( \Gamma \) is equipped with an \( H \)-action. The smash product
\(
    \Gamma \ast H
\)
is identified with a sheaf of \( \cO _{ \left[ \Spec S / H \right] } \)-algebras, which will be denoted by the same symbol by abuse of notation. We have the following isomorphism.
\begin{align}\label{equation:Gamma ast H vs cA}
    M _{ n } ( S ) \ast H
    \simeq
    \Gamma \ast H \simeq \cA _{ c } \vert _{ \left[ \Spec S / H \right] }
\end{align}

\begin{proposition}
    \( \cA _{ c } \) is a sheaf of Azumaya algebras on the smooth tame algebraic stack \( \cX _{ c } \).
\end{proposition}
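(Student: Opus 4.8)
The plan is as follows. Recall that $\cX_c$ was constructed in the previous section as the canonical stack over the normal tame algebraic stack $\cX_r$ with linearly reductive quotient singularities, so $\cX_c$ is a smooth tame algebraic stack and only the Azumaya property of $\cA_c$ needs proof. I would verify this by faithfully flat descent, using~\cref{remark:local criterion for Azumaya algebras}. Let $x$ run over the closed points of $X$, set $R_x=\widehat{\cO}_{X,x}$, and let $\cX_c\times_X\Spec R_x\simeq[\Spec S_x/H_x]$ be the formal-local chart from the diagram above. Each $\Spec R_x\to X$ is flat, and $\bigsqcup_x\Spec R_x\to X$ is surjective since every point of $X$ specializes to a closed point; base-changing to $\cX_c$ and composing with the $H_x$-torsors $\Spec S_x\to[\Spec S_x/H_x]$, we obtain a faithfully flat morphism $\bigsqcup_x\Spec S_x\to\cX_c$ (which need not be locally of finite presentation — this is permitted in~\cref{remark:local criterion for Azumaya algebras}). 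By~\eqref{equation:Gamma vs cAc}, the pullback of $\cA_c$ along $\Spec S_x\to\cX_c$ is the algebra $\Gamma$ of~\cref{section:Reiten-Van den Bergh} attached to $\bigl(R_x,\cA\otimes_{\cO_X}R_x\bigr)$. Hence it suffices to show that $\Gamma$ is a sheaf of Azumaya algebras on $\Spec S$, with $S=\bfk[[x,y]]$ as in~\cref{situation:local}.

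I would in fact prove $\Gamma\simeq M_n(S)$. First, $\Gamma$ is reflexive over $S$ by construction; since $S$ is regular of dimension $2$, a finitely generated reflexive $S$-module is maximal Cohen--Macaulay, hence free, so $\Gamma$ is $S$-free of rank $n^2$. Second, $\Gamma$ is Azumaya on the punctured spectrum $U\coloneqq\Spec S\setminus\{0\}$: the $H$-action on $\Spec S$ is free on $U$, so $\pi\colon U\to\Spec R_1\setminus\{0\}$ is étale, and as in the proof of~\cref{proposition:Gamma * H simeq End Gamma} one has $\Gamma|_U\simeq\pi^\ast\bigl(\Lambda_1|_{\Spec R_1\setminus\{0\}}\bigr)$, which is Azumaya because $\Lambda_1$ is Azumaya in codimension one by~\cite[Proposition~5.1]{MR978602} and pullbacks of Azumaya algebras are Azumaya.

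Third, $\operatorname{Br}(U)=0$: by purity of the Brauer group the restriction $\operatorname{Br}(\Spec S)\to\operatorname{Br}(U)$ is an isomorphism, since $\{0\}$ has codimension $2$ in the regular scheme $\Spec S$; and $\operatorname{Br}(\Spec S)=\operatorname{Br}(S)\simeq\operatorname{Br}(\bfk)=0$ because $S$ is henselian local with algebraically closed residue field. Hence $\Gamma|_U\simeq\cEnd_{\cO_U}(V)$ for a vector bundle $V$ on $U$; but every vector bundle on $U$ is free, because its reflexive extension $j_\ast V$ across the codimension-$2$ point is a reflexive, hence free, $S$-module, where $j\colon U\hookrightarrow\Spec S$. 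Thus $\Gamma|_U\simeq M_n(\cO_U)$ as sheaves of algebras, and since $\Gamma$ is reflexive we get $\Gamma\simeq j_\ast(\Gamma|_U)$, so that $\Gamma\simeq H^0\bigl(U,M_n(\cO_U)\bigr)=M_n\bigl(H^0(U,\cO_U)\bigr)=M_n(S)$ as $S$-algebras. In particular $\Gamma$ is Azumaya over $S$, and by the descent of the first paragraph $\cA_c$ is a sheaf of Azumaya algebras on $\cX_c$.

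I expect the crux to be the passage from codimension one to the closed point. All the structural input available — tameness of $\Lambda$ and the Azumaya property of $\Lambda_1$, hence of $\Gamma$ — is a priori codimension-one information, and one must exclude residual ramification at the now-smooth closed point of $\Spec S$. This is precisely what $\operatorname{Br}(U)=0$ supplies, and it is here that the regularity of $S$ — the payoff of passing to the canonical stack $\cX_c$ — is used decisively, both to make reflexive modules free and to invoke purity of the Brauer group.
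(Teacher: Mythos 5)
Your proposal is correct, and its global-to-local skeleton is the same as the paper's: check the Azumaya condition after pulling back along the flat, jointly surjective charts \( \Spec S_x \to \cX_c \) (via \cref{remark:local criterion for Azumaya algebras}), using \eqref{equation:Gamma vs cAc} to identify the pullback with \( \Gamma \). The difference is in how the key local fact is handled: the paper simply asserts that \( \Gamma \) is Azumaya over \( S \), citing \cref{section:Reiten-Van den Bergh} (ultimately~\cite[Proposition~5.1]{MR978602}, which gives that \( \Lambda_1 \), hence \( \Gamma \), is reflexive Azumaya, i.e.\ Azumaya in codimension one), whereas you actually prove \( \Gamma \simeq M_n(S) \): reflexive over the \(2\)-dimensional regular complete local ring \( S \) hence free; Azumaya on the punctured spectrum \( U \) by flat pullback from \( \Lambda_1 \); Brauer class trivial on \( U \) by purity plus \( \Br(S)=\Br(\bfk)=0 \); and then \( \Gamma = j_\ast(\Gamma|_U) \simeq M_n(S) \) since vector bundles on \( U \) are free. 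This is a legitimate and self-contained route, and it in fact also justifies the isomorphism \( M_n(S)\ast H \simeq \Gamma \ast H \) that the paper displays just before the proposition without proof; so your argument supplies a detail the paper leaves to the references.

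Two small inaccuracies, neither of which affects the argument. First, \( \pi\colon U \to \Spec R_1\setminus\{0\} \) is a torsor under the finite linearly reductive group scheme \( H \), which in characteristic \( p\ge 7 \) need not be \'etale (e.g.\ \( H \) may contain \( \mu_p \)); so \( \pi \) is only finite flat in general — but flatness is all you use (for \( \Gamma|_U\simeq \pi^\ast\Lambda_1|_{U'} \), as in the proof of \cref{proposition:Gamma * H simeq End Gamma}, and pullback of Azumaya along any morphism is Azumaya). Second, \( [\Spec S_x/H_x] \) is not \( \cX_c\times_X\Spec R_x \) itself; by the paper's diagram it is the further pullback of \( \cX_c\times_X\Spec R_x \simeq \cX_c\times_{\cX_r}[\Spec R_{1}/\mu_n] \) along \( \Spec R_1\to[\Spec R_1/\mu_n] \). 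The composite \( \Spec S_x\to\cX_c \) is still flat, and jointly surjective as \( x \) ranges over closed points, which is all the descent step requires.
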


\begin{proof}
This follows from the isomorphism~\eqref{equation:Gamma vs cAc} and~\cref{remark:local criterion for Azumaya algebras}, since we know that \( \Gamma \) is an Azumaya algebra over \( S \) (see~\cref{section:Reiten-Van den Bergh}). Note that the morphism \( \Spec S \to \cX _{ c } \) is flat and jointly surjective if we vary \( \Spec R \to X \) over all points of \( X \).
\end{proof}

Let \( \pi = r \circ c \colon \cX _{ c } \to X \) be the coarse space morphism.
\begin{lemma}
    There is a canonical isomorphism of sheaves of \( \cO _{ X } \)-algebras as follows.
    \begin{align}
        \alpha
        \colon
        &
        \cA
        \to
        r _{ \ast } \cA _{ r } \label{equation:alpha}
        \\
        \beta \colon
        &
        \cA _{ r }
        \to
        c _{ \ast } \cA _{ c } \label{equation:beta}                
    \end{align}
\end{lemma}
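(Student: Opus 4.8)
The plan is to build $\alpha$ directly from the definition of $(\cX_r,\cA_r)$, and to reduce $\beta$ to the local identification $\Lambda_1=\Gamma^H$ that was the whole point of \cref{section:Reiten-Van den Bergh}; no global argument beyond faithfully flat descent along completions of local rings is needed.

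\emph{Constructing and checking $\alpha$.} Recall $\cX_r=[\Xtilde/\bGm]$ with $\Xtilde=\Spec_X\zentrum(\cAtilde)$, and that $\cA_r$ corresponds under $\coh\cX_r\simeq\coh^{\bGm}\Xtilde$ to the $\bZ$-graded $\cA$-algebra $\cAtilde=\bigoplus_{i\in\bZ}(\omega_\cA^{-1})^{(i)}$. Since $\bGm$ is linearly reductive and $\Xtilde\to X$ is affine by \cref{lemma:Xtilde is of finite type over X}, the morphism $r$ is cohomologically affine and $r_*$ carries a $\bGm$-equivariant (i.e.\ $\bZ$-graded) sheaf to its degree-$0$ part. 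As $(\omega_\cA^{-1})^{(0)}=\cA$, one gets $\zentrum(\cAtilde)_0=\zentrum(\cA)=\cO_X$ (consistent with $r$ being the coarse space morphism) and $r_*\cA_r=\cAtilde_0=\cA$, with ring structure the restriction to degree $0$ of that of $\cAtilde$, which is exactly the multiplication of $\cA$. This identification is the desired canonical isomorphism $\alpha$ of $\cO_X$-algebras.

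\emph{Constructing $\beta$.} Pullback along $c$ is monoidal, so $c^*\cA_r$ is a sheaf of algebras, the canonical map $c^*\cA_r\to(c^*\cA_r)^{\vee\vee}=\cA_c$ to the (already established) reflexive Azumaya algebra $\cA_c$ is a ring homomorphism, and the composite
\[
  \beta\colon \cA_r \longrightarrow c_*c^*\cA_r \longrightarrow c_*\bigl((c^*\cA_r)^{\vee\vee}\bigr)=c_*\cA_c,
\]
whose first arrow is the unit of the $(c^*,c_*)$-adjunction, is a homomorphism of $\cO_{\cX_r}$-algebras. To see $\beta$ is an isomorphism I would check this after base change along the morphisms $U=\Spec R\to X$ with $R$ the completion of a local ring at a closed point of $X$; these are jointly faithfully flat, and because $\cX_c$ is tame the formation of $c_*\cA_c$ commutes with arbitrary base change on $\cX_r$, so this reduction is legitimate. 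Over such a $U$ we are exactly in the setting of \cref{section:Reiten-Van den Bergh}: $\cX_r\times_XU\simeq[\Spec R_1/\mu_n]$, $\cA_r|_U\simeq\Lambda_1$, and, as exhibited by the $2$-Cartesian square in the commutative diagram preceding \eqref{equation:Gamma vs cAc}, after the further faithfully flat base change along the $\mu_n$-torsor $\Spec R_1\to[\Spec R_1/\mu_n]$ the morphism $c|_U$ becomes the coarse-space map $q\colon[\Spec S/H]\to\Spec R_1$, along which $\cA_c$ pulls back to $\Gamma$ by \eqref{equation:Gamma vs cAc}. Since $q_*$ of an $H$-equivariant sheaf is its sheaf of $H$-invariants, the pullback of $\beta|_U$ along $\Spec R_1\to[\Spec R_1/\mu_n]$ is the natural inclusion $\Lambda_1\hookrightarrow\Gamma^{\co\cO_H}$, an isomorphism by \eqref{equation:Lambda1 as invariant subring} in \cref{proposition:Gamma * H simeq End Gamma}; faithfully flat descent then gives that $\beta|_U$, hence $\beta$, is an isomorphism. (Alternatively one may note $c_*\cA_c$ is reflexive — its pullback to $\Spec R_1$ is $\Gamma^H$, a direct summand of the free $S$-module $\Gamma$, hence maximal Cohen–Macaulay and so reflexive over $R_1$ — and that $\beta$ is an isomorphism over the regular locus of $\cX_r$, whose complement has codimension $\ge2$, so reflexivity forces $\beta$ to be an isomorphism.)

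The step needing the most care is the base-change bookkeeping: that $c_*\cA_c$ and the formation of $\beta$ commute both with the non-flat base change $\cX_r\times_XU\to\cX_r$ (which is harmless because $\cX_c$ is tame) and with the $\mu_n$-torsor, and that the diagram relating the global and local pictures really is $2$-Cartesian as drawn. Multiplicativity throughout is automatic — unit of adjunction, monoidal pullback, the natural map to a reflexive hull over a normal base, and pushforward along a cohomologically affine morphism are all ring maps, and the identifications in \cref{section:Reiten-Van den Bergh} were set up as isomorphisms of algebras — so once the isomorphism of sheaves of modules is in hand, the isomorphism of sheaves of algebras follows with no extra work.
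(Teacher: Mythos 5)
Your proof is correct, and for \(\alpha\) it coincides with the paper's (both identify \(r_*\cA_r\) with the degree-\(0\) part of \(\cAtilde\), which is \(\cA\)). For \(\beta\) the construction is the same canonical map (the paper identifies \(c_*c^*\cA_r\simeq\cA_r\) via the projection formula and pushes forward the reflexive-hull map; your composite through the adjunction unit is that same map), but your verification that it is an isomorphism takes a genuinely different route: the paper never leaves the global picture — it shows \(c_*\cA_c\) is torsion free by a support argument, and then concludes from the facts that \(c\) is an isomorphism in codimension \(1\) and \(\cA_r\) is reflexive — whereas you reduce faithfully flatly along the completions \(\Spec R\to X\) and the \(\mu_n\)-torsor \(\Spec R_1\to[\Spec R_1/\mu_n]\) to the identification \(\Lambda_1=\Gamma^{\co\cO_H}\) of \eqref{equation:Lambda1 as invariant subring} in \cref{proposition:Gamma * H simeq End Gamma}. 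The paper's argument is shorter and sidesteps all base-change bookkeeping; yours costs that bookkeeping (commutation of \(c_*\) with the base changes, the \(2\)-Cartesian square, matching the pulled-back \(\beta\) with \(\lambda\mapsto\lambda\otimes1\)) but buys an explicit local description of \(c_*\cA_c\) as \(\Gamma^H\), and your parenthetical alternative (reflexivity of \(c_*\cA_c\) plus isomorphism off a codimension-\(\ge2\) locus) is essentially the paper's proof. One small slip: the base change \(\cX_r\times_XU\to\cX_r\) is in fact flat (completion of a local ring is flat over it), so it is not a ``non-flat'' base change; this is harmless, since either flat base change or the tameness/cohomological-affineness of \(c\) that you invoke justifies the commutation of \(c_*\) with it.
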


\begin{proof}
We have the following canonical isomorphism
    \begin{align}
        \cA
        =
        \left(
            \text{the degree \( 0 \) part of } \cAtilde
        \right)
        \simeq r _{ \ast } \cA _{ r }
    \end{align}
    which yields~\eqref{equation:alpha}.

    As \( \cA _{ c } = ( c ^{ \ast } \cA _{ r } ) ^{ \vee \vee } \), there is a canonical morphism
    \(
        c _{ \ast } c ^{ \ast } \cA _{ r } \to c _{ \ast } \cA _{ c }
    \).
    The left hand side is isomorphic to
    \(
        \cA _{ r }
    \)
    by the projection formula and the fact that \( c _{ \ast } \cO _{ \cX _{ c } } \simeq \cO _{ \cX _{ r } } \). Thus we obtain the morphism~\eqref{equation:beta}.
    Note that \( c _{ \ast } \cA _{ c } \) is torsion free. In fact, otherwise there is a non-trivial subsheaf \( \cT \hookrightarrow c _{ \ast } \cA _{ c } \) whose support is strictly smaller than \( \cX _{ r } \). The corresponding morphism
    \(
        c ^{ \ast } \cT \to \cA _{ c }
    \)
    would be non-zero, but this is a contradiction since \( c ^{ \ast } \cT \) is torsion and \( \cA _{ c } \) is torsion free.
    Finally, since \( c \) is an isomorphism in codimension \( 1 \), \( \cA _{ r } \) is reflexive, and \( c _{ \ast } \cA _{ c } \) is torsion free, it follows that~\eqref{equation:beta} is an isomorphism.
\end{proof}

We are now ready to the prove the main theorem of this section.

\begin{theorem}\label{theorem:equivalence of cA and cAr and cAc}
    There are equivalences of \( \bfk \)-linear categories
    \begin{align}
        \coh \cA \simeq \coh \cA _{ r } \simeq \coh \cA _{ c }.
    \end{align}
\end{theorem}

\begin{proof}
We first consider the equivalence $\coh \cA \simeq \coh \cA _{ r }$. By~\cref{theorem:general equivalence}, it suffices to take an arbitrary object
\(
    \cM \in \coh \cA _{ r }
\)
and prove
\begin{align}
    \eta = \eta _{ \cM } \colon G F ( \cM ) = r ^{ \ast } r _{ \ast } \cM \otimes _{ r ^{ \ast } \cA } \cA _{ r } \to \cM.
\end{align}
is an isomorphism. By faithfully flat descent, we can and will check this locally on \( X \).
Take a formal neighborhood \( \Spec R \to X \) as before and consider the pullback \( \eta '\) of \( \eta \) to \( \left[ \Spec R _{ 1 } / \mu _{ n } \right] \)
(see~\cref{figure:local vs global} for the various stacks which appear in this proof). Since \( \Spec R \to X \) is flat, we can rewrite \( \eta '\) as follows. To ease notation, put
\(
    r ' \coloneqq r \vert _{ \left[ \Spec R _{ 1 } / \mu _{ n } \right] }
\),
\(
    \cA ' _{ r } \coloneqq \cA _{ r } \vert _{ \left[ \Spec R _{ 1 } / \mu _{ n } \right] }
\), and
\(
    \cM ' \coloneqq \cM \vert _{ \left[ \Spec R _{ 1 } / \mu _{ n } \right] }
\).
\begin{align}
    \eta ' \colon { r ' } ^{ \ast } { r ' } _{ \ast } \cM ' \otimes _{ { r ' } ^{ \ast } \Lambda } \cA ' _{ r } \to \cM '
\end{align}
This is a morphism in the category
\(
    \module \cA ' _{ r }
\).
Let
\(
    M \in \module ^{ \bZ / n } \Lambda _{ 1 }
\)
be the object corresponding to \( \cM ' \) under the equivalence of categories
\begin{align}
    \coh \cA ' _{ r } \simeq \module ^{ \bZ / n } \Lambda _{ 1 }.
\end{align}
Then \( \eta '\) is identified with the following morphism of the category
\(
    \module ^{ \bZ / n }
    \Lambda _{ 1 }
\), which is known to be an isomorphism.
\begin{align}
    M _{ \overline{ 0 } } \otimes _{ \Lambda } \Lambda _{ 1 } \to M
\end{align}
In fact, this is the adjunction counit map of the equivalence of categories~\cref{corollary:Lambda1 vs Lambda}.

Next, we turn to the equivalence $\coh \cA_{r} \simeq \coh \cA _{c}$. By~\cref{theorem:general equivalence}, it suffices to take an arbitrary object
\(
    \cM \in \coh \cA _{ c }
\)
and show
\begin{align}
    \eta = \eta _{ \cM } \colon G F ( \cM ) = c ^{ \ast } c _{ \ast } \cM \otimes _{ c ^{ \ast } \cA _{ r } } \cA _{ c } \to \cM
\end{align}
is an isomorphism. Again, by faithfully flat descent, we can and will check this locally on \( \cX _{ r } \).
Take a formal neighborhood \( \Spec R \to X \) as before and consider the pullback \( \eta '\) of \( \eta \) to \( \Spec R _{ 1 } \). Since \( \Spec R _{ 1 } \to X \) is flat, we can rewrite \( \eta '\) as follows. To ease notation, put
\(
    c ' \coloneqq c \vert _{ \left[ \Spec S / H \right] }
\),
\(
    \cA ' _{ c } \coloneqq \cA _{ c } \vert _{ \left[ \Spec S / H \right] }
\), and
\(
    \cM ' \coloneqq \cM \vert _{ \left[ \Spec S / H \right] }
\)
(see~\cref{figure:local vs global} for the various stacks which appear in this proof).
\begin{align}
    \eta ' \colon { c ' } ^{ \ast } { c ' } _{ \ast } \cM ' \otimes _{ { c ' } ^{ \ast } \Lambda _{ 1 } } \cA ' _{ c } \to \cM '
\end{align}
This is a morphism in the category
\(
    \coh \cA ' _{ c }
\).
On the other hand, let
\(
    M \in \module(\Gamma \ast H)
\)
be the object corresponding to \( \cM ' \) under the equivalence of categories
\begin{align}
    \coh \cA ' _{ c } \simeq \module(\Gamma \ast H),
\end{align}
which follows from the isomorphism of algebras~\eqref{equation:Gamma ast H vs cA}.
Then \( \eta '\) is identified with the following morphism of \( \Gamma \ast H \)-modules.
\begin{align}\label{equation:adjunction counit_local}
    \Hom _{ \Gamma \ast H } \left( \Gamma, M \right) \otimes _{ \Lambda _{ 1 } } \Gamma \to M
\end{align}
In fact, this follows from the commutative diagram below.
The functors \( F '\) and \( G ' \) in the diagram are obtained as the restriction of the adjoint pair~\eqref{equation:adjoint pair (G,F)} to the formal neighborhood:
\begin{align}
    G ' \colon \module \Lambda _{ 1 } \leftrightarrows \module \cA ' _{ c } \colon F '
\end{align}
\begin{figure}[H]
    \centering
    \includegraphics[scale=1.2]{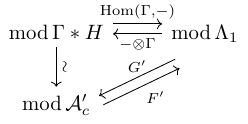}
\end{figure}
We already know that~\eqref{equation:adjunction counit_local} is an isomorphism, as it is the adjunction counit map of the equivalence of categories~\cref{corollary:Gamma vs Lambda1}.
\end{proof}

%
%
\section{The gerbe \( \cX _{ g } \)}
\label{section:gerbe}
In this section we associate a \( \mu _{ N } \)-gerbe
\begin{align}
    g \colon \cX _{ g } \to \cX _{ c }
\end{align}
to the pair
\(
    \left( \cX _{ c }, \cA _{ c } \right)
\)
of smooth tame (Artin) stacky surface and a sheaf of Azumaya algebras; we show that there is an equivalence of categories
\begin{align}\label{equation: coh cXg vs mod cAc}
    \coh ^{ ( 1 ) } \cX _{ g } \simeq \module \cA _{ c }
\end{align}
between the category of \emph{\(1\)-twisted coherent sheaves} on \( \cX _{ g }\) and the category of \( \cA _{ c } \)-modules. See~\cite[Definition~12.3.2]{MR3495343} for \(1\)-twisted coherent sheaves.

%
%
\subsection{Review of gerbes and twisted sheaves}\label{section:Review of gerbes and twisted sheaves}
For the convenience of the reader, we briefly explain some basics of gerbes and \(m\)-twisted sheaves mainly after~\cite{MR2388554}.
Let \( p \colon X \to \left( \Sch / \bfk \right) \) be an algebraic stack over \( \bfk \), where we equip \( \left( \Sch / \bfk \right) \) with the fppf topology.
Let \( f \colon \cX \to X \) be a 1-morphism of algebraic stacks over \( \bfk \). The \emph{relative inertia stack} \( I f \) of \( f \) is defined by the following pullback diagram of stacks.
\begin{figure}[H]
    \centering
    \includegraphics[scale=1.2]{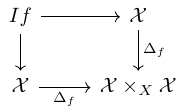}
\end{figure}
More explicitly (see~\cite[\href{https://stacks.math.columbia.edu/tag/034H}{Tag~034H}]{stacks-project} for details) \( If \) is equivalent to the category where
\begin{itemize}
    \item an object is a pair
\( ( x, g ) \) consisting of an object \( x \) of \( \cX \) and an automorphism \( g \) of \( x \) such that \( f ( g ) = 1 _{ f ( x ) } \), and
    \item a morphism from \( ( x, g ) \) to \( ( y, h ) \) is a morphism from \( x \) to \( y \) which is compatible with \( g \) and \( h \) in the obvious manner.
\end{itemize}
Let \( p \colon I f \to \cX \) be one of the projections. In view of the explicit description of  \( I f \) given above, the functor \( p \) simply forgets the automorphism \( g \). In this sense \( I f \) is the universal automorphism group of \( \cX \) relative to \( X \), and actually \( I f \) comes with the canonical structure of a relative group algebraic space over \( \cX \)
(see~\cite[\href{https://stacks.math.columbia.edu/tag/050R}{Tag~050R}]{stacks-project} for more detail).
Recall that the category \( \cX \) admits a standard Grothendieck topology where a collection of morphisms
\(
    ( x _{ i } \to x ) _{ i \in I }
\)
is a covering if and only if the collection of underlying morphisms of schemes is a covering of the site
\(
    \left(
        \Sch / \bfk
    \right)
\).
We will let \( \cC \) denote this site.
Then the functor \( p \colon I f \to \cX \) makes \( I f \) a sheaf over the site \( \cC \). Also we can naturally think of a quasi-coherent sheaf \( F \) on \( \cX \) as a sheaf over \( \cC \), which will be denoted by the same symbols by abuse of notation. Then there is a natural map
\begin{align}
    F \times I f \to F
\end{align}
of sheaves over \( \cC \) defined as follows.
Take an object \( x \in \cC \). Then a section \( g \) of \( I f \) over \( x \) is an automorphism of \( x \), so we obtain the pullback map
\(
    g ^{ \ast } \colon F ( x ) \simto F ( x )
\).
The map
\begin{align}
    F ( x ) \times I f ( x ) \to F ( x )
\end{align}
is simply given by \( ( f, g ) \mapsto g ^{ \ast } f \), and it is a right action by construction. We call it the \emph{inertia action} of the sheaf of groups \( I f \) on the sheaf \( F \).
For a closed subgroup scheme
\(
    i \colon D \hookrightarrow \bGm
\),  a \emph{\(D\)-gerbe} over \( X \), or a gerbe over \( X \) banded by the sheaf of groups \(D _{ \cX }\), is a stack
\(
    f \colon \cX \to X
\)
over
\( X \) which is a gerbe in the usual sense (see, say,~\cite[Definition~2.2.1.2]{MR2388554}) when seen as a stack over the site canonically associated to \( X \) similar to the definition of \( \cC \) above, and an isomorphism \( I f \simeq \cX \times D\) of group stacks over \( \cX \). Through the inertia action, quasi-coherent sheaves on a \( D \)-gerbe comes with a right action of \( D _{ \cX } \).
\begin{definition}[{\(=\)\cite[Definition~3.1.1]{MR2388554}}]\label{definition:twisted sheaf}
    A quasi-coherent sheaf \( F \) on a \( D \)-gerbe \( \cX \) over \( X \) is \( \cX \)-twisted (or \(1\)-twisted) if the right action of \( D _{ \cX } \) on \( F \) coincides with the left action of \( D _{ \cX } \) on \( F \) obtained by pulling back the
    \(
        \mathbb{ G } _{ a, \cX }
        =
        \cO _{ \cX }
    \)-action on \( F \) by the natural map of group schemes
    \(
        D
        _{
            \cX
        }
        \stackrel{ i }{ \hookrightarrow }
        \mathbb{ G }
        _{
            m, \cX
        }
        \to
        \mathbb{ G }
        _{
            a, \cX
        }
    \).
\end{definition}
Let \( \Dhat \) be the character group of \( D \). In general, a quasi-coherent sheaf \( F \) on \( \cX \) admits a decomposition
\begin{align}\label{equation:decomposition}
    F = \bigoplus _{ \chi \in \Dhat } F _{ \chi },
\end{align}
where \( F _{ \chi } \subseteq F \) is the subsheaf of sections on which the inertia action of \( D \) is by the character \( \chi \).
\begin{definition}\label{definition:m-twisted}
    A quasi-coherent sheaf \( F \) on a \( D \)-gerbe \( \cX \) over \( X \) is \( m \)-twisted if \( F = F _{ i ^{ m } } \), where \( i \in \Dhat \) is the standard embedding \( i \colon D \hookrightarrow \bGm \).
    The category of \( m \)-twisted coherent sheaves on \( \cX \) will be denoted by
    \(
        \coh ^{ ( m ) } \cX \subseteq \coh \cX
    \).
\end{definition}
\begin{remark}
    The closed subgroup scheme \(D \hookrightarrow \bGm\) is either
    \(
       \bGm
    \) itself or
    \(
       \mu _{ N }
    \) for some
    \(
       N \ge 1
    \). From now on we assume that
    \(
       D = \mu _{ N }
    \), so that
    \(
       \Dhat
       =
       \bZ / N \bZ
    \).
    For
    \(
       m
       \in
       \bZ
    \)
    we use the standard notation
    \(
       \mbar
       \in
       \bZ / N \bZ
       (
           =
           \Dhat
       )
    \)
    for its residue class, and put
    \(
       \coh ^{ ( \mbar ) } \cX
       =
       \coh ^{ ( m ) } \cX
    \).
\end{remark}
\begin{lemma}\label{lemma:twisted sheaves are acyclic}
    If
    \(
       F
       \in
       \coh ^{ ( \mbar ) } \cX
    \)
    for some
    \(
       \mbar \neq 0
       \in
       \Dhat
    \), then
    \(
       f _{ \ast } F
       =
       0
       \in
       \coh X
    \).
\end{lemma}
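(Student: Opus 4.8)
The plan is to check the vanishing fppf-locally on $X$ and thereby reduce to the case of a \emph{trivial} $\mu_N$-gerbe, where the assertion collapses to the elementary fact that the degree-zero part of a graded module concentrated in a nonzero degree is zero. So the whole argument is formal once two base-change compatibilities are in place.

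First I would record those compatibilities. (i) The morphism $f\colon\cX\to X$ is quasi-compact and quasi-separated (being a $\mu_N$-gerbe), so for any flat morphism $U\to X$, writing $f'\colon\cX\times_X U\to U$ for the base change, underived flat base change gives $(f_\ast F)|_U\simeq f'_\ast\bigl(F|_{\cX\times_X U}\bigr)$. (ii) The band $\mu_N$ and the inertia action on quasi-coherent sheaves are set up on the big fppf site of $X$, so pullback along $\cX\times_X U\to\cX$ carries an $m$-twisted sheaf to an $m$-twisted sheaf; more precisely the decomposition~\eqref{equation:decomposition} is compatible with this base change. Granting (i) and (ii), it suffices to prove $f'_\ast\bigl(F|_{\cX\times_X U}\bigr)=0$ after replacing $X$ by a single well-chosen fppf cover.

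Next I would choose an fppf cover $U\to X$ over which $\cX$ becomes trivial. Since $\cX\to X$ is a gerbe it admits an object over some fppf cover $U\to X$ (and any two objects are fppf-locally isomorphic), and a choice of such an object identifies $\cX\times_X U$ with the trivial $\mu_N$-gerbe $B_U\mu_N$ — the classifying stack of $(\mu_N)_U$ — compatibly with the bands. Because $\mu_N$ is diagonalizable with character group $\bZ/N\bZ$ in every characteristic, $\qcoh\bigl(B_U\mu_N\bigr)$ is identified with the category of $\bZ/N\bZ$-graded quasi-coherent $\cO_U$-modules; under this identification the inertia action is exactly the grading, so an $m$-twisted sheaf corresponds to a module concentrated in degree $\mbar$, and $f'_\ast$ is the functor ``degree-zero part'' (equivalently, $\mu_N$-invariants), which is exact since $\mu_N$ is linearly reductive. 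Hence, when $\mbar\neq 0$, the module $F|_{\cX\times_X U}$ lives in degree $\mbar$ and its degree-zero part vanishes, so $f'_\ast\bigl(F|_{\cX\times_X U}\bigr)=0$; as $U\to X$ is a cover, this gives $f_\ast F=0\in\coh X$.

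The argument is essentially bookkeeping; the only points needing care are precisely the two items in the second step — flat base change for the \emph{underived} pushforward $f_\ast$, and the compatibility of the inertia decomposition~\eqref{equation:decomposition} with base change — and I expect verifying these (both routine consequences of working on the big fppf site with $f$ quasi-compact and quasi-separated) to be the main, if minor, obstacle. One should also note in passing that $f_\ast F$ is indeed coherent, which holds because $f$ is a gerbe over a tame stack and its pushforward preserves coherence.
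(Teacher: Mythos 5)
Your argument is correct, but it takes a slightly different route from the paper's. The paper's proof never trivializes the gerbe: it observes that it suffices to show \( H^0(\cX\times_X V, F|_{\cX\times_X V})=0 \) for each affine open \( V\subset X \), and then the vanishing is immediate from the definition of \( H^0 \) on a stack together with the definition of an \( \mbar \)-twisted sheaf --- a compatible family of sections must be invariant under the inertia action, while the twisted condition forces the band \( \mu_N \) to act on it through a nontrivial character, so the sections vanish. You instead pass to an fppf cover \( U\to X \) trivializing the banded gerbe, identify \( \qcoh(B_U\mu_N) \) with \( \bZ/N\bZ \)-graded modules, and note that pushforward is the degree-zero part; this requires the two compatibilities you flag (underived flat base change for the qcqs morphism \( f \), or more simply the fact that a \( \mu_N \)-gerbe is tame so \( f_\ast \) is exact and commutes with base change, plus compatibility of the character decomposition with pullback), all of which are indeed routine. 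Both proofs ultimately rest on the same representation-theoretic fact that \( \mu_N \)-invariants of a module concentrated in a nonzero character vanish; the paper's version is shorter because it works directly with the definition of sections over \( X \) and needs no base-change bookkeeping, while yours is more structural and essentially reproves, locally, the full orthogonal decomposition of \( \coh\cX \) by characters (cf.~\cref{corollary:orthogonal decomposition}), making the exactness of \( f_\ast \) visible along the way. Your closing remark about coherence of \( f_\ast F \) is harmless but unnecessary, since the sheaf is shown to be zero.
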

\begin{proof}
It is enough to show
\(
   H ^{ 0 } ( \cX \times _{ X } V, F \vert _{ \cX \times _{ X } V } )
   =
   0
\)
for each affine
\(
   V
   \subset
   X
\).
The vanishing directly follows from the definition of
\(
   H ^{ 0 }
\)
for quasi-coherent sheaves on stacks and the definition of twisted sheaves given in~\cref{definition:twisted sheaf}.
\end{proof}
\begin{corollary}\label{corollary:orthogonal decomposition}
    There is an orthogonal decomposition of abelian categories as follows.
    \begin{align}\label{equation:orthogonal decomposition}
        \coh \cX
        =
        \bigoplus _{ \mbar \in \Dhat } \coh ^{ ( \mbar ) } \cX
    \end{align}
\end{corollary}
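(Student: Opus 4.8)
The plan is to upgrade the eigensheaf decomposition~\eqref{equation:decomposition} from quasi-coherent to coherent sheaves and then check the two remaining properties of an orthogonal decomposition of abelian categories: that the pieces $\coh^{(\mbar)}\cX$ are pairwise $\Hom$-orthogonal, and that each is a subquotient-closed full abelian subcategory of $\coh\cX$.

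First I would make the decomposition functorial and coherent. Since $\cX$ is a $\mu_N$-gerbe and $\mu_N$ is diagonalizable --- hence linearly reductive --- its group algebra $\cO_{\mu_N}^\vee\simeq\cO_{\bZ/N\bZ}$ carries a complete set of orthogonal idempotents $e_{\overline 0},\dots,e_{\overline{N-1}}$, exactly as in the proof of~\cref{proposition:Lambda1 * mun simeq End Lambda1}; for $F\in\coh\cX$ the summand $F_{\mbar}$ of~\eqref{equation:decomposition} is then $e_{\mbar}F$. The key observation is that an inertia element is an automorphism lying over the identity of the base, so pullback along it is $\cO$-linear; hence each $e_{\mbar}$ acts on $F$ as an $\cO_\cX$-linear projector, making $F_{\mbar}$ an $\cO_\cX$-submodule which, as a direct summand of $F$, is coherent. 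Since $\Dhat=\bZ/N\bZ$ is finite, this gives $F=\bigoplus_{\mbar\in\Dhat}F_{\mbar}$ in $\coh\cX$ with $F_{\mbar}\in\coh^{(\mbar)}\cX$, and since $F\mapsto e_{\mbar}F$ is the exact functor of cutting out a direct summand, $\coh^{(\mbar)}\cX$ is automatically closed under subobjects, quotients and extensions formed inside $\coh\cX$.

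Next I would prove orthogonality. Given $F\in\coh^{(\mbar)}\cX$ and $G\in\coh^{(\mbar')}\cX$ with $\mbar\neq\mbar'$, the sheaf $\cHom_{\cO_\cX}(F,G)$ is $(\mbar'-\mbar)$-twisted: an inertia element $g$ acts on a local homomorphism $\phi$ by $\phi\mapsto g\circ\phi\circ g^{-1}$, which multiplies the inertia eigencharacter of $G$ by the inverse of that of $F$. Writing $f\colon\cX\to X$ for the gerbe morphism and using $f^{\ast}\cO_X=\cO_\cX$, one has $\Hom_\cX(F,G)=H^0(\cX,\cHom_{\cO_\cX}(F,G))=H^0(X,f_{\ast}\cHom_{\cO_\cX}(F,G))$, which vanishes by~\cref{lemma:twisted sheaves are acyclic} since $\mbar'-\mbar\neq0$; the symmetric argument gives $\Hom_\cX(G,F)=0$. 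Alternatively this is immediate from the first step: a morphism $\phi\colon F\to G$ in $\coh\cX$ is equivariant for the inertia action and hence commutes with the projectors, so $\phi=\phi\circ e_{\mbar}=e_{\mbar}\circ\phi$, which factors through $e_{\mbar}G=0$. Combining the two steps, every object of $\coh\cX$ is canonically a finite direct sum of objects of the pairwise $\Hom$-orthogonal, subquotient-closed abelian subcategories $\coh^{(\mbar)}\cX$ indexed by $\mbar\in\Dhat$, which is precisely the asserted decomposition~\eqref{equation:orthogonal decomposition}.

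The only step that needs genuine care --- and where I expect the (mild) main obstacle --- is the first one: passing from the quasi-coherent statement~\eqref{equation:decomposition} to a decomposition by \emph{coherent} subsheaves that is moreover functorial, i.e.\ verifying that the inertia projectors $e_{\mbar}$ are $\cO_\cX$-linear and natural in $F$ (equivalently, that the inertia action is an action through $\cO_\cX$-module automorphisms). Once that is granted and~\cref{lemma:twisted sheaves are acyclic} is invoked, the remainder is formal.
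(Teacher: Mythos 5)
Your proof is correct and follows essentially the same route as the paper: the heart of the argument is that $\cHom_{\cO_\cX}(F,G)$ is $(\mbar'-\mbar)$-twisted (the paper cites~\cite[Lemma~3.1.1.7~(ii)]{MR2388554} where you verify it directly via the conjugation action) combined with~\cref{lemma:twisted sheaves are acyclic} to kill $\Hom_\cX(F,G)$. Your additional verification that the eigensheaf decomposition~\eqref{equation:decomposition} is by coherent, $\cO_\cX$-linear direct summands is a sound elaboration of what the paper takes as given from the displayed decomposition preceding the corollary.
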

\begin{proof}
    For    
    \(
       F
       \in
       \coh ^{ ( \mbar ) } \cX       
    \)
    and
    \(
       G
       \in
       \coh ^{ ( \nbar ) } \cX
    \)
    it follows that
    \(
       \cHom _{ \cX } ( F, G )
       \in
       \coh ^{ ( \nbar - \mbar ) } \cX
    \)
    (\cite[Lemma~3.1.1.7~(ii)]{MR2388554}).
    Hence
    \(
        \Hom _{ \cX } ( F, G )       
        =
        0
    \)
    by~\cref{lemma:twisted sheaves are acyclic}.
\end{proof}

%
%
\subsection{\( \cX _{ g } \) and the equivalence of categories}

Let \(g\colon \cG\to\cX_c\) be the gerbe associated to \(\cA_c\) in~\cite[Chapter~V.4.4]{MR0344253}. See also~\cite[Theorem~19]{MR4250478}. In order to make this explicit, let
\(
    p \colon \cX _{ c } \to \left( \Sch / \bfk \right)
\)
be the structure functor of the stack \( \cX _{ c } \).
An object of the category
\(
    \cG
\)
is a triple
\(
    ( x, \cE, \xi )
\), where
\(
    x
\)
is an object of \( \cX _{ c } \), \( \cE \) is a locally free sheaf on the scheme
\(
    p ( x )
\), and
\(
    \xi \colon \cEnd ( \cE ) \simto x ^{ \ast } \cA _{ c }
\)
is an isomorphism of sheaves of algebras on \( p ( x ) \).
A morphism from
\(
    ( x, \cE, \xi )
\)
to
\(
    ( y, \cF, \eta )
\)
is a pair of a morphism
\(
    f \colon x \to y
\)
in \( \cX _{ c } \) and an isomorphism
\(
    \varphi \colon \cE \simto p ( f ) ^{ \ast } \cF
\)
of locally free sheaves on \( p ( x ) \) which is compatible with the isomorphisms
\( \xi, \eta \) in the obvious sense.
The stack \( \cG \) comes with the obvious functor
\(
    g \colon \cG \to \cX _{ c }
\)
which sends the triple \( ( x, \cE, \xi ) \) to \( x \) and the pair \( ( f, \varphi ) \) to \( f \).
The stack \( \cG \) is also equipped with the tautological locally free sheaf \( \cE _{ g } \) together with the universal isomorphism
\(
    \cEnd ( \cE _{ g } ) \simto g ^{ \ast } \cA _{ c }
\).
Moreover \( \cG \) is a \( \bGm \)-gerbe over \( \cX _{ c } \) via \( g \), where the isomorphism
\(
    \bGm \simto \Aut ( x, \cE, \xi )
\)
is given by the following map.
\begin{align}
    \bGm \ni t \mapsto [ \cE \stackrel{ t \cdot }{ \simto } \cE ] \in \Aut ( \cE )
\end{align}
Therefore \( \cE _{ g } \) is a \(1\)-twisted sheaf on the gerbe \( \cG \).
We then obtain functors
\begin{align}
    F \colon \coh^{(1)} \cG \leftrightarrows \coh \cA _{ c } \colon G
\end{align}
given by
\[
    F(\cM)=g_{ \ast }(\cM\otimes_{\cO_{\cG}}\cE^\vee),\quad\quad G(\cN)=g^{ \ast }\cN\otimes_{g^{ \ast }\cA_c}\cE.
\]
Since \(\cN\to FG(\cN)\) is an isomorphism, we see \(F\) is fully faithful.
Next, by~\cite[Lemma~3.1.1.7]{MR2388554}, there is an equivalence of categories as follows.
\begin{align}
    g _{ \ast } \colon \coh ^{ ( 0 ) } \cG \simto \coh \cX _{ c }
\end{align}
See also~\cite[Lemma~12.3.3]{MR3495343} and its proof.
One can easily confirm that the quasi-inverse is given by \( g ^{ \ast } \), so that for each \( \cN \in \coh ^{ ( 0 ) } \cG \) the adjunction counit map
\(
    g ^{ \ast } g _{ \ast } \cN \to \cN
\)
is an isomorphism.
To show \(F\) is essentially surjective, we show \(GF(\cM)\to\cM\) is an isomorphism. We compute \(GF(\cM)=g^{ \ast }g_{ \ast }(\cM\otimes\cE^\vee)\otimes_{g^{ \ast }\cA_c}\cE\). Since \(\cM\) and \( \cE \) are \( 1 \)-twisted, \(\cM\otimes\cE^\vee\) is untwisted by~\cite[Lemma~3.1.1.7]{MR2388554}, so \(g^{ \ast }g_{ \ast }(\cM\otimes\cE^\vee)=\cM\otimes\cE^\vee\). As a result, \(GF(\cM)=\cM\otimes\cE^\vee\otimes_{g^{ \ast }\cA_c}\cE=\cM\).
Finally, suppose that the cohomological Brauer class of \( \cA _{ c } \) comes from a class in \( H ^{ 2 } ( \cX _{ c }, \mu _{ N } ) \). Then there is a \( \mu _{ N } \)-gerbe \( g \colon \cX _{ g } \to \cX _{ c } \), a natural morphism
\(
    \iota \colon \cX _{ g } \to \cG
\),
and the following equivalence of categories (see~\cite[Lemma~3.1.1.12]{MR2388554} and the paragraph before that).
\begin{align}
    \iota ^{ \ast } \colon \coh ^{ ( 1 ) } \cG \simto \coh ^{ ( 1 ) } \cX _{ g }
\end{align}
Thus we have obtained the desired gerbe \( \cX _{ g } \), which is a smooth tame (Artin) stack with finite linearly reductive stabilizers. Notice that if \( \cA \) is a matrix ring over the quotient field of \( X \), then generically, the Brauer class defined by \(\cA_c\) is trivial, and hence \(\cX_g\to\cX_c\) is an isomorphism. In this case \(\coh^{(1)} \cX_g \simeq \coh \cX_c\simeq \coh \cX_g\). This completes the proof of~\cref{theorem:main}.

\section{The Centre}
In this section we prove a special case of the Hochschild-Kostant-Rosenberg isomorphism in degree
\(
   0
\)
for algebraic stacks with finite stabilizers over fields.
This is presumably well known to experts, but for the lack of a suitable reference we include a proof.
Let
\(
   \cX
\)
be an algebraic stack over a field \( \bfk \).
Consider the following Cartesian diagram defining the inertia stack
\(
   I \cX
\), where
\(
    d
    =
    e
    =
    \Delta _{ \cX / \bfk }
\).
\begin{figure}[H]
    \centering
    \includegraphics[scale=1.2]{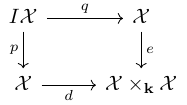}
    \caption{}
    \label{figure:defining square of inertia stack}
\end{figure}
    
\begin{proposition}\label{proposition:HKR in degree 0}
    Let \( \cX \) be a smooth algebraic stack with finite stabilizers over a field \( \bfk \).
    Then there is an isomorphisms of
    \(
       H ^{ 0 }
       \left(
           \cX,
           \cO
           _{
               \cX
           }
       \right)
    \)-modules as follows.
    \begin{align}\label{equation:HKR in degree zero}
        \Hom _{ \cX \times \cX } \left( \Delta _{ \cX \ast } \cO _{ \cX }, \Delta _{ \cX \ast } \cO _{ \cX } \right)
        &
        \simeq
        \bigoplus
        _{
            \substack{
                W \subset I \cX : \text{connected component}\\
                \dim W = \dim \cX
                }
        }
        \Hom _{ \cX }
        \left(
            p _{ \ast } \cO _{ W },
            \cO _{ \cX }
        \right)
    \end{align}
\end{proposition}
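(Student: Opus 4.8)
The plan is to compute the left-hand side by a local-to-global argument, reducing to the case of an étale-local quotient presentation and then identifying the Hochschild cohomology in degree $0$ with functions on the inertia stack. First I would recall that since $\cX$ is smooth with finite stabilizers, the diagonal $\Delta_{\cX}\colon\cX\to\cX\times\cX$ is unramified and the relative inertia stack $I\cX$ (the fiber product in~\eqref{equation:defining square of inertia stack}) is a finite unramified algebraic space over $\cX$ via $p$. The key input is that $\Delta_{\cX\ast}\cO_{\cX}$, viewed as a sheaf on $\cX\times\cX$, has a resolution (locally on $\cX\times\cX$) which computes $\mathcal{E}xt$-sheaves as exterior powers of the conormal bundle of the diagonal, but in degree $0$ this collapses: $\mathcal{H}om_{\cX\times\cX}(\Delta_{\cX\ast}\cO_{\cX},\Delta_{\cX\ast}\cO_{\cX})$ is supported on the image of the inertia and, after pushing forward, is governed by $p_{\ast}\cO_{I\cX}$.

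Concretely, I would proceed as follows. Step one: by flat descent, choose a smooth (étale, after shrinking) presentation; in our situation of interest $\cX$ is a tame quotient stack $[U/G]$ with $U$ smooth and $G$ finite linearly reductive, so $\cX\times\cX$ is presented by $[U\times U/(G\times G)]$ and $\Delta_{\cX\ast}\cO_{\cX}$ pulls back to the structure sheaf of the graph of the $G$-action, i.e.\ $\bigsqcup_{g\in G}\Gamma_g\subset U\times U$ (for $G$ a finite group; the linearly reductive group-scheme case is handled by replacing $G$ with its functor of points and using that $\mu_p$-actions on smooth schemes still have unramified graphs). Step two: compute $\Hom$ between two copies of this sheaf on $U\times U$. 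Because each graph $\Gamma_g$ is a smooth closed subscheme and distinct graphs meet transversally (or not at all) — this is where smoothness and finiteness of stabilizers enter — one gets $\Hom_{U\times U}(\mathcal{O}_{\Gamma_g},\mathcal{O}_{\Gamma_h})=0$ unless the intersection $\Gamma_g\cap\Gamma_h$ has the expected full dimension, which forces $g$ and $h$ to agree on the relevant component, and on that component the $\Hom$ is $\Hom_{U}(\mathcal{O}_{U^g},\mathcal{O}_U)$ where $U^g$ is the fixed locus. Step three: reassemble $G$-equivariantly. Taking $G\times G$-invariants (exact, since $G$ is linearly reductive and we are over a field where the relevant characters are trivial on the diagonal part), the sum over pairs $(g,h)$ with $\Gamma_g\cap\Gamma_h$ of full dimension, modulo the diagonal $G$-action, is exactly the indexing set of connected components $W\subset I\cX$ of dimension $\dim\cX$: such a $W$ corresponds to a conjugacy-type datum $(U^g,g)/C_G(g)$, and $p_{\ast}\cO_W$ is precisely $(\bigoplus_{g}\mathcal{O}_{U^g})^{G}$ restricted to that component. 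Matching the $\Hom_{\cX}(p_{\ast}\cO_W,\cO_{\cX})$ term by term then yields~\eqref{equation:HKR in degree zero}, and naturality gives $H^0(\cX,\cO_{\cX})$-linearity.

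The main obstacle I anticipate is Step two in the linearly reductive (non-étale) setting: when $G$ contains infinitesimal parts such as $\mu_p$ or $\alpha_p$-type subgroup schemes the "graphs" $\Gamma_g$ are not reduced closed subschemes indexed by a discrete set, and one must instead work with the scheme-theoretic relative inertia $I\cX\to\cX$ and verify directly that, $\cX$ being smooth, $I\cX$ is smooth (equivalently that $\Delta_{\cX}$ is a regular embedding so that the normal bundle exact sequence for $\mathcal{O}_{\Delta}$ behaves as in the classical HKR computation), and that the degree-$0$ part of $\RHom(\Delta_{\cX\ast}\cO_{\cX},\Delta_{\cX\ast}\cO_{\cX})$ equals $\Hom_{\cX}(p_{\ast}\cO_{I\cX},\cO_{\cX})$ after discarding the components of $I\cX$ of dimension $<\dim\cX$ (which contribute only to higher $\mathcal{E}xt$). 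Here I would lean on the finite-stabilizer hypothesis to ensure $p$ is finite, on smoothness to ensure $I\cX$ is a local complete intersection over $\cX$, and on a Koszul/local-duality computation — checked on the smooth presentation — to see that the only full-dimensional contribution to $\mathcal{E}xt^0$ comes from $p_{\ast}\cO_{W}$ with $\dim W=\dim\cX$. Everything else (exactness of invariants, passing between $\cX\times\cX$ and its presentation, the projection formula identifications) is routine once this local transversality/smoothness statement is in place.
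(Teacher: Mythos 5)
Your route---pass to a local quotient chart \([U/G]\), write \(\Delta_{\cX\ast}\cO_{\cX}\) as \(\bigoplus_{g\in G}\cO_{\Gamma_g}\) on \(U\times U\), compute Homs between graphs, and reassemble invariants---is genuinely different from the paper's argument, and in the Deligne--Mumford (e.g.\ characteristic-zero) setting it can be made to work, but two points need repair. First, the local formula is off: \(\Hom_{U\times U}(\cO_{\Gamma_g},\cO_{\Gamma_h})\) is the ring of functions on the union of connected components of \(U\) on which \(g=h\), equivalently \(\Hom_U\bigl(\cO_{U^{h^{-1}g}},\cO_U\bigr)\), not \(\Hom_U(\cO_{U^{g}},\cO_U)\); the only input is torsion-freeness of \(\cO_U\), and neither transversality of the graphs nor linear reductivity is needed in degree \(0\), since \(\Hom\) on a quotient stack is, by descent, the \(G\times G\)-invariants of \(\Hom\) on the chart. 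Second, you never construct a global comparison map, and matching terms chart by chart does not glue by itself; the natural fix is to use the adjunction \(\Hom_{\cX\times\cX}(\Delta_{\ast}\cO_{\cX},\Delta_{\ast}\cO_{\cX})\simeq\Hom_{\cX}(e^{\ast}d_{\ast}\cO_{\cX},\cO_{\cX})\) together with the canonical base-change morphism \(e^{\ast}d_{\ast}\cO_{\cX}\to p_{\ast}q^{\ast}\cO_{\cX}=p_{\ast}\cO_{I\cX}\) coming from the square~\eqref{equation:defining square of inertia stack}, and then verify on your charts that it is an isomorphism.

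The genuine gap is the generality of the statement. The proposition allows an arbitrary smooth algebraic stack with finite stabilizers over any field, and several assertions you rely on fail outside the Deligne--Mumford case: for \(\cX=B\mu_p\) in characteristic \(p\) there is no \'etale presentation, the diagonal is not unramified, and \(I\cX\simeq\mu_p\times B\mu_p\) is neither unramified nor smooth over \(\cX\). So your fallback plan---prove \(I\cX\) is smooth (or that \(\Delta_{\cX}\) behaves as in the classical HKR setup) and run a Koszul computation---aims at properties that are false, and fortunately also unnecessary in degree \(0\). What is actually needed is exactly the identification \(e^{\ast}d_{\ast}\cO_{\cX}\simeq p_{\ast}\cO_{I\cX}\), which the paper obtains presentation-freely: by \cref{lemma:derived loop space} the kernel computing \(\bL e^{\ast}d_{\ast}\) is \(d_{\ast}\cO_{\cX}\otimes^{\bL}_{\cO_{\cX\times\cX}}e_{\ast}\cO_{\cX}\), whose \(\cH^{0}\) is the underived tensor product \((d\circ p)_{\ast}\cO_{I\cX}\), and applying \(\pr_{2\ast}\) is harmless because everything is supported finitely over \(\cX\) (alternatively, since \(\Delta\) is affine its pushforward commutes with arbitrary underived base change). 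The last step---discarding the components of \(I\cX\) of dimension \(<\dim\cX\)---is precisely your torsion argument, i.e.\ purity of \(\cO_{\cX}\) coming from smoothness. Replacing the chart-by-chart analysis and the smooth/lci detour by this base-change identification closes the gap and proves the proposition in the stated generality; your explicit \([U/G]\) computation then survives as a useful consistency check and as an alternative proof in the tame Deligne--Mumford case.
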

\begin{proof}
    Note first that
    \begin{align}
        \Hom _{ \cX \times \cX }
        \left(
            d _{ \ast } \cO _{ \cX }, e _{ \ast } \cO _{ \cX }
        \right)
        \simeq
        \Hom _{ \cX }
        \left(
            e ^{ \ast } d _{ \ast } \cO _{ \cX }, \cO _{ \cX }
        \right).
    \end{align}
    The \emph{derived} functor \( \bL e ^{ \ast } d _{ \ast } \) is represented by the convolution kernel
    \begin{align}
        K \coloneqq \Gamma _{ d \ast } \cO _{ \cX } \ast S _{ \ast } \left( \Gamma _{ e \ast } \cO _{ \cX } \right),
    \end{align}
    where
    \(
        \Gamma _{ f } \colon X \to X \times Y
    \)
    denotes the graph of a morphism \( f \colon X \to Y \),
    \(
        \ast
    \)
    denotes the convolution of kernels, and
    \(
        S \colon \cX \times ( \cX \times \cX )
        \simto
        ( \cX \times \cX ) \times \cX
    \)
    exchanges the factors (see, say,~\cite[Section~5]{MR2244106}. We think of the kernels as right modules).
    In particular,
    \begin{align}\label{equation:FM transform}
        \bL e ^{ \ast } d _{ \ast } \cO _{ \cX }
        \simeq
        \Phi _{ K } ( \cO _{ \cX } )
        \simeq
        \bR \pr _{ 2 \ast } K.
    \end{align}
    Then there is an isomorphism between \( K \) and the structure sheaf of the derived loop space as in~\cref{lemma:derived loop space}
    below. It then follows that
    \begin{align}\label{equation:HKR in degree 0}
        \cH ^{ 0 } ( K ) \simeq  d _{ \ast } \cO _{ \cX } \otimes _{ \cO _{ \cX \times \cX } } e _{ \ast } \cO _{ \cX }
        \simeq
        ( d \circ p ) _{ \ast } \cO _{ I \cX },
    \end{align}
    so that
    \begin{align}\label{equation:key isomorphism}
        e ^{ \ast } d _{ \ast } \cO _{ \cX }
        =
        \cH ^{ 0 } \left( \bL e ^{ \ast } d _{ \ast } \cO _{ \cX } \right)
        \stackrel{\text{\eqref{equation:FM transform}}}{\simeq}
        \cH ^{ 0 } \left( \bR \pr _{ 2 \ast } K \right)
        \simeq
        \pr _{ 2 \ast } \cH ^{ 0 } ( K )
        \stackrel{\text{\eqref{equation:HKR in degree 0}}}{\simeq}
        p _{ \ast } \cO _{ I \cX }.
    \end{align}
    The second-to-the-last isomorphism follows from that \( K \) is supported on the closed substack
    \(
        \Delta _{ \cX } \subset \cX \times \cX
    \), which is finite over \( \cX \).
    Thus we see
    \begin{align}
        \Hom _{ \cX }
        \left(
            e ^{ \ast } d _{ \ast } \cO _{ \cX }, \cO _{ \cX }
        \right)
        &
        \stackrel{\text{\eqref{equation:key isomorphism}}}{\simeq}
        \Hom _{ \cX } ( p _{ \ast } \cO _{ I \cX }, \cO _{ \cX } )
        \simeq
        \bigoplus _{
            \substack{ W \subseteq I \cX\\ \dim W = \dim \cX }
            }
        \Hom _{ \cX } ( p \vert _{ W \ast } \cO _{ W }, \cO _{ \cX } ),
    \end{align}
    where the last isomorphism follows from the smoothness of
    \(
       \cX
    \),
    which implies that
    \(
       \cO
       _{
           \cX
       }
    \)
    is pure.
\end{proof}

\begin{lemma}\label{lemma:derived loop space}
    There is an isomorphism as follows.
    \begin{align}\label{equation:loop space}
        K \simeq d _{ \ast } \cO _{ \cX } \otimes ^{ \bL } _{ \cO _{ \cX \times \cX } } e _{ \ast } \cO _{ \cX }
    \end{align}
\end{lemma}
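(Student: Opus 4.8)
\emph{(Plan.)} The plan is to unwind the convolution defining \( K \) and identify it with the structure sheaf of the derived self-intersection \( \cX \times ^{ \bR } _{ \cX \times \cX } \cX \) of the diagonal \( d = e \); pushed forward to \( \cX \times \cX \), that structure sheaf is by definition \( d _{ \ast } \cO _{ \cX } \otimes ^{ \bL } _{ \cO _{ \cX \times \cX } } e _{ \ast } \cO _{ \cX } \). I want to stress that the lemma does \emph{not} assert a Hochschild--Kostant--Rosenberg splitting of this object into \( \bigoplus _{ i } \Omega ^{ i } _{ \cX } [ i ] \) -- which, as noted in the Introduction, is unavailable for stacks in positive characteristic -- so the argument must stay at the level of the six functors and must \emph{not} pass through a Koszul resolution of the diagonal.

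\emph{(Two kernels, one functor.)} In the formalism of convolution of integral kernels used in the proof of \cref{proposition:HKR in degree 0} (see~\cite[Section~5]{MR2244106}; all kernels are right--module kernels), \( \Gamma _{ d \ast } \cO _{ \cX } \) is the kernel of \( d _{ \ast } \), the transposed kernel \( S _{ \ast } \Gamma _{ e \ast } \cO _{ \cX } \) is the kernel of \( \bL e ^{ \ast } \), and convolution computes composition of transforms; hence \( K \) is a kernel for \( \bL e ^{ \ast } \circ d _{ \ast } \colon \derived ( \cX ) \to \derived ( \cX ) \). On the other hand, \( d _{ \ast } \cO _{ \cX } \otimes ^{ \bL } _{ \cO _{ \cX \times \cX } } e _{ \ast } \cO _{ \cX } \) is also a kernel for \( \bL e ^{ \ast } \circ d _{ \ast } \): for \( F \in \derived ( \cX ) \) the projection formula gives \( \pi _{ 1 } ^{ \ast } F \otimes ^{ \bL } d _{ \ast } \cO _{ \cX } \simeq d _{ \ast } F \) (as \( \pi _{ 1 } d = \id \)) and then \( d _{ \ast } F \otimes ^{ \bL } e _{ \ast } \cO _{ \cX } \simeq e _{ \ast } \bL e ^{ \ast } d _{ \ast } F \), so that \( \bR \pi _{ 2 \ast } \) of the latter is \( \bL e ^{ \ast } d _{ \ast } F \) (as \( \pi _{ 2 } e = \id \)).

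\emph{(Identifying the kernels.)} To pass from this equality of functors to an isomorphism of the kernels, I would compute the convolution directly. Since \( S _{ \ast } \Gamma _{ e \ast } \cO _{ \cX } \) is a transposed graph kernel, convolution with it is, by flat base change along the projections and the projection formula along the pertinent closed immersions, just a derived pullback: \( K \simeq \bL ( \id _{ \cX } \times e ) ^{ \ast } \Gamma _{ d \ast } \cO _{ \cX } \), where \( \id _{ \cX } \times e \colon \cX \times \cX \to \cX \times ( \cX \times \cX ) \). Writing \( \Gamma _{ d } = ( \id _{ \cX } \times d ) \circ \Delta _{ \cX } \) and applying derived base change, the relevant fibre--product square is \( \id _{ \cX } \) crossed with the derived enhancement of the square~\eqref{equation:defining square of inertia stack} defining \( I \cX \); its total space is \( \cX \times ( \cX \times ^{ \bR } _{ \cX \times \cX } \cX ) \), and chasing \( \Delta _{ \cX \ast } \cO _{ \cX } \) through it carries \( K \) to the push--forward along \( d \circ p \) of \( \cO _{ \cX \times ^{ \bR } _{ \cX \times \cX } \cX } \), i.e.\ to \( d _{ \ast } \cO _{ \cX } \otimes ^{ \bL } _{ \cO _{ \cX \times \cX } } e _{ \ast } \cO _{ \cX } \). (Alternatively one may invoke uniqueness of Fourier--Mukai kernels: both kernels are supported on the diagonal \( \Delta _{ \cX } \), which is representable and of finite type over \( \cX \), so the comparison can be made there.) The one point needing care is precisely this last base change, which is not Tor--independent -- its excess is accounted for exactly by the derived fibre product, which is why the statement is phrased with \( \otimes ^{ \bL } \) rather than \( \otimes \) -- together with the routine check that the integral--transform formalism of~\cite{MR2244106} applies to \( \cX \) as in \cref{proposition:HKR in degree 0}; everything else is formal.
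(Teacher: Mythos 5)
Your overall strategy is the same as the paper's -- prove the lemma by directly computing the convolution kernel using graph factorizations, base change, and the projection formula -- but the execution differs in a way worth noting. The paper factors \( \Gamma _{ d } = ( d \times 1 _{ \cX } ) \circ d \) and \( \Gamma _{ e } = ( 1 _{ \cX } \times e ) \circ e \) and arranges the computation on \( \cX ^{ 4 } \) and \( \cX ^{ 3 } \) so that every base change used is along a flat projection (e.g.\ \( p _{ 1 2 3 } ^{ \ast } ( d \times 1 _{ \cX } ) _{ \ast } \simeq ( d \times 1 \times 1 ) _{ \ast } p _{ 1 2 } ^{ \ast } \)); the only other ingredient is the derived projection formula for the closed immersions \( d \times 1 \times 1 \) and \( 1 \times e \), and the answer \( d _{ \ast } \cO _{ \cX } \otimes ^{ \bL } e _{ \ast } \cO _{ \cX } \) drops out without ever invoking a non-Tor-independent base change or derived fibre products. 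You instead first collapse one factor via the (correct, and provable by the same flat-base-change-plus-projection-formula argument) fact that convolution with a transposed graph kernel is \( \bL ( \id _{ \cX } \times e ) ^{ \ast } \), and then you must pass through the derived self-intersection \( \cX \times ^{ \bR } _{ \cX \times \cX } \cX \) and a base-change isomorphism over a square that is \emph{not} Tor-independent. That step is where your proof carries its real burden: you need derived base change for derived Cartesian squares of these tame Artin stacks, a heavier piece of machinery than anything the paper uses, and your ``chasing \( \Delta _{ \cX \ast } \cO _{ \cX } \) through it'' compresses a second base-change/projection-formula step that should be spelled out. It can be made rigorous, but the paper's routing through flat projections shows the detour is unnecessary.

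Two smaller caveats. The opening observation that both objects are kernels for \( \bL e ^{ \ast } d _{ \ast } \), and the parenthetical fallback to ``uniqueness of Fourier--Mukai kernels,'' do not constitute an argument: two kernels inducing isomorphic functors need not be isomorphic, and no kernel-uniqueness statement is available here (certainly not for stacks), so that alternative should be deleted; your main line does not rely on it, which is why the proposal still stands. Also, your identification of the pushforward of \( \cO _{ \cX \times ^{ \bR } _{ \cX \times \cX } \cX } \) with \( d _{ \ast } \cO _{ \cX } \otimes ^{ \bL } _{ \cO _{ \cX \times \cX } } e _{ \ast } \cO _{ \cX } \) is correct but is itself another instance of derived base change plus the projection formula and deserves a line of justification rather than ``by definition.''
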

\begin{proof}
    In this proof, all functors are derived.
    By definition, \( K \) is the following object of \( \derived ^{ \bounded } \coh ( \cX \times \cX ) \); the isomorphism \( S \) disappears due to the obvious symmetry of the object \( \Gamma _{ e \ast } \cO _{ \cX } \).
    \begin{align}
        p _{ 1 4 \ast }
        \left(
            p _{ 1 2 3 } ^{ \ast } \Gamma _{ d \ast } \cO _{ \cX } \otimes p _{ 2 3 4 } ^{ \ast } \Gamma _{ e \ast } \cO _{ \cX }
        \right)
    \end{align}
    We use the following equalities.
    \begin{align}
        \Gamma _{ d } = ( d \times 1 _{ \cX } ) \circ d\\
        \Gamma _{ e } = ( 1 _{ \cX } \times e ) \circ e
    \end{align}
    We also use the following isomorphism of (derived) functors.
    \begin{align}
        p _{ 1 2 3 } ^{ \ast } ( d \times 1 _{ \cX } ) _{ \ast }
        \simeq
        ( d \times 1 _{ \cX } \times 1 _{ \cX } ) _{ \ast } p _{ 1 2 } ^{ \ast }
    \end{align}
    This follows from the following Cartesian square and the flat base change theorem.
    \begin{figure}[H]
        \centering
        \includegraphics[scale=1.2]{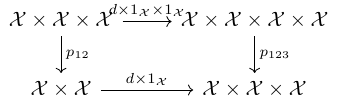}
    \end{figure}
    hence
    \begin{align}
        K \simeq p _{ 1 4 \ast }
        \left(
            ( d \times 1 _{ \cX } \times 1 _{ \cX } ) _{ \ast } p _{ 1 2 } ^{ \ast } d _{ \ast } \cO _{ \cX } \otimes p _{ 2 3 4 } ^{ \ast } \Gamma _{ e \ast } \cO _{ \cX }
        \right)\\
        \simeq
        p _{ 1 4 \ast } ( d \times 1 _{ \cX } \times 1 _{ \cX } ) _{ \ast }
        \left(
            p _{ 1 2 } ^{ \ast } d _{ \ast } \cO _{ \cX } \otimes ( d \times 1 _{ \cX } \times 1 _{ \cX } ) ^{ \ast } p _{ 2 3 4 } ^{ \ast } \Gamma _{ e \ast } \cO _{ \cX }
        \right)\\
        \simeq
        p _{ 1 3 \ast }
        \left(
            p _{ 1 2 } ^{ \ast } d _{ \ast } \cO _{ \cX } \otimes
            { ( 1 \times e ) } _{ \ast } e _{ \ast } \cO _{ \cX }
        \right)\\
        \simeq
        p _{ 1 3 \ast } { ( 1 \times e ) } _{ \ast }
        \left(
            { ( 1 \times e ) } ^{ \ast } p _{ 1 2 } ^{ \ast } d _{ \ast } \cO _{ \cX }
            \otimes
            e _{ \ast } \cO _{ \cX }
        \right)\\
        \simeq
        d _{ \ast } \cO _{ \cX }
        \otimes
        e _{ \ast } \cO _{ \cX }.
    \end{align}
\end{proof}

\begin{remark}\label{remark:HKR}
    We can further compute the summands of the right-hand-side of~\eqref{equation:HKR in degree 0}
    as follows, if the stabilizer groups of
    \(
       \cX
    \)
    are assumed to be \'etale.
    \begin{align}
        \Hom _{ \cX }
        \left(
            p \vert _{ W \ast }
            \cO _{ \cX },
            \cO _{ W }
        \right)
        \simeq
        \Hom _{ W } ( \cO _{ W }, ( p | _{ W } ) ^{ ! } \cO _{ \cX } )
        \simeq
        \End _{ W } ( \cO _{ W } )
        \simeq
        H ^{ 0 } ( W, \cO _{ W } )
    \end{align}
\end{remark}

\begin{example}\label{example:BG}
    Through the isomorphism~\eqref{equation:HKR in degree zero} the algebra structure on the left hand side is transferred to the right hand side. Here we describe it explicitly for the classifying stack
    \(
        \cX
        =
        B G
    \),
    where we let
    \(
       \bfk
    \)
    be a field and
    \(
       G
    \)
    a finite group whose order is coprime to the characteristic of
    \(
       \bfk
    \).
    By abuse of notation, we let
    \(
       G
    \)
    denote the constant group scheme
    \(
       \Spec
       \left(
           \bfk ^{
               G
           }
       \right)
    \).         The isomorphism of Hopf algebras
        \begin{align}\label{equation:OG = kG dual}
        \bfk ^{
            G
        }
        =
        \cO _{
            G
        }
        \simeq
        \left(
            \bfk
            G
        \right) ^{
            \vee
        }
    \end{align}
    implies the following equivalence of symmetric monoidal categories.
    \begin{align}\label{equation:Morita equivalence for BG}
        \coh
        B G
        \simeq
        \coh _{
            G
        }
        \Spec \bfk
        &
        \simeq
        \representations _{
            \bfk
        }
        G
        \simeq
        \module
        \bfk G\\
        \left(
            V,
            V
            \to
            V
            \otimes _{
                \bfk
            }
            \left(
                \bfk ^{
                    G
                }
            \right)
        \right)
        &
        \mapsto
        \left(
            V,
            \bfk G
            \otimes
            V
            \to
            V
        \right)
    \end{align}

    \begin{lemma}\label{lemma:HH0BG vs Z kG}
        There is an isomorphism of
        \(
           \bfk
        \)-algebras
        \(
           \HH ^{ 0 }
           \left(
            B G
           \right)
           \simeq
           \zentrum
           \left(
            \bfk G
           \right)
        \).
    \end{lemma}

    \begin{proof}
        This is a consequence of the Morita invariance of the Hochschild cohomology
        \(
           \bfk
        \)-algebra as follows.
        \begin{align}\label{equation:HH0 BG vs Z kG}
            \HH ^{
                0
            }
            \left(
                B G
            \right)
            =
            \HH ^{
                0
            }
            \left(
                \coh
                B G
            \right)
            \stackrel{\text{\eqref{equation:Morita equivalence for BG}}}{\simeq}
            \HH ^{
                0
            }
            \left(
                \module
                \bfk
                G
            \right)
            =
            \HH ^{ 0 }
            \left(
                \bfk G
            \right)
            \simeq
            \zentrum
            \left(
                \bfk
                G
            \right)
        \end{align}
    \end{proof}
    
    \cref{figure:defining square of inertia stack}
    for
    \(
        \cX
        =
        B G
    \)
    is isomorphic to the following Cartesian square, where the stack
    \(
       \left[
        \frac{
            G
        }{
            G
        }
       \right]
    \)
    is the quotient stack with respect to the conjugacy action. One can directly confirm this by, say, using the explicit description of the inertia stack given in the beginning of~\cref{section:Review of gerbes and twisted sheaves}.
\begin{figure}[H]
    \centering
    \includegraphics[scale=1.2]{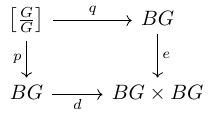}
    \caption{}
    \label{figure:inertia stack of BG}
\end{figure}

As the diagonal map
\begin{align}
    d
    =
    e
    \colon
    B G
    \to
    B G \times B G
    \simeq
    B
    \left(
     G \times G
    \right)
\end{align}
is induced by the diagonal homomorphism
\(
   G
   \to
   G \times G
\),
which is injective, one can easily check that it is representable, finite and flat. This implies the following natural isomorphism of exact functors.
\begin{align}\label{equation:base change formula for BG}
    e ^{
        \ast
    }
    \circ
    d _{
        \ast
    }
    \simeq
    q _{
        \ast
    }
    \circ
    p ^{
        \ast
    }
    \colon
    \coh B G
    \to
    \coh B G
\end{align}
Hence
\begin{align}\label{equation:computation of HH0(BG)}
    \HH ^{
        0
    }
    \left(
        B G
    \right)
    =
    \Hom _{
        B G
        \times
        B G
    }
    \left(
        d _{
            \ast
        }
        \cO _{
            B G
        },
        e _{
            \ast
        }
        \cO _{
            B G
        }
    \right)
    \simeq
    \Hom _{
        B G
    }
    \left(
        e ^{
            \ast
        }
        d _{
            \ast
        }
        \cO _{
            B G
        },
        \cO _{
            B G
        }
    \right)\\
    \stackrel{
        \text{\eqref{equation:base change formula for BG}}
    }{\simto}
    \Hom _{
        B G
    }
    \left(
        q _{
            \ast
        }
        p ^{
            \ast
        }
        \cO _{
            B G
        },
        \cO _{
            B G
        }
    \right)
    \simeq
    \Hom _{
        B G
    }
    \left(
        q _{
            \ast
        }
        \cO _{
            \left[
                \frac{
                    G
                }{
                    G
                }
            \right]
        },
        \cO _{
            B G
        }
    \right).
\end{align}

On the other hand, based on the concrete description of the group structure of
\(
   I \cX
\)
relative to
\(
   \cX
\)
given in~\cite[\href{https://stacks.math.columbia.edu/tag/050R}{Tag~050R}]{stacks-project},
one can directly confirm that the multiplication map
\begin{align}
    I B G
    \times _{
        B G
    }
    I B G
    \to
    I B G
\end{align}
is isomorphic to the following map, where the source of the map is the quotient stack with respect to the diagonal conjugacy action:
\begin{align}\label{equation:group structure of IBG}
    \left[
        \frac{
            G
            \times
            G
        }{
            G
        }
    \right]
    &
    \to
    \left[
        \frac{
            G
        }{
            G
        }
    \right]\\
    (
        g,
        h
    )
    &
    \mapsto
    g h
\end{align}
The group structure on
\(
   I B G
   =
   \left[
       \frac{
           G
       }{
           G
       }
   \right]
\)
given by~\eqref{equation:group structure of IBG}
corresponds to the coalgebra structure
\begin{align}\label{equation:coalgebra structure of q* O IX}
    q _{
        \ast
    }
    \cO _{
     \left[
         \frac{
             G
         }{
             G
         }
     \right]
    }
    \to
    q _{
        \ast
    }
    \cO _{
     \left[
         \frac{
             G
         }{
             G
         }
     \right]
    }
    \otimes
    q _{
        \ast
    }
    \cO _{
     \left[
         \frac{
             G
         }{
             G
         }
     \right]
    }
\end{align}
on
\(
   q _{
       \ast
   }
   \cO _{
    \left[
        \frac{
            G
        }{
            G
        }
    \right]           
   }
\), which in turn yields the
\(
    \bfk
\)-algebra structure on
\(
     \Hom _{
         B G
     }
     \left(
         q _{
             \ast
         }
         \cO _{
             \left[
                 \frac{
                     G
                 }{
                     G
                 }
             \right]
         },
         \cO _{
             B G
         }
     \right)
\).
We then transfer it to
\(
   \HH ^{
       0
   }
   \left(
       B G
   \right)
\)
through the sequence of isomorphisms~\eqref{equation:computation of HH0(BG)}.
    
    \begin{proposition}\label{proposition:BGcentre}
        The
        \(
           \bfk
        \)-algebra structure of
        \(
           \HH ^{
               0
           }
           \left(
            B G
           \right)
        \)
        thus obtained coincides with the standard one.
    \end{proposition}
    
    \begin{proof}
        As
        \(
            \HH ^{
                0
            }
            \left(
             B G
            \right)
        \)
        with the standard
        \(
           \bfk
        \)-algebra structure is isomorphic to
        \(
            \zentrum
            \left(
                \bfk
                G
            \right)
        \)
        under the isomorphism of~\cref{lemma:HH0BG vs Z kG}, we may and will make the comparison on the
        \(
        \bfk
        \)-vector space
        \(
            \zentrum
            \left(
            \bfk
            G
            \right)
        \).

    Under the equivalence~\eqref{equation:Morita equivalence for BG} the object
    \(
        q _{
            \ast
        }
        \cO _{
            \left[
                \frac{
                    G
                }{
                    G
                }
            \right]
        }
        \in
        \coh B G
    \)
    corresponds to
    \(
        \bfk ^{
            G
        }
        \in
       \module
       \bfk
       G
    \)
    on which
    \(
       G
    \)
    is acting by conjugation. Hence
    \begin{align}
        \Hom _{
            B G
        }
        \left(
            q _{
                \ast
            }
            \cO _{
                \left[
                    \frac{
                        G
                    }{
                        G
                    }
                \right]
            },
            \cO _{
                B G
            }
        \right)
        \simeq        
        \Hom _{
            \module
            \bfk
            G
        }
        \left(
            \bfk ^{
                G
            },
            \bfk
        \right)\\
        =
        \Hom _{
            \module \bfk
        }
        \left(
            \bfk ^{
                G
            },
            \bfk
        \right) ^{
            G
        }
        \stackrel{\text{\eqref{equation:OG = kG dual}}}{\simeq}
        \left(
            \bfk
            G
        \right) ^{
            G
        }
        =
        \zentrum
        \left(
            \bfk
            G
        \right).        
    \end{align}
    Moreover we have the following commutative diagram in which all vertical arrows are isomorphisms and the bottom horizontal arrow
    \(
       \mtilde
    \)
    is defined by the commutativity of the diagram.
    \begin{figure}[H]
        \centering
        \includegraphics[scale=1.2]{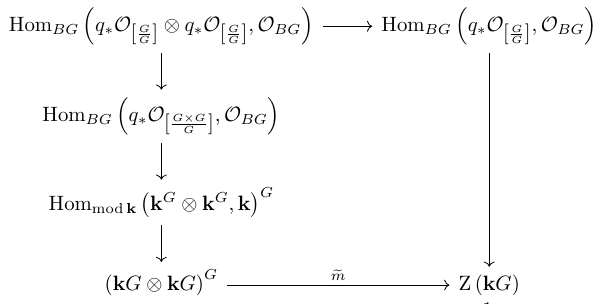}
    \end{figure}    

    One can confirm that the map
    \(
       \mtilde
    \)
    is nothing but the restriction of the multiplication map of the group ring
    \(
       \bfk
       G
    \)
    to the
    \(
       G
    \)-invariant subspaces. This concludes the proof, since what we wanted to show is that the map
    \(
       \mtilde
    \)
    post-composed with the following map coincides with the ring structure of
    \(
       \zentrum
       \left(
           \bfk
           G
       \right)
    \).
    \begin{align}
        \zentrum
        \left(
            \bfk
            G
        \right)
        \otimes _{
            \bfk
        }
        \zentrum
        \left(
            \bfk
            G
        \right)
        =
        \left(
            \bfk
            G
        \right) ^{
            G
        }
        \otimes _{
            \bfk
        }
        \left(
            \bfk
            G
        \right) ^{
            G
        }
        \hookrightarrow
        \left(
            \bfk
            G
            \otimes
            \bfk
            G
        \right) ^{
            G
        }
    \end{align} 
\end{proof}
\end{example}

%
%
\section{Applications and complements}

In this section we restrict to our base field \( \bfk \) having characteristic \(0\). Our main result,~\cref{theorem:main}, allows us to associate a stack to a tame order of global dimension \(2\). In this section, we first discuss the converse construction given in~\cite{MR2018958}. This pair of operations is similar to a Galois connection. The main results of this section are~\cref{corollary:combination1} and~\cref{corollary:combination2}, which describe what happens when these operations are applied in conjunction to create a closure operation.

We first recall how to associate an order to a finite flat scheme cover \(U \to \cX \) of a stack \( \cX \). The cover induces a groupoid of schemes:
    \begin{figure}[H]
        \centering
        \includegraphics[scale=1.2]{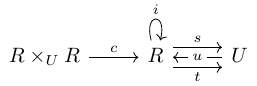}
        \caption{}
        \label{figure:groupoid of schemes}
    \end{figure}
The objects and morphisms in~\cref{figure:groupoid of schemes} are defined as follows.
\begin{align}
    R
    &
    \coloneqq
    U
    \times
    _{
        \cX
    }
    U
    \\
    s
    &
    \coloneqq
    \pr _{ 1 }
    \\
    t
    &
    \coloneqq
    \pr _{ 2 }
    \\
    u
    &
    \coloneqq
    \Delta _{ U / \cX }
    \\
    i
    &
    \coloneqq
    \text{``swap''}
    \\
    c
    &
    \coloneqq
    \left[
        R \times _{ U } R
        =
        U \times _{ \cX } U \times _{ U } U \times _{ \cX } U
        \simto
        U \times _{ \cX } U \times_{ \cX } U
        \xrightarrow[]{\pr _{ 1 3 }}        
        U \times _{ \cX } U
    \right]
\end{align}

Let \(X \) be a scheme.
The category of \(\cO_X \)-bimodules is the category of quasi-coherent sheaves on \(X\times X \) with monoidal structure given by convolution
(see, e.g.,~\cite[p.~6282]{MR2958936} for the definition of convolutions).
An
\emph{\(\cO _{ X }\)-bimodule algebra}
is an algebra in this monoidal category. Note that an \(\cO _{ X }\)-bimodule algebra has possibly distinct structures as a left and right \(\cO _{ X }\)-modules.
An
\emph{\(\cO _{ X }\)-algebra} is an \(\cO _{ X }\)-bimodule algebra in which \(\cO _{ X }\) is central.

We have the diagram of \(\cO _{ U }\)-bimodule algebras as follows, which is dual to the diagram~\cref{figure:groupoid of schemes}.
    \begin{figure}[H]
        \centering
        \includegraphics[scale=1.2]{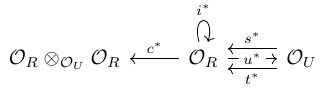}
        \caption{}
        \label{figure:hopfAlgebroid}
    \end{figure}

    If we have a category $\cC$ with pushouts, we define a cogroupoid object in $\cC$ to be a diagram in $\cC$ dual to~\cref{figure:groupoid of schemes}.  For further detail see for example~\cite{MR2553659}.
We define a \emph{Hopf algebroid} over a base scheme \( S \) to be a cogroupoid of quasi-coherent sheaves of
\(
   \cO _{ S }
\)-algebras which will be abbreviated as
\( \cO_R \leftleftarrows \cO_U\). We say this Hopf algebroid is \emph{commutative} if
\( \cO_R \) and \( \cO_U \) are commutative \(\cO _{ S }\)-algebras. The category of
\(\cO_R \leftleftarrows \cO_U\) comodules is symmetric monoidal.
In fact we have the following equivalence
    \begin{figure}[H]
        \centering
        \includegraphics[scale=1.2]{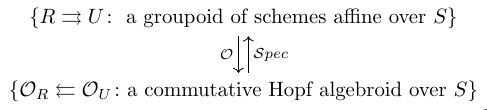}
    \end{figure}
with an equivalence of symmetric monoidal categories as follows.
\begin{align}\label{equation: [ U / R ] and Hopf algebroid}
    \qcoh [ U / R ]
    \simeq
    \Comodule
    \left(
        \cO_R \leftleftarrows \cO_U    
    \right)
\end{align}

Let \(\cA \leftleftarrows \cB\) be a Hopf algebroid over \( S \),
and note that the co-composition morphism defines a co-multiplication
\(
    \cA \otimes_\cB \cA \leftarrow \cA
\).
We say \( \cA \) is cocommutative if this map is symmetric and \( \cB \) is commutative.
Given a Hopf algebroid \(\cA \leftleftarrows \cB\) we define the dual \(\cA^\vee\)
to be the sheaf of left module homomorphisms
\(
   \cHom _{ \cB - \Module }
   \left(
       {} _{ \cB }
       \cA,
       \cB
   \right)
\), where
\( \cA \) is a left \( \cB \)-module by a choice of the two structure morphisms.
In particular, when \(\cA=\cO_R\), we have that \(\cO_R\) is a bimodule and has \(\cO_U\)-module structures induced by \(s^\ast\) and \(t^\ast\).
The construction of
\(
   \cO _{ R } ^{ \vee }
\)
is independent of this choice. We call the  algebra \(\cO_R^\vee\) the convolution algebra of the groupoid
\(R\rightrightarrows U\).
We have the following correspondences
    \begin{figure}[H]
        \centering
        \includegraphics[scale=1.2]{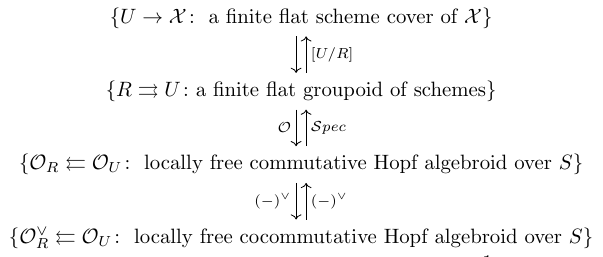}
        \caption{}
        \label{figure:correspondences}
    \end{figure}
We use the notation \(\cA \leftleftarrows \cB\) for   \(\cO_R^\vee  \leftleftarrows \cO_U\).
The dual of a Hopf algebroid is again a Hopf algebroid by the main result of~\cite{MR3606499}, and so there are structure maps \(\cO_R^\vee  \rightrightarrows \cO_U\) dual to \(\cO_R \leftleftarrows \cO_U\),
but our shorthand only focuses on \(\cO_R^\vee  \leftleftarrows \cO_U\).  In our situation, these are the same algebra map, induced by the identity map of the groupoid.  Note that the image of \(\cO_U\) is not in the centre of \(\cO_R^\vee\) in
general.
We have equivalences of monoidal categories as follows.
\begin{align}
    \Comodule \cO_R
    \simeq
    \Module \cO_R^\vee
\end{align}

Note that
\(
    \cO_R
    \simeq
    \cO _{ U \times _{ \cX } U }
\)
in~\cref{figure:hopfAlgebroid} has the structure of a commutative Hopf algebroid.
We now apply the dual functor
\(
    {{ ( - ) }}^\vee
    =
    \cHom_U(-,\cO_U)
\)
to obtain a cocommutative Hopf algebroid structure on
\(
    \cO _R^\vee
\).

The comultiplication gives the category \(\Module \cO_R^\vee\) the structure of a closed symmetric monoidal category.

\begin{theorem}[{\cite{MR2018958}}]\label{theorem:convolutionAlgebra}
    Let \( \cX \) be a stack with a finite flat scheme cover
    \(
       p
       \colon
       U
       \to
       \cX
    \)
    which admits a quasi-projective coarse moduli space \( S \). Then the algebra
    \begin{align}
        \cA
        \coloneqq
        \Hom_U
        \left(
            s_{ \ast } \cO_{U\times_\cX U},
            \cO_U
        \right),
    \end{align}
    which is associated to \(p\) via the correspondence~\cref{figure:correspondences},
    is a cocommutative Hopf algebroid over \(\cO_U\).
    There is an equivalence of symmetric monoidal categories \(\module\cA \simeq \coh \cX \).
    In addition, if \(U\) and \( \cX \) are normal and \( \cX \) is generically tame,
    in the sense that the stabilizer of the generic point is linearly reductive, then the Hopf algebroid \( \cA \) is an order over \( S \) (possibly with centre larger than \(\cO_S\)). 
\end{theorem}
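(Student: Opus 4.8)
The plan is to read off the first two assertions from the discussion immediately preceding the theorem together with \cite{MR2018958,MR3606499}, and then to establish the ``order'' statement directly. For the structural part, recall that a finite flat scheme cover $p\colon U\to\cX$ is an fppf presentation, so $\cX\simeq[U/R]$ with $R=U\times_\cX U$, and $s=\pr_1\colon R\to U$ is finite flat, being a base change of $p$. Hence $\cO_R=s_\ast\cO_{U\times_\cX U}$ is a locally free $\cO_U$-module and $\bigl(\cO_R\leftleftarrows\cO_U\bigr)$ is a locally free commutative Hopf algebroid; by \cite{MR3606499} its dual $\cA=\cO_R^\vee=\cHom_{\cO_U}(\cO_R,\cO_U)$ inherits the structure of a cocommutative Hopf algebroid over $\cO_U$, and its comultiplication makes $\Module\cA$ closed symmetric monoidal. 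Combining the equivalence $\qcoh[U/R]\simeq\Comodule\bigl(\cO_R\leftleftarrows\cO_U\bigr)$ of \eqref{equation: [ U / R ] and Hopf algebroid} with $\Comodule\cO_R\simeq\Module\cO_R^\vee$ from \eqref{equation:correspondences}, and passing to the full subcategories of coherent objects, yields the symmetric monoidal equivalence $\module\cA\simeq\coh\cX$.

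Now assume $U$ and $\cX$ are normal and $\cX$ is generically tame. First I would check that $\cA$ is a coherent sheaf of $\cO_S$-algebras with $\cO_S$ acting centrally. The coarse space map $\pi\colon\cX\to S$ is proper and quasi-finite, so $U\to\cX\to S$, being proper and quasi-finite (as $p$ is finite), is finite; thus $\cO_U$ is a coherent $\cO_S$-algebra, and since $\cA$ is locally free over $\cO_U$ it is coherent over $\cO_S$. That $\cO_S$ lands in the centre of $\cA$ follows from $\pi_\ast\cO_\cX=\cO_S$ together with the fact that $\cO_\cX$ is the monoidal unit of $\coh\cX$: under $\module\cA\simeq\coh\cX$ it corresponds to the unit of $\Module\cA$, so sections of $\cO_S$ act as endomorphisms of that unit, hence centrally.

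Next, $\cA$ is an $\cO_S$-lattice; we may assume $S$ (equivalently $\cX$) integral, arguing componentwise otherwise. Since $U$ is normal, hence reduced, and $U\to S$ is finite and surjective, a non-zerodivisor of $\cO_S$ pulls back to a non-zerodivisor of $\cO_U$, so $\cO_U$, and therefore the $\cO_U$-locally free sheaf $\cA$, is torsion-free over $\cO_S$. In fact, as $\cO_R^\vee$ is locally free (hence $S_2$) over $\cO_U$, $\pi$ is finite, and $S$ is normal (being the coarse space of the normal stack $\cX$), the pushforward $\cA=\pi_\ast\cO_R^\vee$ is $S_2$ and torsion-free over $\cO_S$, i.e. reflexive. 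In particular, writing $K=\bfk(S)$, the base change $\cA\otimes_{\cO_S}K$ is a finite-dimensional $K$-algebra and $\cA$ is a full $\cO_S$-lattice in it.

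It remains to prove that $\cA\otimes_{\cO_S}K$ is semisimple, and this is where generic tameness enters. Flat base change of $\module\cA\simeq\coh\cX$ along $\Spec K\to S$ identifies $\module(\cA\otimes_{\cO_S}K)$ with $\coh\cX_K$, where $\cX_K=\cX\times_S\Spec K$ is a normal stack over $K$ with finite inertia and coarse space $\Spec K$. By the local structure theory of tame stacks \cite{MR2427954}, after a finite separable extension $K\subseteq K'$ one has $\cX_{K'}\simeq[\Spec B/G]$ with $G$ a finite linearly reductive group scheme and $B^G=K'$; since $\cX_{K'}$ is normal and zero-dimensional, $B$ is a finite product of fields, so $\coh(\Spec B)$ is semisimple, and taking $G$-equivariant objects (with $G$ linearly reductive) preserves semisimplicity. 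Hence $\coh\cX_{K'}$, and by faithfully flat descent of $\Ext^1$ also $\coh\cX_K$, is a semisimple abelian category; a finite-dimensional algebra with semisimple module category is semisimple, so $\cA\otimes_{\cO_S}K$ is semisimple and $\cA$ is an $\cO_S$-order in it. The main obstacle is precisely this last step: controlling the generic fibre $\cX_K$ well enough — via the structure theory of tame stacks and crucially the normality hypothesis, which rules out non-reduced generic behaviour — to conclude that $\coh\cX_K$ is semisimple; the coherence, centrality and lattice statements are routine once the correspondences of \eqref{equation:correspondences} are available.
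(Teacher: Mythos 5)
Your proposal is correct and follows essentially the same route as the paper, whose proof is just a compressed version of it: it invokes the constructions of \cite{MR2018958} (via Proposition~4.4 there, using that \( \cO_{U\times_\cX U} \) is finite over its centralizer) for the equivalence and the Hopf algebroid/monoidal structure, and then records exactly the three facts you verify in detail — coherence of \( \cA \) over \( S \) from finiteness of \( U \to S \), torsion-freeness from normality of \( U \), and semisimplicity of \( \cA_\eta \) from generic tameness. The one place where your justification is thinner than it appears is the claim that normality of \( \cX_{K'} \) forces \( B \) to be a product of fields: the chart \( \Spec B \to \cX_{K'} \) is a \( G \)-torsor, hence \'etale in the characteristic-zero setting of this section (where finite linearly reductive group schemes are \'etale), so the step is fine here; but in positive characteristic it is only finite flat, and reducedness does not ascend along such covers (e.g. \( \Spec K[x]/(x^p-1) \to [\mu_p/\mu_p] = \Spec K \)), so there one should instead argue that a reduced zero-dimensional stack with finite linearly reductive stabilizers is a finite disjoint union of gerbes over fields, whose categories of coherent sheaves are semisimple.
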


\begin{proof}
    The
    \(
       \cO _{ U }
    \)-bimodule
    \(
        \cO _{ U \times _{ \cX } U }
    \)
    is finite over its centralizer.
    Therefore we can apply~\cite[Proposition~4.4]{MR2018958} to see that there is an equivalence of categories \(\module\cA \simeq \coh \cX \).
    The Hopf algebroid and symmetric monoidal category structures are not treated in~\cite[Theorem~3.3]{MR2018958}, but follow easily from the constructions in that paper.
    For the last statement, \( \cA \) as a sheaf on \( S \) is coherent since both \( S \) and the canonical map \(U \to S\) are finite. It is also torsion free as \(U\) is normal and hence
    \(\cO _{ U }\) is torsion free.
    Lastly, we have that \( \cA \) is a semisimple algebra at the generic point \(\eta\)
    since \(\module \cA_\eta \simeq \coh \cX_\eta \) which is semi-simple since \( \cX \) is generically tame. 
\end{proof}

The following result of Kresch and Vistoli gives conditions that ensure the existence of a finite flat scheme cover.  Quotient stacks are also characterized in
~\cite{MR2108211}.

\begin{corollary}
    Let \( \cX \) be a Deligne--Mumford stack that is separated and of finite type over \( \bfk \) with quasi-projective coarse moduli space
    \(
       S
    \).
    Suppose further that \( \cX \) is a quotient stack (in particular this holds if \( \cX \) is smooth and generically tame).  Then there exists a finite flat scheme cover \(p \colon U \to \cX \).
\end{corollary}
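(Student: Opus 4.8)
The plan is to put $\cX$ into the standard form of a quotient by $\GL_n$ and then to invoke the covering theorem of Kresch and Vistoli. First I would normalise the presentation. Since $\cX$ is assumed to be a quotient stack, $\cX \simeq [Z_0/G_0]$ for some affine algebraic group $G_0$ acting on a scheme $Z_0$, and the Deligne--Mumford hypothesis forces the stabilizers of this action to be finite (and, in characteristic $0$, reduced). Choosing a closed immersion $G_0 \hookrightarrow \GL_n$ and replacing $Z_0$ by the induced scheme $Z_0 \times^{G_0} \GL_n$ reduces us to $G_0 = \GL_n$, and, using that the coarse moduli space $S$ is quasi-projective, one may further take $Z$ to be a quasi-projective scheme carrying a $\GL_n$-linearized action, by the structure theory of quotient stacks with quasi-projective moduli (see~\cite{MR2108211}). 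For the parenthetical claim, when $\cX$ is smooth and generically tame it is automatically a global quotient by~\cite{MR2483938} (in characteristic $0$, tameness is automatic), so the hypothesis of the corollary is indeed met in that case.

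Given such a presentation $\cX = [Z/\GL_n]$, the assertion is exactly the covering theorem of Kresch and Vistoli, which I would invoke directly; I would also indicate the underlying geometry. One fixes a $\GL_n$-equivariant locally closed immersion $Z \hookrightarrow \bP(V)$ into the projectivization of a $\GL_n$-representation $V$; since $\GL_n$ is a special group, the fibre of the torsor $Z \to \cX$ over any point of $\cX$ is a trivial $\GL_n$-torsor, so it appears in $\bP(V)$ over the residue field as a locally closed copy of $\GL_n$, of dimension $n^2 = \dim \GL_n$. One then cuts $Z$ down --- \emph{not} $\GL_n$-equivariantly --- by a carefully chosen closed subscheme of codimension $n^2$ so that the composite $U \hookrightarrow Z \to \cX$ becomes quasi-finite, proper, and surjective (hence finite and surjective) and, crucially, flat; since $U$ is a closed subscheme of the scheme $Z$ it is automatically a scheme. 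The real difficulty is to make a \emph{single} choice of the cutting locus that simultaneously renders all fibres of $U \to \cX$ finite and the morphism flat, uniformly over $\cX$; this is where quasi-projectivity of $Z$ and of $S$ is used essentially, and is the technical heart of the Kresch--Vistoli argument.

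The main obstacle, accordingly, is precisely this flatness step: upgrading the easily produced finite surjective cover to a finite \emph{flat} one, uniformly over the whole stack, is genuinely the content of the cited theorem and not a formality. The remaining points --- that being a quotient stack and having finite stabilizers are preserved when one replaces the structure group by $\GL_n$ and enlarges the scheme, and that a quasi-projective coarse space permits a quasi-projective presentation --- are routine.
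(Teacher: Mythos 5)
Your proposal is correct and takes essentially the same route as the paper, which proves the corollary simply by invoking Kresch--Vistoli~\cite[Theorems~1, 2, and 3]{MR2026412} together with Gabber's theorem~\cite{deJong} (and notes via~\cite[Theorem~3.3]{MR2018958} that \(U \to S\) is finite). Two caveats: the unconditional existence of a finite \emph{flat} scheme cover in~\cite{MR2026412} rests on Gabber's theorem that \(\Br=\Br'\) for quasi-projective schemes, so that reference should be cited alongside Kresch--Vistoli; and your sketch of the ``underlying geometry'' does not reflect how flatness is actually obtained there --- cutting \(Z\) down non-equivariantly yields finite \emph{surjective} covers, while the flatness in Kresch--Vistoli comes from the gerbe/Azumaya-algebra (Brauer map) argument, which is exactly where Gabber's theorem enters.
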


\begin{proof}
The finite flat scheme cover \(p \colon U \to \cX \) exists under the assumptions of the statement (including those in the parentheses) by Kresch and Vistoli~\cite[Theorems~1, 2, and 3]{MR2026412} and Gabber's Theorem~\cite{deJong}.
By~\cite[Theorem~3.3]{MR2018958} the map \(U \to S\) is finite.
\end{proof}

Given the correspondence between stacks \( \cX \) and orders \( \cA \) in~\cref{theorem:main,theorem:convolutionAlgebra}, it is natural to ask how properties of \( \cX \) and \( \cA \) relate. \cref{lemma:structure theorem for DM stacks in dimension 0}--\cref{corollary:structure theorem in dimension 0} describe the correspondence between stacks and algebras in dimension 0, and will be applied to the generic point of our scheme \(X \). In~\cref{corollary:order} we describe how properties of the stack translate to those of the order, the opposite direction of our main theorem.
Lastly, we look at the closure operation given by applying these operations
forward and backward in~\cref{corollary:combination2,corollary:combination2}. 

Given a field \(K\) and a group scheme \( A \) over \(K\), the cocycles in the non-abelian Galois cohomology \(H^2(K,A)\) are described in~\cite{MR0209297} and are given by a Galois extension \(L\) of \(K\) with Galois group \( G \) and maps \(f \colon G\times A \to A\), \(g \colon G\times G \to A\) satisfying the following conditions for all
\(
   a
   \in
   A
\)
and
\(
   r,
   s,
   t
   \in
   G
\).

\begin{itemize}
    \item 
    \(
       a
       \mapsto
       f _{ s } ( a )
    \)
    is an action of \( G \) on \( A \).

    \item
    The following cocycle conditions are satisfied.
    \begin{align}\label{equation:cocycleConditions}
        f_s(f_t(a))
        &
        =
        g_{
            s,
            t
        }
        f_{
            s t
            } ( a )
        g_{
            s,
            t
            }^{-1}
        \\
        f_r(g_{s, t})g_{r,st}
        &
        =
        g_{r,s}g_{rs,t}
    \end{align}
\end{itemize}

\begin{lemma}\label{lemma:structure theorem for DM stacks in dimension 0}
    Let \( \cX \) be a smooth and separated Deligne--Mumford stack over a characteristic \(0\) field \( K \) such that the structure map \(\cX\to\Spec K\) is a coarse moduli space.  Then \( \cX \) is a banded
    \( A \)-gerbe for some finite \'etale group scheme \( A \) over \(K\)
    and is determined by a Galois extension \(L\) of \(K\)
    with Galois group \( G \) and a cocycle \(\sigma=(f,g)\) 
    in the non-abelian Galois cohomology group \(\mathrm{H}^2(K,A)\) as in~\eqref{equation:cocycleConditions}.
    There is a groupoid presenting the stack \( \cX \)
    \begin{figure}[H]
        \centering
        \includegraphics[scale=1.2]{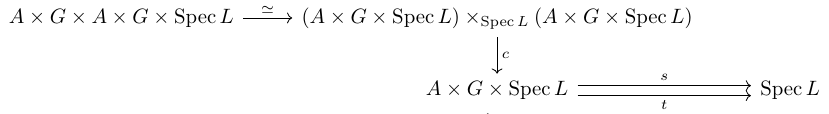}
    \end{figure}
    with convolution, source, and target maps as follows.
    \begin{align}
        (k,s,\ell,t,x)
        &
        \stackrel{c}{\mapsto} (kf_s(\ell) g_{s, t},st,x)
        \label{equation:multiplicationGroupoid}
        \\
        (k,g,x)
        &
        \stackrel{s}{\mapsto}  x
        \\
        (k,g,x)
        &
        \stackrel{t}{\mapsto} gx
    \end{align}
\end{lemma}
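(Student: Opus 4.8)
The plan is to realize $\cX$ as the quotient of an explicit groupoid built from a Galois étale atlas, and then to recognize the structure constants of that groupoid as the non-abelian $2$-cocycle $(f,g)$ of~\cite{MR0209297}.

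First I produce the atlas. Since $\cX$ is smooth, separated and Deligne--Mumford over the characteristic-$0$ field $K$ with coarse space $\Spec K$, it is $0$-dimensional, so any étale atlas is $\Spec$ of a finite product of finite separable field extensions of $K$; choosing a connected one (automatically surjective, as $\cX$ has a single point) and passing to its Galois closure gives a surjective étale $\pi\colon\Spec L\to\cX$ with $L/K$ finite Galois and $G:=\Gal(L/K)$, so that $\cX\simeq[\Spec L/R]$ for the groupoid $R:=\Spec L\times_\cX\Spec L\rightrightarrows\Spec L$. Next I record that $\cX$ is a gerbe: base-changing to $\overline K$ (an étale cover after a finite extension, along which being a gerbe descends), $\cX_{\overline K}$ has a unique geometric point $x$ with finite automorphism group $\overline A:=\underline\Aut(x)$, and the residual-gerbe map $B\overline A\to\cX_{\overline K}$ is a monomorphism which is flat by miracle flatness (both sides smooth of dimension $0$), hence an étale monomorphism, hence an isomorphism $\cX_{\overline K}\simeq B\overline A$. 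I then take $A$ to be the constant finite étale $K$-group scheme on $\overline A$.

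With this in hand I dissect $R$. Writing $\Spec L\times_K\Spec L=\coprod_{g\in G}\Spec L$ and $A_L:=\pi^*I\cX=\underline\Aut_\cX(\xi)$ (a finite étale $L$-group scheme, a form of $A$), the gerbe property gives $R=\coprod_{g\in G}R_g$ with $R_e=A_L$ and each $R_g$ an $A_L$-torsor over $\Spec L$; after enlarging $L$ (and re-Galois-closing) I may assume every $R_g$ is trivial and $A_L=A\times_K\Spec L$. Choosing trivializations $\phi_g\in R_g(L)$ with $\phi_e=\mathrm{id}$ yields an isomorphism $R\simeq A\times G\times\Spec L$ over $\Spec L$ carrying $s,t$ to $(k,g,x)\mapsto x$ and $(k,g,x)\mapsto gx$. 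Transporting the composition law of the groupoid through these trivializations produces maps $f\colon G\times A\to A$ (the conjugation $\alpha\mapsto\phi_s\alpha\phi_s^{-1}$, read in the atlas) and $g\colon G\times G\to A$ (the associativity defect, comparing $\phi_s\cdot{}^s\!\phi_t$ with $\phi_{st}$), so that composition becomes exactly $(k,s)\cdot(\ell,t)=(k\,f_s(\ell)\,g_{s,t},\,st)$ as in~\eqref{equation:multiplicationGroupoid}. Unwinding the unit, inverse and associativity axioms of the groupoid then translates verbatim into the statement that $f$ is a $G$-action on $A$ and that $(f,g)$ satisfies the cocycle identities~\eqref{equation:cocycleConditions}. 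Conversely such a datum $(L,G,A,f,g)$ builds the stated groupoid, whose quotient stack is a gerbe banded by $A$ with all the desired properties, and changing $\{\phi_g\}$ alters $(f,g)$ by the relevant coboundary; hence $\cX$ determines, and is determined by, a well-defined class $\sigma\in H^2(K,A)$ in the sense of~\cite{MR0209297}.

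The main obstacle I anticipate is the bookkeeping around the group $A$: to make the data fit the Eilenberg--MacLane/Dedecker formalism I must fix $A$ as a group scheme over $K$ --- which I do by taking the split (constant) form of $\underline\Aut_\cX$ --- and then absorb the failure of its Galois descent datum to be $G$-equivariant into the pair $(f,g)$. Getting the conjugation term $f$ and the associativity term $g$ normalized consistently, and checking that the groupoid axioms are equivalent to the stated cocycle conditions, is the one genuinely laborious point; the remainder is routine unwinding of definitions.
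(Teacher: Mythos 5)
Your argument is correct in substance, but it is a genuinely different route from the paper's. The paper's proof is essentially by citation: it factors \( \cX \to \Spec K \) through the rigidification \( \alpha \colon \cX \to \cXbar \) and uses~\cite[Proposition~2.1]{MR2357471} to see \( \cXbar \simeq \Spec K \), invokes~\cite[Remark A.2 and Example A.3]{MR2427954} to conclude that \( \alpha \) is a gerbe banded by a finite \'etale group scheme \( A \) over \( K \), and then refers to Giraud and Springer for the classification by nonabelian \( H^2(K,A) \) and to the proof of~\cite[Theorem~3.1]{https://doi.org/10.48550/arxiv.math/0212266} for the groupoid presentation attached to a cocycle. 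You instead re-prove those inputs by hand: you build a Galois \'etale atlas \( \Spec L \to \cX \), identify \( \cX_{\overline{K}} \) with \( B\overline{A} \) via the residual gerbe, decompose \( R = \Spec L \times_{\cX} \Spec L \) into \( \Aut \)-torsors indexed by \( G = \Gal(L/K) \), trivialize after enlarging \( L \) (harmless, since the lemma does not fix \( L \)), and read off \( (f,g) \) and the multiplication rule~\eqref{equation:multiplicationGroupoid} from the groupoid axioms. What your approach buys is a self-contained construction that makes the cocycle and the presentation completely explicit; what the paper's buys is brevity and, via the rigidification step, the band as an honest (possibly non-constant) finite \'etale \( K \)-group scheme. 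The one point to tighten is your insistence on the constant form of \( A \): if the outer action \( s \mapsto [f_s] \in \mathrm{Out}(\overline{A}) \) is nontrivial, the band of \( \cX \) is the corresponding non-constant form, so the clause ``\( \cX \) is a banded \( A \)-gerbe'' is not literally verified for the constant group scheme; either keep the group scheme furnished by rigidification/descent (as the paper does) or state explicitly that in Springer's formalism the twisting is absorbed into \( (f,g) \). This does not affect the groupoid presentation or the cocycle class, which are what \cref{corollary:structure theorem in dimension 0} and the later results actually use.
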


\begin{proof} 
Let \(\alpha \colon \cX \to \cXbar\) be the rigidification of \( \cX \). By~\cite[Proposition~2.1]{MR2357471}, the structure map \(\cX \to \Spec K\) factors as \(\cX\xrightarrow{\alpha}\cXbar\xrightarrow{\beta}\Spec K\), where \(\beta\) is a coarse space map which is an isomorphism over a dense open subspace of \(\Spec K\). As a result, \(\beta\) is an isomorphism. By~\cite[Remark A.2 and Example A.3]{MR2427954}, the map \(\alpha\) is an \( A \)-gerbe for some finite \'etale group scheme \( A \) over \( K \).

The classification of banded gerbes by cohomology is in Giraud~\cite{MR0344253} with a concrete realization over fields as nonabelian Galois cohomology in Springer~\cite{MR0209297}.  A groupoid presentation of a gerbe constructed by a cocycle is the same as in the proof of~\cite[Theorem~3.1]{https://doi.org/10.48550/arxiv.math/0212266}, suitably interpreted as Galois cohomology.
\end{proof}

\begin{definition}\label{groupoid-over-field}
  Let \(K\) be a field, let \(L\) be a Galois extension of \(K\) with Galois group \( G \).  Let \( A \) be a finite group scheme over \(K\).  Further assume that \(|A(L)|
  =\deg A\).  Let \(LA\)  be the group algebra
  \(LA(L) = (\cO_{A}\otimes L)^\vee\)
  of the constant group
  \(
     A ( L )
  \)
  with the coefficient field
  \(
     L
  \).
  Let \(\sigma\) be a cocycle in \(H^2(K,A)\) as above determined by \((f,g)\) with
  \(f\) given by the action of \( G \) on \(LA\).
  We define the \emph{crossed product algebra}
  \(LA \rtimes_\sigma G\) as the \(K\)-vector space
  \(
    LA \otimes_K KG
  \)
  with multiplication given by
    \begin{equation}\label{equation:multiplicationAlgebra}
        (a \otimes s) (b \otimes t)
        =
        a f_s(b) g_{s, t} \otimes st
    \end{equation}
 for \(a,b \in LA\) and \(s,t \in G\), as in~\cite[Definition~7.1.1]{Montgomery}.
\end{definition}

\begin{corollary}\label{corollary:structure theorem in dimension 0}
  With hypotheses as in~\cref{lemma:structure theorem for DM stacks in dimension 0}, there is an equivalence of categories
  \[\module (LA \rtimes_\sigma KG) \simeq \coh \cX.\]
\end{corollary}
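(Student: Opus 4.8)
The plan is to feed the explicit groupoid presentation of $\cX$ from \cref{lemma:structure theorem for DM stacks in dimension 0} into the dictionary between quotient stacks and (dual) Hopf algebroids, and then recognize the resulting convolution algebra as the crossed product $LA\rtimes_\sigma KG$ of \cref{groupoid-over-field}. Write $U=\Spec L$ and $R=A\times G\times\Spec L$ for the scheme groupoid of \cref{lemma:structure theorem for DM stacks in dimension 0}, so $\cX\simeq[U/R]$. Since $A$ and $G$ are finite over $K$, the source map $s\colon R\to U$ is finite and flat and $\cO_R$ is a finite free $\cO_U$-module, so this is a finite flat groupoid of the type appearing in~\eqref{equation:correspondences}. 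Composing the equivalence~\eqref{equation: [ U / R ] and Hopf algebroid} with the equivalence of monoidal categories $\Comodule\cO_R\simeq\Module\cO_R^\vee$ and restricting to coherent (equivalently, finitely generated) objects gives
\begin{align}
    \coh\cX\simeq\module\cO_R^\vee,
\end{align}
where $\cO_R^\vee=\cHom_U(\cO_R,\cO_U)$ is the convolution algebra of $R\rightrightarrows U$; alternatively one may invoke \cref{theorem:convolutionAlgebra} directly. It then remains to construct an isomorphism of $K$-algebras $\cO_R^\vee\simeq LA\rtimes_\sigma KG$.

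On the level of the underlying $K$-vector space, $\cO_R=\cO_A\otimes_K\cO_G\otimes_K L$ is free over $\cO_U=L$ via $s^\ast$, so
\begin{align}
    \cO_R^\vee=\Hom_L\!\left(\cO_A\otimes_K\cO_G\otimes_K L,\,L\right)\simeq\left(\cO_A\otimes_K L\right)^\vee\otimes_K\cO_G^\vee\simeq LA\otimes_K KG,
\end{align}
using that dualization commutes with the base change $K\to L$ on the finite projective $K$-module $\cO_A$, that $(\cO_A\otimes_K L)^\vee=LA$ by the very definition in \cref{groupoid-over-field} (note $|A(L)|=\deg A$ makes $A_L$ constant, so $\cO_A\otimes_K L\simeq L^{A(L)}$), and that $\cO_G^\vee=(K^G)^\vee=KG$. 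This matches the underlying vector space of $LA\rtimes_\sigma KG$.

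The algebra structure on $\cO_R^\vee$ is by construction the $L$-linear dual of the co-composition $\cO_R\to\cO_R\otimes_{\cO_U}\cO_R$, i.e.~of the comultiplication dual to the composition $c$ of~\eqref{equation:multiplicationGroupoid}. Unwinding the identifications of the previous paragraph — writing an element of $\cO_R^\vee$ in the basis dual to the product of the idempotent basis of $\cO_A\otimes_K L\simeq L^{A(L)}$ and the idempotent basis of $\cO_G=K^G$, and recording it as $a\otimes s$ with $a\in LA$ and $s\in G$ — the formula $c(k,s,\ell,t,x)=(k\,f_s(\ell)\,g_{s,t},\,st,\,x)$ dualizes to the multiplication
\begin{align}
    (a\otimes s)(b\otimes t)=a\,f_s(b)\,g_{s,t}\otimes st,
\end{align}
which is exactly~\eqref{equation:multiplicationAlgebra}. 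Hence $\cO_R^\vee\simeq LA\rtimes_\sigma KG$ as $K$-algebras, and combining with the first paragraph yields the desired equivalence $\module(LA\rtimes_\sigma KG)\simeq\coh\cX$.

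The step I expect to be the main obstacle is the final bookkeeping: one must track the two $\cO_U$-module structures on $\cO_R$ induced by $s^\ast$ and $t^\ast$ (using the paper's remark that $\cO_R^\vee$ is independent of this choice), keep the left/right module conventions straight, and verify that under dualization the Galois action $f$ and the $2$-cocycle $g$ are transported so that the twisted product appears in precisely the form~\eqref{equation:multiplicationAlgebra}, rather than an opposite or $f$-conjugated variant; this is also where the cocycle conditions~\eqref{equation:cocycleConditions}, which express associativity on the groupoid side, get matched with the associativity of the crossed product.
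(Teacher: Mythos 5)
Your proposal is correct and follows essentially the same route as the paper: the paper's proof likewise observes that the groupoid composition law~\eqref{equation:multiplicationGroupoid} dualizes to the crossed-product multiplication~\eqref{equation:multiplicationAlgebra} on the convolution algebra $\cO_R^\vee$, and then invokes \cref{theorem:convolutionAlgebra} to get $\module \cO_R^\vee \simeq \coh\cX$. You simply spell out the vector-space identification $\cO_R^\vee \simeq LA\otimes_K KG$ and the dualization of the co-composition in more detail than the paper does.
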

\begin{proof}
  The description of the multiplication of the groupoid
  in~\eqref{equation:multiplicationGroupoid} induces the multiplication of the convolution
  algebra in~\eqref{equation:multiplicationAlgebra} and so the result follows from
\cref{theorem:convolutionAlgebra}.
\end{proof}

\begin{corollary}\label{corollary:order}
    Let \(R\rightrightarrows U\) be a finite flat groupoid
    of schemes presenting a Deligne--Mumford stack \(\cX = [U/R]\) over a characteristic \(0\) field, with \(U\) normal and coarse moduli space \(X \).
    We further assume that $\cX$ is smooth over the generic point $k(X)$.
   Then \(\cO_R^\vee\) is an order in a semisimple \(\bfk(X)\)-algebra which is Morita equivalent to
    \(\Module (LA \rtimes_\sigma KG)\) as in~\cref{corollary:structure theorem in dimension 0} with \[ \module (LA \rtimes_\sigma KG) \simeq \coh( \cX \times_X \bfk(X)).\]
    Moreover, \(\cO_R^\vee\) is a tame order and \(Z(\cO_R^\vee)\) is the integral closure of \(\cO_X \) in \(\zentrum( LA \rtimes_\sigma KG )\).
    If $\cX$ is smooth, we can also identify the centre in the following way.
 Let \( I \) be the union of the maximal dimensional components of the coarse moduli space of the inertia stack of \( \cX \).
    Then
    \(\cO_I^\vee \coloneqq \zentrum (\cO_R^\vee)\) has \(\cO_I^\vee\simeq \cO_I\) as a coherent sheaf over \(X\).
    
\end{corollary}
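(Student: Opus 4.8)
The plan is to run the groupoid \((R\rightrightarrows U)\) through \cref{theorem:convolutionAlgebra} and then read off the order-theoretic statements by localizing on \(X\). First I would record that \(\cX\) is normal: the atlas \(U\to\cX\) is faithfully flat, so normality descends from \(U\) (equivalently, if \(V\to\cX\) is an \'etale atlas then \(V\times_\cX U\to V\) is finite faithfully flat with \(V\times_\cX U\) normal, being \'etale over \(U\), whence \(V\), hence \(\cX\), is normal). Since \(\bfk\) has characteristic \(0\), \(\cX\) is automatically tame, in particular generically tame, and the base change \(\Spec K\to X\) with \(K\coloneqq\bfk(X)\) commutes with the formation of coarse spaces, so \(\cX\times_X K\) is a smooth separated Deligne--Mumford stack over \(K\) with coarse space \(\Spec K\). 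Thus \cref{theorem:convolutionAlgebra} applies: \(\cO_R^\vee\), regarded as an \(\cO_X\)-algebra, is coherent and torsion-free with \(\module\cO_R^\vee\simeq\coh\cX\), and it is semisimple at the generic point. Base-changing this equivalence along \(\Spec K\to X\) gives \(\module(\cO_R^\vee\otimes_{\cO_X}K)\simeq\coh(\cX\times_X K)\), which by \cref{corollary:structure theorem in dimension 0} is \(\module(LA\rtimes_\sigma KG)\); hence \(\cO_R^\vee\otimes_{\cO_X}K\) is Morita equivalent to \(LA\rtimes_\sigma KG\). This gives the first two assertions.

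For tameness I would treat reflexivity and the codimension-one condition separately. As a sheaf \(\cO_R^\vee\) is the pushforward along the finite map \(\pi\colon U\to X\) of \(\cHom_{\cO_U}(\cO_{U\times_\cX U},\cO_U)\); being a sheaf of homomorphisms into the reflexive sheaf \(\cO_U\) (recall \(U\) is normal), it is reflexive over \(\cO_U\), hence reflexive over \(\cO_X\) because \(\pi\) is finite and \(X\) normal. It then follows that \(Z\coloneqq\zentrum(\cO_R^\vee)\) is reflexive over \(\cO_X\) as well, being the kernel of the \(\cO_X\)-linear commutator map \(\cO_R^\vee\to\cHom_{\cO_X}(\cO_R^\vee,\cO_R^\vee)\) between reflexive sheaves. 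For the hereditary-in-codimension-one condition, fix a height-one prime \(\frakp\subseteq\cO_X\); since \(X\) is a surface, \(\cX\times_X\Spec\cO_{X,\frakp}\) is a normal one-dimensional Deligne--Mumford stack, hence regular, so \(\coh(\cX\times_X\Spec\cO_{X,\frakp})\) has global dimension \(1\). Transporting this across the equivalence, \(\cO_R^\vee\otimes_{\cO_X}\cO_{X,\frakp}\) is an order of global dimension \(1\), i.e.\ a hereditary order. Thus \(\cO_R^\vee\) is tame.

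To identify the centre, observe that \(Z\) and the integral closure \(\widetilde Z\) of \(\cO_X\) in \(\zentrum(LA\rtimes_\sigma KG)=Z\otimes_{\cO_X}K\) are both reflexive \(\cO_X\)-submodules of that algebra with \(Z\subseteq\widetilde Z\), so it suffices to check \(Z_\frakp=\widetilde Z_\frakp\) at every height-one prime \(\frakp\). There \(\cO_R^\vee\otimes_{\cO_X}\cO_{X,\frakp}\) is a hereditary order over the discrete valuation ring \(\cO_{X,\frakp}\), and by the structure theory of hereditary orders over a discrete valuation ring its centre is the integral closure of \(\cO_{X,\frakp}\) in \(\zentrum(LA\rtimes_\sigma KG)\), i.e.\ \(\widetilde Z\otimes_{\cO_X}\cO_{X,\frakp}\); hence \(Z=\widetilde Z\). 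Finally, suppose \(\cX\) is smooth. Over an affine open \(V\subseteq X\) the sheaf \(\zentrum(\cO_R^\vee)\) is the \(\cO_X(V)\)-algebra \(\zentrum(\cO_R^\vee(V))=\zentrum(\coh\cX_V)=\Hom_{\cX_V\times\cX_V}(\Delta_\ast\cO_{\cX_V},\Delta_\ast\cO_{\cX_V})\). By \cref{proposition:HKR in degree 0} together with \cref{remark:HKR} (whose hypothesis of \'etale stabilizers holds in characteristic \(0\)) this equals \(\bigoplus_W H^0(W,\cO_W)\), the sum over the connected components \(W\) of \(I\cX_V\) with \(\dim W=\dim\cX\). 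Passing to coarse spaces (for a tame stack the pushforward of \(\cO\) to the coarse space is \(\cO\) of the coarse space) and using that, by smoothness of \(\cX\), the coarse space of \(I\cX\) is \'etale-locally a disjoint union of fixed loci whose top-dimensional connected components constitute exactly \(I\), we obtain \(\zentrum(\cO_R^\vee(V))\cong\Gamma(V,\cO_I)\); these isomorphisms are compatible with restriction and glue to \(\zentrum(\cO_R^\vee)\cong\cO_I\) as coherent \(\cO_X\)-modules.

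I expect the main obstacle to be the codimension-one analysis that underlies both the hereditary property and the description of \(Z\): one must be sure that \(\cX\times_X\Spec\cO_{X,\frakp}\) is genuinely a regular one-dimensional stack---so that normality in dimension one forces regularity and global dimension one of its category of coherent sheaves---and that the structure theory of hereditary orders over a discrete valuation ring still pins down the centre as the correct normalization even though the generic-fibre algebra \(LA\rtimes_\sigma KG\) is only semisimple, not simple. In the smooth case the analogous subtlety is verifying that the connected components of the coarse space of \(I\cX\) are equidimensional, so that ``maximal-dimensional components'' really cuts out the subscheme \(I\) appearing in the statement and the lower-dimensional fixed loci contribute nothing; the latter is in any case forced by torsion-freeness of \(\zentrum(\cO_R^\vee)\) over \(\cO_X\).
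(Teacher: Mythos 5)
Your proposal is correct and follows essentially the same route as the paper's (terse) proof: identify the generic fibre via \cref{corollary:structure theorem in dimension 0}, localize the equivalence \(\module\cO_R^\vee\simeq\coh\cX\) to codimension one to get the hereditary property, use normality of \(U\) for reflexivity, and deduce the smooth-case description of the centre from \cref{proposition:HKR in degree 0} and \cref{remark:HKR}. Your treatment of the centre (reflexivity of \(Z\) plus the codimension-one structure theory of hereditary orders over a DVR) is just a fleshed-out version of the paper's one-line appeal to the fact that a tame order has normal centre, so the two arguments coincide in substance.
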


\begin{proof}
  The behaviour at the generic point follows from~\cref{corollary:structure theorem in dimension 0}.
  We can localize \(\coh \cX \simeq \module  \cO_R^\vee\) to codimension one
  to show that \(\cO_R^\vee\) is hereditary. Since \(U\) is normal,  \(\cO_R^\vee\) is reflexive.  Since  \(\cO_R^\vee\) is tame, its centre is normal, giving the first description of the centre. The last statement about the centre follows from~\cref{proposition:HKR in degree 0} and~\cref{remark:HKR}.
\end{proof}

If we start with an order, associate a stack using~\cref{theorem:main}, and then take its convolution algebra via~\cref{theorem:convolutionAlgebra}, we obtain an order whose module category is monoidal. The result is~\cref{corollary:combination1} below. Recall from~\cref{definition:m-twisted} that given a \( \mu_N \)-gerbe
\( \cX \) the category of \( i \)-twisted sheaves is \(\qcoh \cX^{(i)}\).
\begin{corollary}\label{corollary:combination1}
    Let \(\cA_1\) be a tame split order of global dimension 2 over a surface \( X \) as in~\cref{definition:nc-surface} with generic point \(\eta\).
    Then there exists a \( \mu_N \)-gerbe
    \(
       \cX
    \)
    over a smooth tame algebraic stack of dimension \( 2 \) whose coarse space is \( X \) for some \(N\) and algebras \(\cA_2,\ldots,\cA_N\) over \( X \) such that
    \begin{enumerate}
    \item \(\module \prod \cA_i \simeq \qcoh \cX \).
    \item \(\module \cA_i \simeq \qcoh \cX^{(i)}\).
    \item \(N\) is the order of \(\cA_{1,\eta}\) in \(\Br \bfk(\eta)\).
    \end{enumerate}
\end{corollary}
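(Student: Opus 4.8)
The plan is to apply \cref{theorem:main} to produce the gerbe $\cX$ and then to obtain the algebras $\cA_i$ as the blocks of the convolution order of $\cX$, following the discussion preceding the statement. Applying \cref{theorem:main} to $\cA_1$ gives a $\mu_N$-gerbe $g\colon\cX\to\cX_c$ over a smooth tame algebraic stack $\cX_c$ of dimension $2$ with trivial generic stabilizer and coarse space $X$, together with a Morita equivalence $\module\cA_1\simeq\coh^{(1)}\cX$. I take $N$ minimal, i.e.\ the order of the cohomological Brauer class $[\cA_c]$ for which the construction of \cref{section:gerbe} produces a $\mu_N$-gerbe; by the Kummer sequence this is the order of $[\cA_c]$ in $\Br(\cX_c)$. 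Since $\cX_c$ is smooth and integral with function field $\bfk(\eta)$, the restriction map $\Br(\cX_c)\hookrightarrow\Br\bfk(\eta)$ is injective and sends $[\cA_c]$ to $[\cA_{1,\eta}]$ (these are Morita equivalent central simple algebras at $\eta$); hence $N$ is the order of $\cA_{1,\eta}$ in $\Br\bfk(\eta)$, which is item (4). By \cref{corollary:orthogonal decomposition} there is an orthogonal decomposition $\qcoh\cX=\bigoplus_{\overline{j}\in\bZ/N\bZ}\qcoh^{(\overline{j})}\cX$, and similarly for $\coh$.

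In characteristic $0$ the gerbe $\cX$ is a smooth, separated Deligne--Mumford stack which is generically tame and whose coarse space $X$ is quasi-projective; hence it is a quotient stack and, by the Kresch--Vistoli criterion together with Gabber's theorem (the corollary recalled after \cref{theorem:convolutionAlgebra}), it admits a finite flat scheme cover $p\colon U\to\cX$, which we may take with $U$ normal (replacing $U$ by its normalization, which stays finite flat over the regular stack $\cX$ by miracle flatness). Let $\cC$ denote the resulting convolution order over $X$; by \cref{theorem:convolutionAlgebra} it is a tame order over $X$ with $\module\cC\simeq\coh\cX$, and the same construction gives $\Module\cC\simeq\qcoh\cX$. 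Under this equivalence the decomposition $\qcoh\cX=\bigoplus_{\overline{j}}\qcoh^{(\overline{j})}\cX$ becomes an orthogonal decomposition of $\Module\cC$; an orthogonal decomposition of the module category of a sheaf of algebras into $N$ blocks corresponds to a complete set of orthogonal central idempotents $e_{\overline{j}}\in\zentrum(\cC)$ — here the idempotents dual to the characters of the banding group $\mu_N$, which are central precisely because $\cX$ is banded — and hence to a product decomposition $\cC\cong\prod_{\overline{j}\in\bZ/N\bZ}\cC_{\overline{j}}$ of sheaves of $\cO_X$-algebras with $\Module\cC_{\overline{j}}\simeq\qcoh^{(\overline{j})}\cX$. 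I then set $\cA_i:=\cC_{\overline{i}}$ for $2\le i\le N$ (so $\cA_N=\cC_{\overline{0}}$): item (2) holds for these $i$ by construction, and for $i=1$ it is \cref{theorem:main} (note $\cC_{\overline{1}}$ is Morita equivalent to $\cA_1$), while item (1) follows by taking the product over $\overline{j}\in\bZ/N\bZ$, giving $\module\bigl(\prod_{i=1}^N\cA_i\bigr)\simeq\bigoplus_{\overline{j}}\qcoh^{(\overline{j})}\cX=\qcoh\cX$.

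For item (3) I restrict along the generic point $\Spec\bfk(\eta)\to X$. The formations of $U\times_\cX U$, of the convolution order $\cC=\cO_{U\times_\cX U}^{\vee}$, and of the $\mu_N$-isotypic decomposition all commute with this flat base change, so $\Module\cA_{i,\eta}\simeq\qcoh^{(i)}(\cX\times_X\Spec\bfk(\eta))$. Since $\cX_c$ has trivial generic stabilizer, $\cX_c\times_X\Spec\bfk(\eta)=\Spec\bfk(\eta)$, so $\cX\times_X\Spec\bfk(\eta)$ is a $\mu_N$-gerbe over $\Spec\bfk(\eta)$ whose class in $H^2(\bfk(\eta),\mu_N)$ maps to $[\cA_{1,\eta}]\in\Br\bfk(\eta)$, because its category of $1$-twisted sheaves is $\module\cA_{1,\eta}$. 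For a $\mu_N$-gerbe over a field the category of $i$-twisted sheaves is the module category of a central simple algebra of Brauer class $i\cdot[\cA_{1,\eta}]=[\cA_{1,\eta}^{\otimes i}]$; hence $\cA_{i,\eta}$ is Morita equivalent to $\cA_{1,\eta}^{\otimes i}$, which is item (3), and this is consistent with (4) since $\cA_{N,\eta}$ then has trivial Brauer class.

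Given that \cref{theorem:main} and \cref{theorem:convolutionAlgebra} are already in hand, the argument is essentially formal. The step requiring the most care is the passage from the orthogonal decomposition of $\qcoh\cX$ to an honest product decomposition $\cC\cong\prod_{\overline{j}}\cC_{\overline{j}}$ of sheaves of $\cO_X$-algebras (rather than merely a decomposition of module categories): this means identifying the block idempotents with the characters of the banding group $\mu_N$ and checking that they give central sections of $\cC$ over $X$, and then verifying that this product decomposition is stable under the generic localization used in item (3). Both points are routine but deserve to be spelled out.
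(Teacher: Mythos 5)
Your proposal is correct and follows the same route the paper intends (and only sketches): apply \cref{theorem:main} to get the \( \mu_N \)-gerbe, take the convolution order of a finite flat scheme cover furnished by the Kresch--Vistoli/Gabber corollary via \cref{theorem:convolutionAlgebra}, and split it into blocks using the central idempotents coming from the \( \mu_N \)-isotypic orthogonal decomposition of \cref{corollary:orthogonal decomposition}, checking (3) and (4) at the generic point. The one step to state carefully is the injectivity \( \Br ( \cX_c ) \hookrightarrow \Br \bfk ( \eta ) \) used for item (4): for stacks this is not a consequence of smoothness and integrality alone, but it does hold here because \( \cX_c \) is regular of dimension \(2\) with trivial generic stabilizer (e.g.\ by the reflexive-hull argument), so your argument goes through.
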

As a consequence of this result, given \(\cA_1\) as above, we can associate
\(
   \cAbar
   \coloneqq
   \prod _{ i } \cA _{ i }
\)
which enjoys the geometric properties of being Morita equivalent
to a stack. In particular, we have the following structures for
\(\module \cAbar\):
\begin{itemize}
    \item a symmetric monoidal structure
    \item tangent and cotangent sheaves
    \item Picard group
\end{itemize}
Since  $\cA_{i,\eta}$ and $\cA_{1,\eta}^{\otimes i}$ are Morita equivalent at the generic point, 
it would be interesting to describe a direct relation between them.  
Next, if we start with a stacky surface and finite flat scheme cover, we may form the convolution algebra by~\cref{theorem:convolutionAlgebra}, and then construct the associated stack by~\cref{theorem:main}. The resulting stack $\cXbar$ is described below.  In particular, we note that it would be interesting to determine its dependence on the choice of the finite flat scheme cover.  One
advantage of the stack $\cXbar$ is that it has abelian generic stabilizers.
We define \(\cHH^0(\cX)\) to be the sheaf of algebras over the coarse moduli space given by Hochschild cohomology locally.
\begin{corollary}\label{corollary:combination2}
    Let \( \cX \) be a smooth tame algebraic stack of dimension at most \(2\) which admits a finite flat scheme cover
    \(
        R \rightrightarrows U \to \cX
    \).
    Then the convolution algebra \(\cO_R^\vee\) is a tame order over \(\cHH ^0(\cX)\)
    with global dimension equal to \(\dim\cX \).  Furthermore, 
    there is a stack \(\cXbar\) such that
    \(
        \cXbar = \coprod \cX_i
    \)
    where \(\cX_i\) are
    \(
       \mu _{ n _{ i } }
    \)-gerbes over smooth tame stacks with trivial generic stabilizer
    over components of
    \(\cSpec \cHH ^0(\cX)\).
    
    Furthermore,  \(\qcoh \cX \) is a component of \(\qcoh \cXbar\) as monoidal categories.
    If \( \cX \) has trivial generic stabilizer, then \(\cX \simeq \cXbar\).
\end{corollary}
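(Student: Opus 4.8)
The plan is to combine three inputs: \cref{theorem:convolutionAlgebra} and \cref{corollary:order} to control the convolution algebra $\cO_R^\vee$; \cref{theorem:main} and its dimension $\le 1$ analogues, applied to the local pieces of $\cO_R^\vee$, to produce $\cXbar$; and the bottom-up reconstruction~\cite[Theorem~1]{MR3719470} to identify $\cX$ with $\cXbar$ when the generic stabilizer is trivial. First I would analyze $\cO_R^\vee$. Replacing $U$ by its normalization if necessary — still finite and flat over the regular stack $\cX$, since a normal scheme of dimension $\le 2$ is Cohen--Macaulay and miracle flatness applies — we may assume $U$ is normal. Then \cref{theorem:convolutionAlgebra} gives an equivalence of symmetric monoidal categories $\module\cO_R^\vee\simeq\coh\cX$, so by Morita invariance of Hochschild cohomology (the degree-$0$ part being the centre of the category) $\cHH^0(\cX)=\zentrum(\cO_R^\vee)$; by \cref{corollary:order} this algebra makes $\cO_R^\vee$ a tame order, and — via \cref{proposition:HKR in degree 0} and \cref{remark:HKR} — it is isomorphic, as a sheaf of $\cO_X$-algebras, to $\cO_I$ where $I$ is the union of the maximal-dimensional components of the coarse space of $I\cX$. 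For the global dimension: in characteristic $0$ the stack $\cX$ is tame Deligne--Mumford, so \'etale-locally $\cX\simeq[V/H]$ with $V$ smooth of dimension $d=\dim\cX$ and $H$ finite, whence $\coh\cX$ is locally $\coh^HV$, of global dimension $d$ since taking $H$-invariants is exact; hence $\gldim\cO_R^\vee=d=\dim\cX$.

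Next I would build $\cXbar$. Write $\cSpec\cHH^0(\cX)=\coprod_iZ_i$ as its decomposition into connected components; each $Z_i$ is normal (the centre of a tame order is normal) and, since every component of $I$ has dimension $d$, of dimension $d\le 2$. Put $\cA_i:=\cO_R^\vee|_{Z_i}$, so $\cO_R^\vee=\prod_i\cA_i$; then $\cA_i$ is a tame order over $Z_i$ with $\zentrum(\cA_i)=\cO_{Z_i}$ and, from $\gldim\cO_R^\vee=d$ and the structure theory of tame orders, $\gldim\cA_i=d$, and because $\characteristic\bfk=0$ (so $\characteristic\bfk\nmid\rank\cA_i$) the sheaf $\cO_{Z_i}$ is locally a direct summand of $\cA_i$. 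Thus $(Z_i,\cA_i)$ meets the hypotheses of \cref{theorem:main} for $d=2$, of~\cite[Theorem~7.7]{MR2018958} for $d=1$, and of \cref{corollary:structure theorem in dimension 0} for $d=0$; applying the appropriate statement yields a $\mu_{n_i}$-gerbe $\cX_i$ over a smooth tame stack with trivial generic stabilizer, with coarse space $Z_i$ and $\module\cA_i\simeq\coh^{(1)}\cX_i$. Set $\cXbar:=\coprod_i\cX_i$. Then $\qcoh\cXbar=\prod_i\qcoh\cX_i$, which by \cref{corollary:orthogonal decomposition} decomposes orthogonally into twisted parts, and its $1$-twisted part $\prod_i\qcoh^{(1)}\cX_i\simeq\prod_i\module\cA_i=\module\cO_R^\vee$ is an additive component of $\qcoh\cXbar$ which, by the first paragraph, is equivalent to $\qcoh\cX$ as a symmetric monoidal category. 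This establishes items (1)--(3).

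Finally, suppose $\cX$ has trivial generic stabilizer. Then the identity section is the unique maximal-dimensional component of $I\cX$, so $I=X$ and $\cHH^0(\cX)=\cO_X$; hence the $Z_i$ above are the connected components $X_j$ of $X$. Let $\cY_j$ denote the component of $\cX$ with coarse space $X_j$ and $\cX_j$ the corresponding summand of $\cXbar$. Since $\cY_j$ is generically a scheme, the equivalence $\module\cA_j\simeq\coh\cY_j$ forces $\cA_j\otimes\bfk(X_j)$ to be a matrix algebra over $\bfk(X_j)$, so the ``furthermore'' clause of \cref{theorem:main} (and its lower-dimensional analogues) makes the gerbe trivial: $\cX_j$ is a smooth tame Deligne--Mumford stack with trivial generic stabilizer, coarse space $X_j$, and $\coh\cX_j\simeq\module\cA_j\simeq\coh\cY_j$. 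Both $\cX_j$ and $\cY_j$ being smooth tame Deligne--Mumford stacks with trivial generic stabilizer and coarse space $X_j$, by~\cite[Theorem~1]{MR3719470} each is the canonical stack over the root stack over the canonical stack of $X_j$ along its branch divisor. Localizing the equivalence $\coh\cY_j\simeq\module\cA_j$ at a prime divisor $\frakp\subset X_j$ matches the order of the root stack $\cY_j$ at $\frakp$ with the ramification index of the hereditary order $\cA_j\otimes\cO_{X_j,\frakp}$, so $\cY_j$ and $\cX_j$ have the same branch divisor and hence $\cY_j\simeq\cX_j$; the disjoint union over $j$ gives $\cX\simeq\cXbar$.

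I expect this last step to be the main obstacle. One must verify that the equivalence $\coh\cX\simeq\module\cO_R^\vee$ genuinely translates the $e$-th root stack structure of $\cX\to X$ along a branch component into the standard coefficient $1-1/e$ of the $\bQ$-divisor attached to $\cO_R^\vee$ — in effect re-running, in codimension one, the analysis of \cref{section:root stack} — so that $\cX$ and $\cXbar$ carry the same bottom-up data; then the uniqueness in~\cite[Theorem~1]{MR3719470} concludes. The remaining assertions are routine bookkeeping around the cited results.
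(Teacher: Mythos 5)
Your handling of the first two claims tracks the paper's proof: the paper likewise gets tameness of \( \cO_R^\vee \) from \cref{corollary:order}, reads off the global dimension from the equivalence \( \qcoh \cX \simeq \module \cO_R^\vee \), and builds \( \cXbar \) by ``applying the main construction to each component of \( \cO_R^\vee \)'' exactly as in your second paragraph (your normalization remark and the explicit case split in dimensions \(0,1,2\) are extra care the paper does not spell out). Where you genuinely diverge is the final claim \( \cX \simeq \cXbar \): the paper does not compare branch divisors at all, but simply observes that \( \qcoh \cXbar \simeq \qcoh \cX \) \emph{as monoidal categories} and invokes the Tannakian reconstruction theorem for stacks (\cite[Remark~5.12]{Lurie}), a one-line finish. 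Your route instead uses the bottom-up uniqueness theorem \cite[Theorem~1]{MR3719470}: both \( \cX_j \) and \( \cY_j \) are smooth tame Deligne--Mumford (here you are implicitly using \( \characteristic \bfk = 0 \), the standing hypothesis of this section, so that tame algebraic stacks are DM) with trivial generic stabilizer and coarse space \( X_j \), so it suffices to match branch divisors, which you propose to do by comparing root indices with ramification indices of the hereditary localizations of \( \cA_j \) in codimension one. This is a workable alternative, but note that the codimension-one matching you flag as ``the main obstacle'' is a real outstanding obligation --- essentially the curve-level correspondence of \cite{MR2018958} together with the codimension-one statements of \cref{section:root stack} --- and is nowhere needed in the paper's argument. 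The trade-off: your route is more geometric and sidesteps the question of whether the composite equivalence \( \qcoh \cXbar \simeq \qcoh \cX \) is monoidal (the equivalences of \cref{theorem:equivalence of cA and cAr,theorem:equivalence of cAr and cAc} are a priori only \( \bfk \)-linear, so the paper's appeal to Tannaka duality quietly presupposes this compatibility), while the paper's route avoids any branch-locus analysis and does not need the DM/trivial-stabilizer bottom-up theory at all.
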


\begin{proof}
    For the first statement, we have \(\cX = [U/R]\) and we may take \(U \to \cX \) as our finite cover with convolution algebra \(\cO_R^\vee.\)
    This is a tame  order by~\cref{corollary:order}.  We have an equivalence of categories
    \(\qcoh \cX \simeq \module \cO_R^\vee\).  So the global dimension of \(\cO_R^\vee\)
    is the dimension of the coarse moduli space of \( \cX \).
    To build the stack \(\cXbar\) we apply our main construction to each component
    of \(\cO_R^\vee\).
 
    For the last statement, we know that \(\qcoh \cXbar \simeq \qcoh \cX \) as monoidal categories so the reconstruction theorem~\cite[Remark~5.12]{Lurie} for stacks applies to show the result.
\end{proof}

\begin{example}
    Let \(\cX = BG\) for some linearly reductive group scheme \( G \).
    Then the scheme cover
    \(
        \Spec \bfk \to \cX
    \)
    gives us the convolution algebra \(\bfk G\).
    Since \( G \) is linearly reductive we have
    \(
        \bfk G
        \simeq
        \prod _{ i }
        D_i ^{ n_i \times n_i }
    \)
    for some division algebras \(D_i\) with order
    \(
        \ell_i
    \)
    in
    \(
       \Br
       \zentrum
       \left(
        D _{ i }
       \right)
    \).
    On the other hand we have
    \(
        \prod
        _{
            \Irrep
            G
        }
        \bfk
        \simeq
        \zentrum
        \left(
            \prod _{ i }
            D _{ i }
        \right)
        \simeq
        \zentrum
        \left(
            \bfk G
        \right)
        \simeq
        \HH^0(\cX)
    \),
    where, as we saw in~\cref{proposition:BGcentre}, the ring structure of the last term is obtained as the \( \bfk \)-dual of the comultiplication of
    \(
       \cO _{
        I \cX
       }
    \)
    corresponding to the group structure of
    \(
       I \cX
    \)
    over
    \(
       \cX
    \).
    We attach the trivial \(\mu_{\ell_i}\)-gerbe
    \(
       \cX _{ i }
       \coloneqq
       B \mu _{ \ell _{ i } }
    \)
    over the corresponding component of
    \(
        \Spec
        \HH^0(\cX)
    \)
    to each \(D_i\) with
    \(
        \module D_i \simeq \cX_i^{(1)}
    \).
    Then
    \(
        \cXbar = \coprod _{ i } \cX_i
    \).
    Note that \(\cXbar\) has only abelian stabilizer subgroups whereas \( G \) may be non-abelian.
\end{example}

\printbibliography
\end{document}